\documentclass{amsart}

\usepackage[margin=1.35in]{geometry}
\usepackage{amsmath,amssymb}
\usepackage[colorlinks=true,linkcolor=blue,citecolor=blue,urlcolor=blue]{hyperref}
\usepackage{graphicx}
\usepackage{subcaption}
\usepackage{float}
\usepackage[english]{babel}

\newtheorem{theorem}{Theorem}[section]
\newtheorem{prop}[theorem]{Proposition}
\newtheorem{lem}[theorem]{Lemma}
\newtheorem{cor}[theorem]{Corollary}

\theoremstyle{definition}
\newtheorem{defn}[theorem]{Definition}
\newtheorem{ass}[theorem]{Assumption}
\newtheorem{statement}{Statement}
\newtheorem{algorithm}{Algorithm}

\theoremstyle{remark}

\newtheorem{remark}[theorem]{Remark}

\numberwithin{equation}{section}

\newcommand{\be}{\begin{equation}}
\newcommand{\ee}{\end{equation}}

\newcommand{\lb}{\left(}
\newcommand{\rb}{\right)}
\newcommand{\lsb}{\left[}
\newcommand{\rsb}{\right]}
\newcommand{\lcb}{\left\{}
\newcommand{\rcb}{\right\}}

\newcommand{\ip}[1]{\langle#1\rangle}
\newcommand{\norm}[1]{\left\lVert#1\right\rVert}

\newcommand{\wh}{\widehat}

\newcommand{\ve}{\varepsilon}

\renewcommand{\P}{\mathbb{P}}
\newcommand{\E}{\mathbb{E}}

\newcommand{\X}{X}
\newcommand{\Y}{Y}
\newcommand{\Z}{Z}
\renewcommand{\L}{L}
\newcommand{\bm}{{W}}

\newcommand{\x}{f}
\newcommand{\y}{g}
\newcommand{\z}{h}
\newcommand{\push}{\ell}

\newcommand{\phib}{\mathcal{J}}
\newcommand{\etab}{\mathcal{K}}
\newcommand{\psib}{\mathcal{H}}

\newcommand{\F}{\mathcal{F}}

\newcommand{\N}{\mathbb{N}}
\newcommand{\R}{\mathbb{R}}
\newcommand{\U}{\mathcal{N}}

\newcommand{\ellindex}{l}

\newcommand{\Id}{I}

\newcommand{\allN}{\mathcal{I}}
\newcommand{\lip}{\kappa}

\newcommand{\spaan}{\text{span}}
\newcommand{\conv}{{\text{cone}}}
\newcommand{\osc}{\text{Osc}}

\newcommand{\cts}{\mathbb{C}}

\newcommand{\dr}{\mathbb{D}}
\newcommand{\simple}{\mathbb{D}_\text{s}}

\newcommand{\sphere}{\mathbb{S}^{J-1}}

\newcommand{\sm}{\Gamma}
\newcommand{\esm}{\bar{\Gamma}}
\newcommand{\dm}{\Lambda}

\newcommand{\proj}{\mathcal{L}}
\newcommand{\hyper}{\mathbb{H}}

\begin{document}

\title[Sensitivity analysis of reflected diffusions]{A Monte Carlo method for estimating sensitivities of reflected diffusions in convex polyhedral domains}

\date{\today}

\author[David Lipshutz]{David Lipshutz*}
\address{Department of Electrical Engineering \\ 
                Technion --- Israel Institute of Technology \\ 
                Haifa, Israel}
\email{lipshutz@ee.technion.ac.il}

\author[Kavita Ramanan]{Kavita Ramanan\textsuperscript{$\dagger$}}
\address{Division of Applied Mathematics \\ 
	Brown University \\ 
	Providence, USA}
\email{kavita{\_}ramanan@brown.edu}

\keywords{reflected diffusion, Monte Carlo method, sensitivity analysis, infinitesimal perturbation analysis, pathwise differentiability, derivative process, derivative map, boundary jitter property, rank-based interacting diffusions, Atlas model}

\subjclass[2010]{Primary: 65C05, 65C30, 90C31. Secondary: 60G17, 60H10}

\thanks{*The first author was supported in part by NSF grants DMS-1148284 and CMMI-1407504.}
\thanks{\textsuperscript{$\dagger$}The second author was supported in part by NSF grant CMMI-1407504.}

\dedicatory{Technion --- Israel Institute of Technology and Brown University}

\begin{abstract}
In this work we develop an effective Monte Carlo method for estimating sensitivities, or gradients of expectations of sufficiently smooth functionals, of a reflected diffusion in a convex polyhedral domain with respect to its defining  parameters --- namely, its initial condition, drift and diffusion coefficients, and directions of reflection. Our method, which falls into the class of infinitesimal perturbation analysis (IPA) methods, uses a probabilistic representation for such sensitivities as the expectation of a functional of the reflected diffusion and its associated derivative process. The latter process is the unique solution to a constrained linear stochastic differential equation with jumps whose coefficients, domain and directions of reflection are modulated by the reflected diffusion. We propose an asymptotically unbiased estimator for such sensitivities using an Euler approximation of the reflected diffusion and its associated derivative process. Proving that the Euler approximation converges is challenging because the derivative process jumps whenever the reflected diffusion hits the boundary (of the domain). A key step in the proof is establishing a continuity property of the related derivative map, which is of independent interest. We compare the performance of our IPA estimator to a standard likelihood ratio estimator (whenever the latter is applicable), and provide numerical evidence that the variance of the former is substantially smaller than that of the latter. We illustrate our method with an example of a rank-based interacting diffusion model of equity markets. Interestingly, we show that estimating certain sensitivities of the rank-based interacting diffusion model using our method for a reflected Brownian motion description of the model outperforms a finite difference method for a stochastic differential equation description of the model.
\end{abstract}

\maketitle

\tableofcontents

\section{Introduction}\label{sec:intro}

\subsection{Overview}

Reflected diffusions in convex polyhedral domains arise in numerous applications. For instance, they arise in the study of rank-based interacting diffusion models in mathematical finance \cite{Banner2005,Ichiba2011} and as diffusion approximations in queueing theory \cite{Chen1991,Mandelbaum1998,Peterson1991,Ramanan2003,Ramanan2008a,Reiman1984}. For use in uncertainty qualification, stochastic optimization and other areas (see \cite[Chapter VII]{Asmussen2007} for a list of the numerous applications), it is of interest to estimate the gradient of the expectation of a functional of a reflected diffusion with respect to its defining parameters --- namely, its initial condition, drift and diffusion coefficients, and directions of reflection along the boundary of its domain. (Henceforth, we simply write ``sensitivities'' to mean ``gradients of expectations of functionals''.) The main contribution of this work is to develop a broadly applicable asymptotically unbiased estimator of a large class of sensitivities of reflected diffusions, which can be used to approximate sensitivities of reflected diffusions via a Monte Carlo method.

Our estimator is based on a probabilistic representation for sensitivities of reflected diffusions that was obtained in \cite[Corollary 3.15]{Lipshutz2017}. This representation expresses the sensitivity as the expectation of a functional of a reflected diffusion and its associated derivative process. The derivative process satisfies a constrained linear stochastic differential equations with jumps whose coefficients, domain and directions of reflection are modulated by the reflected diffusion, and it was shown in \cite[Theorem 3.13]{Lipshutz2017} that the pathwise derivatives of a reflected diffusion can be described via the derivative process. While this representation provides an unbiased estimator for sensitivities of a reflected diffusion, computation of sensitivities via this representation would entail simulation of the reflected diffusion and its associated derivative process, which typically involves discrete-time approximations. There is a large literature devoted to approximating reflected diffusions in convex polyhedral domains (see, e.g., \cite{Blanchet2014,Gobet2004,Gobet2000,Liu1993,Pettersson1995,Pettersson1997,Slominski1994,Slominski2001}); however, there is no method for approximating the derivative process. In this work we propose an Euler scheme for the approximation of a reflected diffusion and its derivative process, and prove that the associated estimators of sensitivities of the reflected diffusion are asymptotically unbiased, as the discretization parameter goes to zero. The proof that the Euler scheme for the reflected diffusion is asymptotically unbiased follows an argument analogous to the one used in the proof of \cite[Theorem 3.2]{Slominski2001}, which established the result in the case of normal reflection. The proof that the Euler scheme for the derivative process is asymptotically unbiased is much more challenging because the derivative process jumps whenever the reflected diffusion hits the boundary of the polyhedral domain. Establishing convergence of the approximation is quite subtle and relies on a continuity property of a certain map, called the derivative map. This continuity property is of independent interest; for example, it is used in \cite{Lipshutz2017b} to prove that a reflected Brownian motion (RBM) in a convex polyhedral cone and its derivatives process are jointly Feller continuous.

Our method, which relies on pathwise derivatives of a reflected diffusion, falls into the class of infinitesimal perturbation analysis (IPA) methods used in sensitivity analysis (see, e.g., \cite[Chapter VII.2]{Asmussen2007}). We compare the performance of our IPA method to a standard likelihood ratio (LR) method and provide numerical evidence that the variance of the former is substantially smaller than that of the latter. It is also worth mentioning here that the LR method only applies to perturbations of the drift and in many applications it is of interest to study perturbations with respect to all of the parameters (e.g., when estimating sensitivities of diffusion approximations of queueing networks, which we plan to investigate in future work). We also apply our method to study a particular rank-based interacting diffusion model called the {\em Atlas model}, originally proposed by Fernholz \cite[Example 5.3.3]{Fernholz2002} to model equity markets, and subsequently generalized by Banner, Fernholz and Karatzas \cite{Banner2005} and Ichiba et.\ al.\ \cite{Ichiba2011}. This model is described by a stochastic differential equation (SDE) with discontinuous drift coefficients, and its sensitivities can be estimated using a standard finite difference (FD) method (although this method remains biased as the time-discretization vanishes). On the other hand, the dynamics of this model can also be expressed in terms of an RBM in a convex polyhedral domain (see, e.g., \cite{Fernholz2002} and Section \ref{subs-Atlas} below). We estimate sensitivities by applying our method to this {RBM} representation, and provide numerical evidence to show that it performs much better than the FD method applied to the SDE description of the model. 

In summary, the main contributions of this work are as follows:
\begin{itemize}
	\item An IPA method for estimating sensitivities of a reflected diffusion (Algorithm \ref{alg:ipa}).
	\item Proof of convergence of Euler schemes for a reflected diffusion (Theorem \ref{thm:EulerRD}) and its associated derivative process (Theorem \ref{thm:approx}).
	\item Comparison of our method with the LR method, with numerical evidence that our method performs better, especially {over long time horizons} (Section \ref{sec:comparison}).
	\item Application of our method to estimate certain sensitivities of the Atlas model, and numerical evidence that our method applied to the RBM representation of the model performs better than the FD method applied to an SDE description of the model (Section \ref{subs-Atlas}).
	\item A continuity property of the derivative map (Theorem \ref{thm:dmcontinuous}).
\end{itemize}

\subsection{Outline}

The remainder of this paper is organized as follows. In Section \ref{sec:setup} we give precise definitions of a reflected diffusion and its associated derivative process, and we recall the probabilistic representation of sensitivities of reflected diffusions from \cite{Lipshutz2017}.  In Section \ref{sec:main} we define an Euler approximation for a reflected diffusion and its derivative process, state our main convergence result and describe a numerical algorithm for estimating sensitivities.  In Section \ref{sec:comparison} we compare our algorithm with an LR algorithm for gradient estimation in cases when the latter applies. In Section \ref{sec:examples} we present numerical results from applying our method to a one-dimensional RBM and to the Atlas model. In Section \ref{sec:spdp}, we define and state properties of the Skorokhod problem (SP) and the derivative problem, which are used to characterize a reflected diffusion and its derivative process, respectively. These are used in the proofs of our main results, which are presented in Sections \ref{sec:EulerRDproof}--\ref{sec:dmcontinuous}.

\subsection{Notation}

Let $\N\doteq\{1,2,\dots\}$ denote the set of positive integers, $\N_0\doteq\N\cup\{0\}$ and $\N_\infty\doteq\N\cup\{\infty\}$. Given $J\in\N$, we use $\R_+^J$ to denote the closed nonnegative orthant in $J$-dimensional Euclidean space $\R^J$. When $J=1$, we suppress $J$ and write $\R$ for $(-\infty,\infty)$ and $\R_+$ for $[0,\infty)$. {For $r,s\in\R$ let $\lfloor r\rfloor=\max\{k\in\N_0:k\leq r\}$, $r\wedge s=\min(r,s)$ and $r\vee s=\max(r,s)$.} For a subset $A\subset\R$, let $\inf A$ and $\sup A$ denote the infimum and supremum, respectively, of $A$, with the convention that the infimum and supremum of the empty set are respectively defined to be $\infty$ and $-\infty$. For a column vector $x\in\R^J$, let $x^j$ denote the $j$th component of $x$, for $j=1,\dots,J$, and let $|x|$ denote the usual Euclidean norm of $x$. We let $\{e_1,\dots,e_J\}$ denote the standard normal basis in $\R^J$, where $e_j$ is the column vector in $\R^J$ whose $j$th component is one and whose other components are zero, for $j=1,\dots,J$. {We let $\sphere=\{x\in\R^J:|x|=1\}$ denote the unit sphere in $\R^J$}. For $J,K\in\N$, let $\R^{J\times K}$ denote the set of real-valued matrices with $J$ rows and $K$ columns. We write $M^T\in\R^{K\times J}$ for the transpose of a matrix $M\in\R^{J\times K}$. Let $\Id_J$ denote the $J\times J$ identity matrix. Let $\norm{\cdot}$ denote the operator norm on $\R^{J\times K}$.

Given $E\subseteq\R^J$ we let $\conv(E)$ denote the convex cone generated by $E$; that is,
	$$\conv(E)\doteq\lcb\sum_{k=1}^Kr_kx_k,K\in\N,x_k\in E,r_k\in\R_+\rcb,$$
and let $\spaan(E)$ denote the set of all possible finite linear combinations of vectors in $E$, with the convention that $\conv(\emptyset)$ and $\spaan(\emptyset)$ are equal to $\{0\}$. Given $E\subseteq\R^J$ or $E\subseteq\R^{J\times K}$ let $\dr(E)$ denote the set of functions mapping $[0,\infty)$ to $E$ that are right-continuous with finite left limits (RCLL). Let $\cts(E)$ denote the subset of continuous functions in $\dr(E)$. For a subset $A\subseteq E$, let $\dr_A(E)\doteq\{f\in\dr(E):f(0)\in A\}$ and $\cts_A(E)\doteq\{f\in\cts(E):f(0)\in A\}$. We endow $\dr(E)$ and its subsets with the $J_1$-topology and recall that the $J_1$-topology relativized to $\cts(E)$ coincides with the topology of uniform convergence on compact subsets of $[0,\infty)$. Given $f\in\dr(E)$ and $t>0$, we let $f(t-)$ denote the left limit of $f(\cdot)$ at $t$. {Given a function $f:(0,\infty)\mapsto\R$, we say that $f(\ve)=o(\ve)$ as $\ve\downarrow0$ if $|f(\ve)|/\ve\to0$ as $\ve\downarrow0$, and we say that $f(\ve)=O(\ve)$ as $\ve\downarrow0$ if $|f(\ve)|/\ve$ is bounded as $\ve\downarrow0$.}

Throughout this paper we fix a filtered probability space $(\Omega,\F,\{\F_t\},\P)$ satisfying the usual conditions; that is, $(\Omega,\F,\P)$ is a complete probability space, $\F_0$ contains all $\P$-null sets in $\F$ and the filtration $\{\F_t\}$ is right-continuous. We write $\E$ to denote expectation under $\P$. We abbreviate ``almost surely'' as ``a.s.'' By a $K$-dimensional $\{\F_t\}$-Brownian motion $\bm$ on $(\Omega,\F,\P)$, we mean that $(\bm^1,\dots,\bm^K)$ are independent and  for each $k=1,\dots,K$, $\{\bm^k(t),\F_t, t \geq 0\}$ is a continuous martingale with $\bm^k(0) = 0$ and quadratic variation $[\bm^k]_t=t$ for $t\geq0$.  We let $C_p<\infty$, for $p\geq 2$, denote the universal constants in the Burkholder-Davis-Gundy (BDG) inequalities (see, e.g., \cite[Chapter IV, Theorem 42.1]{Rogers2000a}). 

\section{Background on reflected diffusions and their sensitivities}\label{sec:setup}

\subsection{A parameterized family of reflected diffusions}\label{sec:reflecteddiffusions}

Let $G$ be a closed polyhedron in $\R^J$ equal to the intersection of finitely many closed half spaces in $\R^J$; that is,
\be\label{eq:G}G\doteq\bigcap_{i=1,\dots,N}\lcb x\in\R^J:\ip{x,n_i}\geq c_i\rcb,\ee
for a positive integer $N\in\N$, unit vectors $n_i\in \sphere$ and constants $c_i\in\R$, for $i=1,\dots,N$. Let $\allN=\{1,\dots,N\}$. For  $i\in\allN$, we let $F_i\doteq\{x\in\partial G:\ip{x,n_i}=c_i\}$ denote the $i$th face of $G$. For $x\in G$, we write $\label{eq:allNx}\allN(x)\doteq\{i\in\allN:x\in F_i\}$ to denote the (possibly empty) set of indices associated with the faces that intersect at $x$. 

Let $M\in\N$ and $U$ be an open \emph{parameter set} in $\R^M$. For each $i\in\allN$, fix a continuously differentiable function $d_i:U\mapsto\R^J$ that satisfies $\ip{d_i(\alpha),n_i}=1$ for all $\alpha\in U$. For $\alpha\in U$, $d_i(\alpha)$ will denote the (constant) direction of reflection along the face $F_i$ associated with the parameter $\alpha$. Since the direction of reflection $d_i(\alpha)$ can be renormalized, our assumption $\ip{d_i(\alpha),n_i}=1$ for all $\alpha\in U$ is without loss of generality. We let 
	$$R:U\mapsto\R^{J\times N}$$
denote the continuous differentiable function defined by $R(\alpha)\doteq\begin{pmatrix}d_1(\alpha)&\cdots&d_N(\alpha)\end{pmatrix}$ for $\alpha\in U$, and let $R'(\alpha)$ denote the Jacobian of $R(\cdot)$ at $\alpha\in U$. We refer to $R(\alpha)$ as the reflection matrix associated with $\alpha$. Fix continuously differentiable functions 
\begin{align*}
	x_0:U\mapsto G,\qquad b:U\times G\mapsto\R^J,\qquad \sigma:U\times G\mapsto\R^{J\times K}.
\end{align*}
For $\alpha\in U$, $x_0(\alpha)$, $b(\alpha,\cdot)$ and $\sigma(\alpha,\cdot)$ will respectively be the initial condition, drift and dispersion coefficients for the reflected diffusion associated with $\alpha$. We refer to $a(\alpha,\cdot)\doteq\sigma(\alpha,\cdot)\sigma^T(\alpha,\cdot)$ as the diffusion coefficient associated with $\alpha\in U$. For $\alpha\in U$ and $x\in G$, we let $x_0'(\alpha)$ denote the Jacobian of $x_0(\cdot)$ at $\alpha\in U$, $b_\alpha(\alpha,x)$ denote the Jacobian of $b(\cdot,x)$ at $\alpha\in U$, $b_x(\alpha,x)$ denote the Jacobian of $b(\alpha,\cdot)$ at $x\in G$, and similarly define $\sigma_\alpha(\alpha,x)$ and $\sigma_x(\alpha,x)$.

\begin{defn}\label{def:rd}
Given $\alpha\in U$ and a $K$-dimensional $\{\F_t\}$-Brownian motion $\bm$ on $(\Omega,\F,\P)$, a reflected diffusion associated with $\alpha$ and driving Brownian motion $\bm$ is a $J$-dimensional continuous $\{\F_t\}$-adapted process $\Z^\alpha=\{\Z^\alpha(t),t\geq0\}$ such that a.s.\ for all $t\geq0$, $\Z^\alpha(t)\in G$ and $\Z^\alpha$ satisfies
	\be\label{eq:Z}\Z^\alpha(t)=x_0(\alpha)+\int_0^tb(\alpha,\Z^\alpha(s))ds+\int_0^t\sigma(\alpha,\Z^\alpha(s))d\bm(s)+R(\alpha)\L^\alpha(t),\qquad t\geq0,\ee
where $\L^\alpha=\{\L^\alpha(t),t\geq0\}$ is an $N$-dimensional continuous $\{\F_t\}$-adapted process such that a.s.\ $\L^\alpha(0)=0$ and for every $i\in\allN$, the $i$th component of $\L^\alpha$, denoted $\L^{\alpha,i}$, is nondecreasing and can only increase when $\Z^\alpha$ lies in face $F_i$; that is,
	$$\int_0^\infty1_{\{\Z^\alpha(s)\not\in F_i\}}d\L^{\alpha,i}(s)=0.$$  
We say that \emph{pathwise uniqueness} holds if given $\alpha\in U$ and a $K$-dimensional $\{\F_t\}$-Brownian motion $\bm$ on $(\Omega,\F,\P)$, any two reflected diffusions associated with $\alpha$ and driving Brownian motion $\bm$ are indistinguishable.
\end{defn}

\begin{remark}\label{rmk:initial}
In \cite[Definition 2.1]{Lipshutz2017} the authors define a family of reflected diffusions in which the drift and dispersion coefficients and directions of reflection are parameterized by $\alpha\in U$, but  the initial condition is parameterized by $x\in G$. In \cite{Lipshutz2017}, this allowed for a characterization of pathwise derivatives of flows of reflected diffusions and was a convenient representation in the proofs. In contrast, here we will find it more convenient to assume that the initial condition is a continuously differentiable function $x_0(\cdot)$ on $U$ taking values in $G$.
\end{remark}

\begin{remark}\label{rmk:Y}
Given $\alpha\in U$ and a reflected diffusion $\Z^\alpha$ with $\L^\alpha$ satisfying the conditions in Definition \ref{def:rd}, define the $J$-dimensional $\{\F_t\}$-adapted constraining process $\Y^\alpha=\{\Y^\alpha(t),t\geq0\}$ by $\Y^\alpha\doteq R(\alpha)\L^\alpha$. It follows from the definition of $R(\alpha)$ and the conditions on $\L^\alpha$ in Definition \ref{def:rd} that $\Y^\alpha$ satisfies, for all $0\leq s<t<\infty$,
	\be\label{eq:Y}\Y^\alpha(t)-\Y^\alpha(s)\in\conv\lsb\cup_{u\in(s,t]}d(\alpha,\Z^\alpha(u))\rsb,\ee
where
	\be\label{eq:dalphax}d(\alpha,x)\doteq\conv(\{d_i(\alpha),i\in\allN(x)\}),\qquad \alpha\in U,\; x\in G.\ee 
In particular, we see that $\Z^\alpha$ satisfies \cite[Definition 2.1]{Lipshutz2017} for a reflected diffusion (with initial condition $x=x_0(\alpha)$ there). The more general condition \eqref{eq:Y} allows for reflected diffusions that are not semimartingles. In this work we impose a mild linear independence condition on the directions of reflection (see Assumption \ref{ass:independent} below) under which $\Z^\alpha$ satisfies Definition \ref{def:rd} if and only if $\Z^\alpha$ satisfies \cite[Definition 2.1]{Lipshutz2017}.
\end{remark}

Let $\zeta_1,\zeta_2$ lie in $C_b^1(G)$, the space of real-valued functions on $G$ that are continuously differentiable with bounded first partial derivatives. Suppose that for each $\alpha\in U$ there exists a unique reflected diffusion $\Z^\alpha$ associated with $\alpha$. Then for $t\geq0$, define $\Theta_t(\cdot)$ to be the mapping from $U$ to $\R$ defined by
	\be\label{eq:Falpha}\Theta_t(\alpha)\doteq\E\lsb\int_0^t\zeta_1(\Z^\alpha(s))ds+\zeta_2(\Z^\alpha(t))\rsb,\qquad\alpha\in U.\ee
In the next two sections we state our main assumptions, introduce the derivative process along $\Z^\alpha$ and characterize the Jacobian of $\Theta_t(\cdot)$ in terms of $\Z^\alpha$ and the derivative process along $\Z^\alpha$.

\subsection{Main assumptions}\label{sec:assumptions}

\emph{The assumptions stated in this section are assumed to hold, without restatement, throughout this work.}

The first three assumptions on the data $\{(d_i(\cdot),n_i,c_i),i\in\allN\}$ ensure the associated SP is well defined and the associated Skorokhod map (SM) is Lipschitz continuous (see Proposition \ref{prop:sp} below), which is useful for proving strong existence of reflected diffusions and establishing pathwise uniqueness.

\begin{ass}\label{ass:independent}
	For each $\alpha\in U$ and $x\in\partial G$, $\{d_i(\alpha),i\in\allN(x)\}$ is a set of linearly independent vectors.
\end{ass}

Given a convex set $B$, let $\nu_B(z)\doteq\{\nu\in\sphere:\ip{\nu,y-z}\geq0\;\forall\;y\in B\}$ denote the set of inward normal vectors to the set $B$ at $z\in\partial B$.

\begin{ass}\label{ass:setB}
For each $\alpha\in U$ there exists $\delta(\alpha)>0$ and a compact, convex, symmetric set $B^\alpha$ in $\R^J$ with $0\in(B^\alpha)^\circ$ such that for $i\in\allN$,
	\be\label{eq:setB}\lcb\begin{array}{l}z\in\partial B^\alpha\\|\ip{z,n_i}|<\delta(\alpha)\end{array}\rcb\qquad\Rightarrow\qquad\ip{\nu,d_i(\alpha)}=0\qquad\text{for all }\;\nu\in\nu_{B^\alpha}(z).\ee
\end{ass}

Recall the definition of $d(\cdot,\cdot)$ given in \eqref{eq:dalphax}.

\begin{ass}\label{ass:projection}
For each $\alpha\in U$ there is a map $\pi^\alpha:\R^J\mapsto G$ satisfying $\pi^\alpha(x)=x$ for all $x\in G$ and $\pi^\alpha(x)-x\in d(\alpha,\pi^\alpha(x))$ for all $x\not\in G$.
\end{ass}

\begin{remark}\label{rmk:pix}
Under Assumption \ref{ass:setB}, there can be at most one map $\pi^\alpha:\R^J\mapsto G$ that satisfies the conditions stated in Assumption \ref{ass:projection} (see, e.g., the paragraph before \cite[Lemma 4.3]{Dupuis1999a} on page 184).
\end{remark}

\begin{remark}
See \cite[Example 2.14]{Lipshutz2016} for a general set of algebraic conditions on $\{(d_i(\cdot),n_i,c_i),i\in\allN\}$ that imply Assumptions \ref{ass:independent}, \ref{ass:setB} and \ref{ass:projection} hold.
\end{remark}

Along with the last three assumptions, the final two assumptions on the coefficients $b(\cdot,\cdot)$, $\sigma(\cdot,\cdot)$ and $R(\cdot)$ ensure existence and pathwise uniqueness of the reflected diffusion and the derivative process, as well as the characterization of sensitivities of reflected diffusions in terms of the derivative process (see Theorem \ref{thm:DF} below).

\begin{ass}\label{ass:coefficients}
There exists $\lip_1<\infty$ such that for all $\alpha\in U$ and $x\in G$,
	$$\max\lcb\norm{b_\alpha(\alpha,x)},\norm{b_x(\alpha,x)},\norm{\sigma_\alpha(\alpha,x)},\norm{\sigma_x(\alpha,x)},\norm{R'(\alpha)}\rcb\leq\lip_1.$$
In addition, there exist $\lip_2<\infty$ and $\gamma\in(0,1]$ such that for all $\alpha,\beta\in U$ and $x,y\in G$,
\begin{align*}
	\max\lcb
	\begin{array}{l l}
	\norm{b_\alpha(\alpha,x)-b_\alpha(\beta,y)},&\norm{b_x(\alpha,x)-b_x(\beta,y)},\\
	\norm{\sigma_\alpha(\alpha,x)-\sigma_\alpha(\beta,y)},&\norm{\sigma_x(\alpha,x)-\sigma_x(\beta,y)},\\
	\norm{R'(\alpha)-R'(\beta)}
	\end{array}
	\rcb
	\leq\lip_2|(\alpha,x)-(\beta,y)|^\gamma.
\end{align*}
\end{ass}

\begin{ass}\label{ass:elliptic}
For each $\alpha\in U$ there exists $c(\alpha)>0$ such that for all $x\in G$,
	\be y^Ta(\alpha,x)y\geq c(\alpha)|y|^2,\qquad y\in\R^J.\ee
\end{ass}

\subsection{Sensitivities of reflected diffusions in terms of the derivative process}\label{sec:derivativeprocess}

We first define the derivative process, which was introduced in \cite[Definition 3.5]{Lipshutz2017} to characterize pathwise derivatives of a reflected diffusion. Given $x\in G$, define
	\be\label{eq:Hx}\hyper_x\doteq\bigcap_{i\in\allN(x)}\lcb y\in\R^J:\ip{y,n_i}=0\rcb.\ee
Recall the definition of $d(\cdot,\cdot)$ in \eqref{eq:dalphax}.

\begin{defn}\label{def:de}
Let $\alpha\in U$, $\bm$ be a $K$-dimensional $\{\F_t\}$-Brownian motion on $(\Omega,\F,\P)$ and $\Z^\alpha$ be a reflected diffusion associated with $\alpha$ and driving Brownian motion $\bm$. A derivative process along $\Z^\alpha$ is an RCLL $\{\F_t\}$-adapted process $\phib^\alpha=\{\phib^\alpha(t),t\geq0\}$ taking values in $\R^{J\times M}$ such that a.s.\ for all $t\geq0$, $\phib_m^\alpha(t)\in \hyper_{\Z^\alpha(t)}$ for $m=1,\dots,M$ and $\phib^\alpha$ satisfies
\begin{align}\label{eq:phib}
	\phib^\alpha(t)&=x_0'(\alpha)+\int_0^tb_\alpha(\alpha,\Z^\alpha(s))ds+\int_0^tb_x(\alpha,\Z^\alpha(s))\phib^\alpha(s) ds\\ \notag
	&\qquad+\int_0^t\sigma_\alpha(\alpha,\Z^\alpha(s))d\bm(s)+\int_0^t\sigma_x(\alpha,\Z^\alpha(s))\phib^\alpha(s) d\bm(s)\\ \notag
	&\qquad+R'(\alpha)\L^\alpha(t)+\etab^\alpha(t),
\end{align}
where $\etab^\alpha=\{\etab^\alpha(t),t\geq0\}$ is an RCLL $\{\F_t\}$-adapted process taking values in $\R^{J\times M}$ such that a.s.\ $\etab^\alpha(0)=0$ and for each $m=1,\dots,M$ and all $0\leq s<t<\infty$,
\be\label{eq:etab}\etab_m^\alpha(t)-\etab_m^\alpha(s)\in\spaan\lsb\cup_{u\in(s,t]}d(\alpha,\Z^\alpha(u))\rsb.\ee
We say that \emph{pathwise uniqueness} holds if for each $\alpha\in U$, $K$-dimensional $\{\F_t\}$-Brownian motion $\bm$ and reflected diffusion $\Z^\alpha$ associated with $\alpha\in U$ with driving Brownian motion $\bm$, any two derivative processes along $\Z^\alpha$ are indistinguishable.
\end{defn}

The equation \ref{eq:phib} for the derivative process can be viewed as a  linearized version of the 
equation \eqref{eq:Z} for the reflected diffusion $Z^\alpha$, with the key feature, established in \cite{Lipshutz2017}, that $R'(\alpha)\L^\alpha + \etab^\alpha$ serves as the appropriate linearization of the constraining process $R(\alpha) L^\alpha$.

\begin{remark}\label{rmk:de}
Definition \ref{def:de} for the derivative process is slightly different from the definition given in \cite[Definition 3.5]{Lipshutz2017} due to the fact that the initial condition here is a function of $\alpha\in U$. To clarify the relation, suppose $\phib^\alpha$ satisfies Definition \ref{def:de} and $\phib^{\alpha,x}$ satisfies \cite[Definition 3.5]{Lipshutz2017}. Then, for $m=1,\dots,M$, the $m$th column vector of $\phib^\alpha$ satisfies $\phib_m^\alpha(t)=\phib_t^{\alpha,x_0(\alpha)}[e_m,x_0'(\alpha)e_m]$ for all $t\geq0$ (where the right-hand side of the equality is written in the notation of \cite{Lipshutz2017}).
\end{remark}

In the following theorem we state the probabilistic representation of sensitivities of reflected diffusions that was obtained in \cite{Lipshutz2017} and serves as the starting point for the method for sensitivity estimation that we introduce in this work. Recall that the assumptions stated in Section \ref{sec:assumptions} hold.

\begin{theorem}
\label{thm:DF}
For each $\alpha\in U$ and $K$-dimensional $\{\F_t\}$-Brownian motion $W$ on $(\Omega,\F,\P)$, the following hold:
\begin{itemize}
	\item[(i)] There exists a pathwise unique reflected diffusion $\Z^\alpha$ associated with $\alpha$ and driving Brownian motion $W$, and it is a strong Markov process.
	\item[(ii)] There exists a pathwise unique derivative process $\phib^\alpha$ along $\Z^\alpha$.
	\item[(iii)] Given $t\geq0$, a.s.\ $\phib^\alpha(\cdot)$ is continuous at $t$, and a.s.\ $\phib^\alpha(\cdot)$ is continuous at almost every $s\in[0,t)$.
	\item[(iv)] Given $t\geq0$, the function $\Theta_t(\cdot)$ defined in \eqref{eq:Falpha} is differentiable at $\alpha$ and its Jacobian satisfies
	\begin{align}\label{eq:F'}
	\Theta_t'(\alpha)=\E\lsb\int_0^t\zeta_1'(\Z^\alpha(s))\phib^\alpha(s) ds+\zeta_2'(\Z^\alpha(t))\phib^\alpha(t)\rsb.
	\end{align}
\end{itemize}
\end{theorem}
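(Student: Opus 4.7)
The plan is to reduce each of the four claims to the corresponding statement in \cite{Lipshutz2017}, with the only new bookkeeping arising from our convention (see Remark \ref{rmk:initial}) that the initial condition $x_0(\cdot)$ is itself a function of $\alpha$.

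For part (i), strong existence and pathwise uniqueness of $\Z^\alpha$ follow from a standard Picard iteration applied through the Skorokhod map. Assumptions \ref{ass:independent}, \ref{ass:setB} and \ref{ass:projection} ensure that the associated SP is well posed and the associated SM is Lipschitz on $\dr_G(\R^J)$ (as will be recorded in the proposition cited in Section \ref{sec:spdp}). Combined with the Lipschitz regularity of $b(\alpha,\cdot)$ and $\sigma(\alpha,\cdot)$ from Assumption \ref{ass:coefficients}, equation \eqref{eq:Z} becomes a fixed-point problem in $C([0,T];G)$ to which the usual contraction argument based on BDG and Gronwall applies. The strong Markov property then follows from pathwise uniqueness via a standard argument exploiting the continuous dependence of the SM on its driving path.

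For parts (ii) and (iii), the existence, pathwise uniqueness, and continuity properties of $\phib^\alpha$ are contained in \cite[Theorem 3.13]{Lipshutz2017}, which treats the case of a fixed initial condition $x\in G$. Via Remark \ref{rmk:de}, for each $\alpha$ and each standard basis vector $e_m\in\R^M$, the $m$th column of $\phib^\alpha$ equals $\phib_\cdot^{\alpha,x_0(\alpha)}[e_m,x_0'(\alpha)e_m]$ in the notation of \cite{Lipshutz2017}. Since $x_0'(\alpha)e_m\in\R^J$ is an admissible ``initial perturbation'' for the derivative process there, and the construction is linear in that input, existence, pathwise uniqueness, and the continuity statement in (iii) transfer directly from \cite[Theorem 3.13]{Lipshutz2017}.

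For part (iv), the identity \eqref{eq:F'} follows by combining the chain rule with \cite[Corollary 3.15]{Lipshutz2017}. Introduce the auxiliary function $\wt\Theta_t(\alpha,x)\doteq\E[\int_0^t\zeta_1(\Z^{\alpha,x}(s))ds+\zeta_2(\Z^{\alpha,x}(t))]$, where $\Z^{\alpha,x}$ denotes the reflected diffusion associated with $\alpha$ and initial condition $x\in G$. Then \cite[Corollary 3.15]{Lipshutz2017} asserts joint differentiability of $\wt\Theta_t$ with Jacobians in either slot expressed through $\phib^{\alpha,x}$. Since $\Theta_t(\alpha)=\wt\Theta_t(\alpha,x_0(\alpha))$ and $x_0(\cdot)$ is continuously differentiable, the chain rule together with the identification in Remark \ref{rmk:de} yields \eqref{eq:F'}. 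The interchange of differentiation and expectation used in the background result is justified by the boundedness of $\zeta_1',\zeta_2'$ (since $\zeta_i\in C_b^1(G)$) together with uniform-in-$\alpha$ moment bounds on $\phib^\alpha$ that follow from Assumption \ref{ass:coefficients}, BDG, Gronwall, and Lipschitz continuity of the SM applied to \eqref{eq:phib}. The main obstacle is the bookkeeping required to translate between the $(\alpha,x)$-indexed framework of \cite{Lipshutz2017} and the $\alpha$-only framework used here, and to apply the chain rule through $x_0(\alpha)$ rigorously despite the jump term $\etab^\alpha$ in \eqref{eq:phib}; once the identification in Remark \ref{rmk:de} is in place, each of (i)--(iv) becomes a direct consequence of the corresponding statement in \cite{Lipshutz2017}.
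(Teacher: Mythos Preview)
Your approach is essentially the same as the paper's: reduce each item to the corresponding result in \cite{Lipshutz2017} via the translations in Remarks \ref{rmk:Y} and \ref{rmk:de}, and apply the chain rule through $x_0(\alpha)$ for (iv). The only differences are that for (i) you sketch the Picard/SM argument rather than simply citing \cite[Proposition 2.16]{Lipshutz2017}, and your specific numbering within \cite{Lipshutz2017} is shifted (the paper invokes Corollary 3.15 for (ii)--(iii) and Corollary 3.16 for (iv), whereas you cite Theorem 3.13 and Corollary 3.15 respectively); these are cosmetic and do not affect the substance of the argument.
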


\begin{proof}[Proof of Theorem \ref{thm:DF}]
Part (i) follows from \cite[Proposition 2.16]{Lipshutz2017} (see also, \cite[Theorem 4.3]{Ramanan2006}) and the equivalence between solutions of Definition \ref{def:rd} and \cite[Definition 2.1]{Lipshutz2017} stated in Remark \ref{rmk:Y}. Parts (ii) and (iii) follow from \cite[Corollary 3.15]{Lipshutz2017} and Remark \ref{rmk:de}. Part (iv) follows from \cite[Corollary 3.16]{Lipshutz2017} and the chain rule.
\end{proof}

While \eqref{eq:F'} provides an unbiased estimator for $\Theta'(\alpha)$, exact sampling of functionals of $(\Z^\alpha,\phib^\alpha)$ is complicated by the discontinuous dynamics when $\Z^\alpha$ reaches the boundary $\partial G$. As is often the case when simulating diffusion processes, we sample from a discrete-time Euler approximation of $(\Z^\alpha,\phib^\alpha)$. Our main result (see Corollary \ref{cor:approx} below) states that the Euler approximation can be used to construct an asymptotically unbiased estimator for $\Theta_t'(\alpha)$.

\section{Main results}\label{sec:main}

Recall that the assumptions stated in Section \ref{sec:assumptions} hold. Fix $\alpha\in U$ and a $K$-dimensional $\{\F_t\}$-Brownian motion $\bm$ on $(\Omega,\F,\P)$. Let $\Z^\alpha$ denote the pathwise unique reflected diffusion associated with $\alpha$ and driving Brownian motion $\bm$, and let $\phib^\alpha$ denote the pathwise unique derivative process along $\Z^\alpha$. Throughout this section, given a time step $\Delta>0$, define the sequence $\{t_n^\Delta\}_{n\in\N_0}$ by 
\begin{equation}
\label{eq:tnDelta}
t_n^\Delta\doteq n\Delta,\qquad n\in\N_0, 
\end{equation}
and let $\{\delta_n^\Delta\bm\}_{n\in\N}$ be the sequence of i.i.d.\ $K$-dimensional Gaussian random variables with mean zero and diagonal covariance matrix $\Delta I_K$ given by
	\be\label{eq:deltanDelta}\delta_n^\Delta\bm\doteq\bm(t_n^\Delta)-\bm(t_{n-1}^\Delta),\qquad n\in\N.\ee
In addition, let $\{\F_t^\Delta\}$ denote the discrete filtration defined by 
	\be\label{eq:FtDelta}\F_t^\Delta\doteq\sigma\lb\lcb\delta_k^\Delta\bm,k=1,\dots,n-1\rcb\rb,\qquad t\in[t_{n-1}^\Delta,t_n^\Delta),\qquad n\in\N.\ee

\subsection{Euler scheme for the reflected diffusion}\label{sec:EulerRD}

In this section we present an Euler scheme for approximating just the reflected diffusion $\Z^\alpha$. This is an extension of a result obtained in \cite{Slominski2001} to allow for reflected diffusions with oblique reflection. We also prove a convergence result for an Euler approximation of the process $\L^\alpha$ that will be needed in the next section. Let $\pi^\alpha$ denote the unique mapping satisfying the conditions in Assumption \ref{ass:projection}. In order to define the Euler scheme, we need the following lemma.

\begin{lem}\label{lem:projdecomp}
There exists a unique map $\xi^\alpha:\R^J\mapsto\R_+^N$ such that for each $x\in\R^J$,  $\pi^\alpha(x)-x=R(\alpha)\xi^\alpha(x)$ and the $i$th component of $\xi^\alpha(x)$ satisfies $\xi^{\alpha,i}(x)>0$ only if $\pi^\alpha(x)\in F_i$, for $i\in\allN$.
\end{lem}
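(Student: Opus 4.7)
The plan is to construct $\xi^\alpha(x)$ from the characterization of $\pi^\alpha(x)-x$ provided by Assumption \ref{ass:projection} and use Assumption \ref{ass:independent} to obtain uniqueness. The statement decomposes naturally into three cases determined by the location of $\pi^\alpha(x)$ (interior of $G$, or on the boundary), though they can be treated uniformly once the existence formula is written down.

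For existence, first note that for $x\in G$ we have $\pi^\alpha(x)=x$ by Assumption \ref{ass:projection}, so $\pi^\alpha(x)-x=0$ and we simply set $\xi^\alpha(x)\doteq 0$, which vacuously satisfies both requirements. For $x\notin G$, Assumption \ref{ass:projection} together with the definition \eqref{eq:dalphax} of $d(\alpha,\cdot)$ yields
\[
\pi^\alpha(x)-x\in d(\alpha,\pi^\alpha(x))=\conv\bigl(\{d_i(\alpha),\,i\in\allN(\pi^\alpha(x))\}\bigr),
\]
so there exist nonnegative scalars $\{\xi^{\alpha,i}(x)\}_{i\in\allN(\pi^\alpha(x))}$ with
\[
\pi^\alpha(x)-x=\sum_{i\in\allN(\pi^\alpha(x))}\xi^{\alpha,i}(x)\,d_i(\alpha).
\]
Defining $\xi^{\alpha,i}(x)\doteq 0$ for $i\in\allN\setminus\allN(\pi^\alpha(x))$ produces a vector $\xi^\alpha(x)\in\R_+^N$ satisfying $\pi^\alpha(x)-x=R(\alpha)\xi^\alpha(x)$ (by definition of $R(\alpha)$ as the matrix with columns $d_i(\alpha)$) and the support condition that $\xi^{\alpha,i}(x)>0$ only if $i\in\allN(\pi^\alpha(x))$, i.e.\ only if $\pi^\alpha(x)\in F_i$.

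For uniqueness, suppose $\tilde\xi^\alpha(x)\in\R_+^N$ is another map satisfying the same two conditions. The support condition forces $\xi^{\alpha,i}(x)=\tilde\xi^{\alpha,i}(x)=0$ for every $i\notin\allN(\pi^\alpha(x))$, so subtracting the two representations of $\pi^\alpha(x)-x$ gives
\[
\sum_{i\in\allN(\pi^\alpha(x))}\bigl(\xi^{\alpha,i}(x)-\tilde\xi^{\alpha,i}(x)\bigr)\,d_i(\alpha)=0.
\]
If $\pi^\alpha(x)\in\partial G$, Assumption \ref{ass:independent} provides linear independence of $\{d_i(\alpha),\,i\in\allN(\pi^\alpha(x))\}$, forcing each coefficient in the sum to vanish; if $\pi^\alpha(x)$ lies in the interior of $G$, then $\allN(\pi^\alpha(x))=\emptyset$ and both vectors are already zero. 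In either case $\tilde\xi^\alpha(x)=\xi^\alpha(x)$.

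There is essentially no serious obstacle here: the lemma is a clean bookkeeping consequence of Assumptions \ref{ass:independent} and \ref{ass:projection}. The only point requiring a moment of care is that Assumption \ref{ass:projection} only describes $\pi^\alpha(x)-x$ when $x\notin G$; the trivial case $x\in G$ must be handled by inspection so that $\xi^\alpha$ is defined on all of $\R^J$.
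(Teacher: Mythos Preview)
Your proof is correct and follows essentially the same approach as the paper's: both use Assumption~\ref{ass:projection} to place $\pi^\alpha(x)-x$ in the cone $d(\alpha,\pi^\alpha(x))$, extract nonnegative coefficients from the definition of that cone, and then invoke the linear independence in Assumption~\ref{ass:independent} for uniqueness. Your version is simply more explicit, separating out the trivial case $x\in G$ and spelling out the uniqueness argument, whereas the paper compresses the whole thing into two sentences.
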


\begin{remark}
In the case of a simple polyhedral cone with vertex at the origin (i.e., $N=J$ and $c_i=0$ for $i\in\allN$), Assumption \ref{ass:coefficients} implies $R(\alpha)$ is invertible for each $\alpha\in U$ and so $\xi^\alpha(x)=(R(\alpha))^{-1}(\pi^\alpha(x)-x)$ for each $\alpha\in U$ and $x\in\R^J$.
\end{remark}

\begin{proof}
According to Assumption \ref{ass:projection} and Remark \ref{rmk:pix}, $\pi^\alpha:\R^J\mapsto G$ is the unique mapping that satisfies $\pi^\alpha(x)-x\in d(\alpha,\pi^\alpha(x))$. By the definition of $d(\cdot,\cdot)$ in \eqref{eq:dalphax} and the linear independence of the vectors $\{d_i(\alpha),i\in\allN(\pi^\alpha(x))\}$ guaranteed by Assumption \ref{ass:independent}, it follows that for each $x\in\R^J$ there is a unique vector $\xi^\alpha(x)\in\R_+^N$ satisfying the conditions of the lemma.
\end{proof}

For $\Delta>0$ let $(\Z^{\alpha,\Delta},\L^{\alpha,\Delta})$ denote the pair of piecewise constant RCLL $\{\F_t^\Delta\}$-adapted processes taking values in $G\times\R_+^N$ defined as follows: Set 
	\be\label{eq:ZLDelta0}(\Z^{\alpha,\Delta}(0),\L^{\alpha,\Delta}(0))\doteq (x_0(\alpha),0)\ee
and, for $n\in\N$, set 	
	\be\label{eq:ZLDeltat}(\Z^{\alpha,\Delta}(t),\L^{\alpha,\Delta}(t))\doteq(\Z^{\alpha,\Delta}(t_{n-1}^\Delta),\L^{\alpha,\Delta}(t_{n-1}^\Delta)),\qquad t\in[t_{n-1}^\Delta,t_n^\Delta),\ee
and recursively define
\begin{align}\label{eq:Znalphah}
	(\Z^{\alpha,\Delta}(t_n^\Delta),\L^{\alpha,\Delta}(t_n^\Delta)) \doteq (\pi^\alpha(\Xi_n^{\alpha, \Delta}),\L^{\alpha,\Delta}(t_{n-1}^\Delta)+\xi^\alpha(\Xi_n^{\alpha,\Delta})),
\end{align}
where $\Xi_n^{\alpha,\Delta}$ is the random $J$-dimensional vector defined by
	\be\label{eq:XinalphaDelta}\Xi_n^{\alpha, \Delta} \doteq \Z^{\alpha,\Delta}(t_{n-1}^\Delta)+b(\alpha,\Z^{\alpha,\Delta}(t_{n-1}^\Delta))\Delta
+\sigma(\alpha,\Z^{\alpha,\Delta}(t_{n-1}^\Delta))\delta_n^\Delta\bm.\ee
We have the following result on the convergence of the Euler approximations.

\begin{theorem}\label{thm:EulerRD}
For each $p\geq2$, as $\Delta\downarrow0$,
\begin{align}\label{eq:Zalphaorder}\E\lsb\sup_{0\leq s\leq t}|\Z^{\alpha,\Delta}(s)-\Z^\alpha(s)|^p\rsb&=O\lb\lb\Delta\log\frac{1}{\Delta}\rb^\frac{p}{2}\rb,\\ \label{eq:Lalphaorder}
	\E\lsb\sup_{0\leq s\leq t}|\L^{\alpha,\Delta}(s)-\L^\alpha(s)|^p\rsb&=O\lb\lb\Delta\log\frac{1}{\Delta}\rb^\frac{p}{2}\rb.
\end{align}
\end{theorem}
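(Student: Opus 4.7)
My approach is to recast the Euler scheme as a genuine Skorokhod problem (SP) solution driven by an RCLL input, so that the Lipschitz continuity of the Skorokhod map (SM) established in Proposition~\ref{prop:sp} can do the heavy lifting, and then to bound the gap between that input and the true diffusion input by standard Euler--Maruyama moment estimates together with the maximal inequality for Brownian increments.

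First I would introduce the piecewise-constant RCLL input
$$\check{X}^{\alpha,\Delta}(t)\doteq x_0(\alpha)+\sum_{k=1}^{\lfloor t/\Delta\rfloor}\lb b(\alpha,\Z^{\alpha,\Delta}(t_{k-1}^\Delta))\Delta+\sigma(\alpha,\Z^{\alpha,\Delta}(t_{k-1}^\Delta))\delta_k^\Delta\bm\rb.$$
Telescoping \eqref{eq:Znalphah}--\eqref{eq:XinalphaDelta} yields the identity $\Z^{\alpha,\Delta}(t)=\check{X}^{\alpha,\Delta}(t)+R(\alpha)\L^{\alpha,\Delta}(t)$ for every $t\geq 0$; moreover, Assumption~\ref{ass:projection} and Lemma~\ref{lem:projdecomp} guarantee that each increment of $\L^{\alpha,\Delta}$ charges only the faces active at the corresponding value of $\Z^{\alpha,\Delta}$. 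Thus $(\Z^{\alpha,\Delta},\L^{\alpha,\Delta})$ is \emph{exactly} the SP solution for the RCLL input $\check{X}^{\alpha,\Delta}$. Appealing to the RCLL extension of the SM Lipschitz bound from Proposition~\ref{prop:sp}, valid under Assumptions~\ref{ass:independent}--\ref{ass:projection} (in the spirit of the normal-reflection case of \cite{Slominski2001}), one obtains a constant $\lip_\text{SM}<\infty$ such that
$$\sup_{0\leq s\leq t}|\Z^{\alpha,\Delta}(s)-\Z^\alpha(s)|+\sup_{0\leq s\leq t}|\L^{\alpha,\Delta}(s)-\L^\alpha(s)|\leq \lip_\text{SM}\sup_{0\leq s\leq t}|\check{X}^{\alpha,\Delta}(s)-X^\alpha(s)|,$$
where $X^\alpha(t)\doteq x_0(\alpha)+\int_0^tb(\alpha,\Z^\alpha(s))ds+\int_0^t\sigma(\alpha,\Z^\alpha(s))d\bm(s)$ is the (continuous) input driving $\Z^\alpha$. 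This reduces both \eqref{eq:Zalphaorder} and \eqref{eq:Lalphaorder} to a single estimate on the input discrepancy.

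Next I would compare $\check{X}^{\alpha,\Delta}$ to $X^\alpha$ through the auxiliary continuous interpolation
$$\bar{X}^{\alpha,\Delta}(t)\doteq x_0(\alpha)+\int_0^tb(\alpha,\Z^{\alpha,\Delta}(\eta^\Delta(s)))ds+\int_0^t\sigma(\alpha,\Z^{\alpha,\Delta}(\eta^\Delta(s)))d\bm(s),\qquad \eta^\Delta(s)\doteq t_{n-1}^\Delta\text{ on }[t_{n-1}^\Delta,t_n^\Delta).$$
Since $\check{X}^{\alpha,\Delta}(t)=\bar{X}^{\alpha,\Delta}(t_{\lfloor t/\Delta\rfloor}^\Delta)$, a triangle inequality splits $|\check{X}^{\alpha,\Delta}-X^\alpha|$ into (a)~the within-step oscillation of $\bar{X}^{\alpha,\Delta}$ and (b)~the gap $|\bar{X}^{\alpha,\Delta}-X^\alpha|$. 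For (a), a drift contribution of order $\Delta$ plus a Brownian contribution gives the classical bound $\E\lsb\max_{n\leq t/\Delta}\sup_{s\in[t_{n-1}^\Delta,t_n^\Delta]}|\bm(s)-\bm(t_{n-1}^\Delta)|^p\rsb=O((\Delta\log(1/\Delta))^{p/2})$ via the maximal inequality for suprema of Gaussian variables. For (b), noting that $\Z^{\alpha,\Delta}$ is piecewise constant on each interval $[t_{n-1}^\Delta,t_n^\Delta)$ so that $\Z^{\alpha,\Delta}(\eta^\Delta(u))=\Z^{\alpha,\Delta}(u)$, the Lipschitz hypothesis on $b(\alpha,\cdot)$ and $\sigma(\alpha,\cdot)$ (Assumption~\ref{ass:coefficients}) together with the BDG inequality produces
$$\E\lsb\sup_{0\leq s\leq t}|\bar{X}^{\alpha,\Delta}(s)-X^\alpha(s)|^p\rsb\leq C\int_0^t\E\lsb\sup_{0\leq u\leq s}|\Z^{\alpha,\Delta}(u)-\Z^\alpha(u)|^p\rsb ds.$$
Substituting into the SM Lipschitz bound and applying Gronwall's inequality closes the loop and gives both rates.

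I expect the principal obstacle to be the very first step: verifying the extension of the SM Lipschitz property of Proposition~\ref{prop:sp} to RCLL inputs with suprema taken in the uniform norm. In the normal-reflection case of \cite{Slominski2001}, $\pi^\alpha$ is simply the Euclidean metric projection, which is an ordinary contraction, so the RCLL extension is routine. Here $\pi^\alpha$ is oblique and need not be Euclidean-nonexpansive; one must work in the norm induced by the set $B^\alpha$ provided by Assumption~\ref{ass:setB}, invoking arguments analogous to \cite[Lemma~4.3]{Dupuis1999a} to show that $\pi^\alpha$ is nonexpansive in $|\cdot|_{B^\alpha}$ and that piecewise-constant pushes do not inflate errors, and then tracking that the equivalence constants between $|\cdot|_{B^\alpha}$ and the Euclidean norm only affect the overall multiplicative constant in the bound, preserving the $(\Delta\log(1/\Delta))^{p/2}$ rate.
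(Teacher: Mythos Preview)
Your approach is essentially the same as the paper's: the paper likewise shows that $(\Z^{\alpha,\Delta},\L^{\alpha,\Delta})$ solves the SP for the piecewise-constant input (your $\check X^{\alpha,\Delta}$ is the paper's $X^{\alpha,\Delta}$ in Remark~\ref{rmk:XDelta}), reduces both \eqref{eq:Zalphaorder} and \eqref{eq:Lalphaorder} to an estimate on $\sup|X^{\alpha,\Delta}-X^\alpha|$ via Proposition~\ref{prop:sp}(i), and then closes with the same oscillation bound (Lemma~\ref{lem:oscbmzero}), BDG, Lipschitz coefficients, and Gronwall argument you describe.

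Your ``principal obstacle'' is not one: Proposition~\ref{prop:sp}(i) is already stated for inputs $\x_k\in\dr_G(\R^J)$, i.e.\ for RCLL paths, so no extension beyond what the paper provides is needed, and no separate contraction analysis of $\pi^\alpha$ in the $B^\alpha$-norm is required for this step.
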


The proof of Theorem \ref{thm:EulerRD} is given in Section \ref{sec:EulerRDproof}. The proof is a straightforward adaptation of the proof of \cite[Theorem 3.2(i)]{Slominski2001}, which proves \eqref{eq:Zalphaorder} in the case of normal reflection along the boundary. The main difference between the two results is that we allow for oblique reflection along the boundary and also prove convergence of the constraining process in \eqref{eq:Lalphaorder}.

\subsection{Euler scheme for the derivative process}\label{sec:EulerDP}

We now construct an Euler scheme for the derivative process.  We start with 
a lemma that  introduces a linear projection map that can be interpreted as a linearization of $\pi^\alpha$. Recall the definition of the linear subspace $\hyper_x$, for $x\in G$, given in \eqref{eq:Hx}.

\begin{lem}
\label{lem:projx}
For each $x\in G$, there is a unique map $\proj_x^\alpha:\R^J\mapsto \hyper_x$ that satisfies $\proj_x^\alpha(y)-y\in\ spaan[d(\alpha,x)]$ for all $y\in\R^J$. Furthermore, $\proj_x^\alpha$ is linear.
\end{lem}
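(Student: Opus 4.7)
The plan is to show that $\R^J = \hyper_x \oplus \spaan[d(\alpha,x)]$, from which $\proj_x^\alpha$ arises automatically as the (linear) projection onto $\hyper_x$ parallel to $\spaan[d(\alpha,x)]$.

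First I would dispose of the trivial case $x \in G^\circ$: here $\allN(x) = \emptyset$, so $\hyper_x = \R^J$ and $\spaan[d(\alpha,x)] = \{0\}$, and $\proj_x^\alpha = \Id_J$ is the unique map with the required properties. For $x \in \partial G$, I set $I = \allN(x)$ and let $N_I, D_I \in \R^{J \times |I|}$ be the matrices whose columns are $\{n_i\}_{i \in I}$ and $\{d_i(\alpha)\}_{i \in I}$, respectively. By Assumption \ref{ass:independent}, $D_I$ has rank $|I|$, so every element of $\spaan[d(\alpha,x)]$ is uniquely of the form $D_I\lambda$ for some $\lambda \in \R^{|I|}$. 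The two defining conditions on $\proj_x^\alpha(y)$ then translate into the linear system $A\lambda(y) = -N_I^T y$, where $A \doteq N_I^T D_I$, with $\proj_x^\alpha(y) = y + D_I\lambda(y)$. Once $A$ is shown invertible, I obtain the explicit closed form $\proj_x^\alpha = \Id_J - D_I A^{-1} N_I^T$, which is manifestly linear in $y$ and simultaneously delivers existence and uniqueness.

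To prove that $A$ is invertible, I would argue by contradiction: if $A\lambda = 0$ for some $\lambda \neq 0$, then $w \doteq D_I\lambda$ is a nonzero vector in $\spaan[d(\alpha,x)] \cap \hyper_x$. Using compactness and symmetry of $B^\alpha$ together with $0 \in (B^\alpha)^\circ$, I can scale $w$ so that $z \doteq cw \in \partial B^\alpha$ for some $c > 0$. Since $w \in \hyper_x$, we have $|\ip{z, n_i}| = 0 < \delta(\alpha)$ for every $i \in I$, so Assumption \ref{ass:setB} yields $\ip{\nu, d_i(\alpha)} = 0$ for every $\nu \in \nu_{B^\alpha}(z)$ and every $i \in I$, and hence $\ip{\nu, w} = 0$, i.e., $\ip{\nu, z} = 0$. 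On the other hand, because $0 \in (B^\alpha)^\circ$, some open ball around $0$ lies in $B^\alpha$; plugging its points into the defining inequality $\ip{\nu, y - z} \geq 0$ for $y \in B^\alpha$ forces $\ip{\nu, z} < 0$, a contradiction.

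The main obstacle is precisely this last step: producing the strict inequality $\ip{\nu, z} < 0$ and marrying it to Assumption \ref{ass:setB} to rule out any nontrivial element of $\hyper_x \cap \spaan[d(\alpha,x)]$. Once the direct sum decomposition is in hand, everything else is routine linear algebra, and linearity of $\proj_x^\alpha$ is immediate from the explicit formula.
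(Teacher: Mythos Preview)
Your argument is correct. The paper's own proof is a two-line citation: it invokes \cite[Lemma 8.2]{Lipshutz2016} to conclude that $\spaan(\hyper_x\cup d(\alpha,x))=\R^J$ for every $x\in G$, and then \cite[Lemma 8.3]{Lipshutz2016} to obtain the projection. Your route is the same mathematics unpacked: you establish the direct-sum decomposition $\R^J=\hyper_x\oplus\spaan[d(\alpha,x)]$ directly from Assumptions \ref{ass:independent} and \ref{ass:setB}, and then read off the projection. The contradiction step using the set $B^\alpha$ is exactly how the cited lemma is proved, so there is no genuinely new idea here, but your version has the virtue of being self-contained and of producing the explicit formula $\proj_x^\alpha=\Id_J-D_IA^{-1}N_I^T$, which the paper never writes down. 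One small point worth making explicit in your write-up: the existence of some $\nu\in\nu_{B^\alpha}(z)$ at a boundary point $z$ of the compact convex set $B^\alpha$ follows from the supporting-hyperplane theorem; you use this implicitly when you pick $\nu$ to derive the contradiction.
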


\begin{proof}
By Assumption \ref{ass:independent} and \cite[Lemma 8.2]{Lipshutz2016}, the set $\mathcal{W}^\alpha\doteq\{x\in G:\spaan(\hyper_x\cup d(\alpha,x))\neq\R^J\}$ is empty. The lemma then follows from \cite[Lemma 8.3]{Lipshutz2016}.
\end{proof}

\begin{remark}\label{rmk:projx}
For each $x\in G$, since $\proj_x^\alpha$ is a linear map from $\R^J$ to $\hyper_x\subseteq\R^J$, we can write $\proj_x^\alpha$ as a $J\times J$ matrix whose column vectors lie in $\hyper_x$. Throughout this work we view $\proj_x^\alpha$ as this $J\times J$ matrix and refer to $\proj_x^\alpha$ as the derivative projection matrix.
\end{remark}

For $\Delta>0$ recall the sequence $\{t_n^\Delta\}_{n\in\N_0}$, the random vectors $\delta_n^\Delta W$, $n\in\N$, the discrete filtration $\{\F_t^\Delta\}$ and the pair of processes $(\Z^{\alpha,\Delta},\L^{\alpha,\Delta})$ defined in the last section. Let $\phib^{\alpha,\Delta}=\{\phib^{\alpha,\Delta}(t),t\geq0\}$ denote the piecewise constant RCLL $\{\F_t^\Delta\}$-adapted process taking values in $\R^{J\times M}$ defined as follows: Set 
	\be\label{eq:phibDelta0}\phib^{\alpha,\Delta}(0)\doteq x_0'(\alpha)\ee
and, for $n\in\N$, set
	\be\label{eq:phibDeltat}\phib^{\alpha,\Delta}(t)\doteq\phib^{\alpha,\Delta}(t_{n-1}^\Delta),\qquad t\in[t_{n-1}^\Delta,t_n^\Delta),\ee
and recursively define
\begin{align}\label{eq:psibalphah}
	\phib^{\alpha,\Delta}(t_n^\Delta)&\doteq\proj_{\Z^{\alpha,\Delta}(t_n^\Delta)}^\alpha\mathcal{X}_n^{\alpha,\Delta},
\end{align}
where $\mathcal{X}_n^{\alpha,\Delta}$ is the random element taking values in $\R^{J\times M}$ defined by
\begin{align}\label{eq:mathcalXnDeltaalpha}
	\mathcal{X}_n^{\alpha,\Delta}&\doteq\phib^{\alpha,\Delta}(t_{n-1}^\Delta)+b_\alpha(\alpha,\Z^{\alpha,\Delta}(t_{n-1}^\Delta))\Delta+b_x(\alpha,\Z^{\alpha,\Delta}(t_{n-1}^\Delta))\phib^{\alpha,\Delta}(t_{n-1}^\Delta)\Delta\\ \notag
	&\qquad+\sigma_\alpha(\alpha,\Z^{\alpha,\Delta}(t_{n-1}^\Delta))\delta_n^\Delta\bm+\sigma_x(\alpha,\Z^{\alpha,\Delta}(t_{n-1}^\Delta))\phib^{\alpha,\Delta}(t_{n-1}^\Delta)\delta_n^\Delta\bm\\ \notag
	&\qquad+R'(\alpha)(\L^{\alpha,\Delta}(t_n^\Delta)-\L^{\alpha,\Delta}(t_{n-1}^\Delta)).
\end{align}

The following establishes convergence of the Euler scheme for the derivative process. 

\begin{theorem}\label{thm:approx}
A.s.\ $(\Z^{\alpha,\Delta},\phib_1^{\alpha,\Delta},\dots,\phib_m^{\alpha,\Delta})$ converges to $(\Z^\alpha,\phib_1^\alpha,\dots,\phib_m^\alpha)$ in $\cts(G)\times\dr(\R^J)\times\cdots\times\dr(\R^J)$ as $\Delta\downarrow0$. In addition, given any $t<\infty$, the family $\{\phib^{\alpha,\Delta}(s)\}_{s\in[0,t],\Delta>0}$ is a uniformly integrable family of random variables.
\end{theorem}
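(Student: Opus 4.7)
The strategy is to recast the Euler recursion \eqref{eq:psibalphah}--\eqref{eq:mathcalXnDeltaalpha} as a discrete instance of the derivative problem that will be developed in Section \ref{sec:spdp}, establish convergence of its input data to the corresponding continuous derivative problem for $\phib^\alpha$, and then transfer convergence from inputs to outputs using the continuity property of the derivative map (Theorem \ref{thm:dmcontinuous}).

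The first step is to upgrade Theorem \ref{thm:EulerRD} to an almost-sure statement. Along any sequence $\Delta_k\downarrow0$ satisfying $\sum_k(\Delta_k\log(1/\Delta_k))^{p/2}<\infty$ for some large $p$, a Borel--Cantelli argument applied to \eqref{eq:Zalphaorder}--\eqref{eq:Lalphaorder} yields a.s.\ uniform-on-compacts convergence of $(\Z^{\alpha,\Delta_k},\L^{\alpha,\Delta_k})$ to $(\Z^\alpha,\L^\alpha)$. Next, telescoping \eqref{eq:psibalphah} expresses $\phib^{\alpha,\Delta}$ as the output of a piecewise-constant discrete derivative map applied to an input process $\psib^{\alpha,\Delta}$ built from the initial condition $x_0'(\alpha)$, the Riemann-sum approximations of $\int_0^\cdot b_\alpha(\alpha,\Z^\alpha(s))ds$ and $\int_0^\cdot\sigma_\alpha(\alpha,\Z^\alpha(s))d\bm(s)$, and the term $R'(\alpha)\L^{\alpha,\Delta}$, with the linear feedback contributions $b_x\phib^{\alpha,\Delta}$ and $\sigma_x\phib^{\alpha,\Delta}$ absorbed via a discrete Gronwall step. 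Convergence of $\psib^{\alpha,\Delta_k}$ to the corresponding continuous input $\psib^\alpha$ in the $J_1$ topology then follows from the a.s.\ convergence of $(\Z^{\alpha,\Delta_k},\L^{\alpha,\Delta_k})$, the Lipschitz regularity in Assumption \ref{ass:coefficients}, BDG estimates for the stochastic integral approximation error, and continuity of $R'(\cdot)$.

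The principal obstacle is that $\phib^\alpha$ has jumps whenever $\Z^\alpha$ touches $\partial G$, and the active face sets $\allN(\Z^{\alpha,\Delta}(t_n^\Delta))$ need not agree with $\allN(\Z^\alpha(t))$ even when the paths are close --- the discrete projection $\proj^\alpha_{\Z^{\alpha,\Delta}(t_n^\Delta)}$ in \eqref{eq:psibalphah} may select a richer or sparser set of active reflection directions than the derivative map does along $\Z^\alpha$. This is precisely what Theorem \ref{thm:dmcontinuous} is designed to handle: once the input pair $(\Z^{\alpha,\Delta_k},\psib^{\alpha,\Delta_k})$ converges in $J_1$ to $(\Z^\alpha,\psib^\alpha)$ (with the appropriate compatibility at boundary visits), the derivative map outputs $\phib^{\alpha,\Delta_k}$ converge to $\phib^\alpha$ in $J_1$ as well. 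A standard subsequence-of-subsequence argument then upgrades this to convergence along arbitrary sequences $\Delta\downarrow0$. For the uniform integrability assertion, I would derive a uniform $L^p$ bound $\sup_{\Delta>0}\E[\sup_{s\leq t}|\phib^{\alpha,\Delta}(s)|^p]<\infty$ for some $p>1$ by combining uniform boundedness of the derivative projection matrices $\proj_x^\alpha$, $x\in G$ (which follows from Assumption \ref{ass:independent} and \cite[Lemma 8.3]{Lipshutz2016}), Assumption \ref{ass:coefficients}, the moment estimate \eqref{eq:Lalphaorder}, BDG, and a discrete Gronwall argument; uniform integrability is then immediate.
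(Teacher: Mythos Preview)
Your outline matches the paper's strategy at the top level: represent $\phib^{\alpha,\Delta}$ as $\dm_{\Z^{\alpha,\Delta}}(\psib^{\alpha,\Delta})$ for a discrete input $\psib^{\alpha,\Delta}$, prove $\psib^{\alpha,\Delta}\to\psib^\alpha$, then invoke Theorem \ref{thm:dmcontinuous}. The uniform-integrability argument via a uniform $L^p$ bound is exactly what the paper does (Lemma \ref{lem:suphphibalphahbound}), and your Borel--Cantelli and subsequence-of-subsequence remarks are a useful clarification of a passage from $L^p$ to a.s.\ convergence that the paper's proof leaves implicit.

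There is, however, a genuine gap in the sentence ``with the linear feedback contributions $b_x\phib^{\alpha,\Delta}$ and $\sigma_x\phib^{\alpha,\Delta}$ absorbed via a discrete Gronwall step.'' The input $\psib^{\alpha,\Delta}$ (defined in \eqref{eq:psibn}) carries $\int_0^\cdot b_x(\alpha,\Z^{\alpha,\Delta})\phib^{\alpha,\Delta}\,d\rho^\Delta$ and the analogous $\sigma_x$ term, so $\psib^{\alpha,\Delta}-\psib^\alpha$ contains $\phib^{\alpha,\Delta}-\phib^\alpha$ in the integrand. To close a Gronwall loop you must bound $|\phib^{\alpha,\Delta}(s)-\phib^\alpha(s)|$ back in terms of $\sup_{u\le s}|\psib^{\alpha,\Delta}(u)-\psib^\alpha(u)|$ plus a vanishing remainder. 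But $\phib^{\alpha,\Delta}=\dm_{\Z^{\alpha,\Delta}}(\psib^{\alpha,\Delta})$ while $\phib^\alpha=\dm_{\Z^\alpha}(\psib^\alpha)$: both the input \emph{and} the base path differ, and the Lipschitz estimate of Proposition \ref{prop:dmlip} only controls variation of the input for a \emph{fixed} base path. The paper handles this (see \eqref{eq:phibhphib}) via the decomposition
\[
|\phib_m^{\alpha,\Delta}(s)-\phib_m^\alpha(s)|\le \lip_\dm(\alpha)\sup_{u\le s}|\psib_m^{\alpha,\Delta}(u)-\psib_m^\alpha(u)|+\bigl|\dm_{\Z^{\alpha,\Delta}}(\psib_m^\alpha)(s)-\dm_{\Z^\alpha}(\psib_m^\alpha)(s)\bigr|,
\]
and then proves separately (Lemma \ref{lem:dmZalphaetabalpha}) that the second term vanishes in $L^p$ by a \emph{prior} application of Theorem \ref{thm:dmcontinuous} with the fixed input $\psib_m^\alpha$, combined with dominated convergence. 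In other words, the derivative-map continuity theorem must be invoked \emph{inside} the Gronwall argument (to kill the base-path mismatch), not only at the end; without that step the loop does not close and convergence of $\psib^{\alpha,\Delta}$ is not established. Your proposal should make this two-stage use of Theorem \ref{thm:dmcontinuous} explicit.
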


The proof of Theorem \ref{thm:approx}, which  is given in Section \ref{sec:approx}, relies on continuity properties of the related derivative map, which are established in Theorem \ref{thm:dmcontinuous}.

\begin{cor}\label{cor:approx}
For all $t\geq0$,
	\be\label{eq:F'alphaapprox} \Theta_t'(\alpha)=\lim_{\Delta\downarrow0}\E\lsb\int_0^t\zeta_1'(\Z^{\alpha,\Delta}(s))\phib^{\alpha,\Delta}(s)ds+\zeta_2'(\Z^{\alpha,\Delta}(t))\phib^{\alpha,\Delta}(t)\rsb.\ee
\end{cor}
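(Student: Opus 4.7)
The plan is to combine Theorem \ref{thm:DF}(iv), which already expresses
\[
\Theta_t'(\alpha)=\E\left[\int_0^t\zeta_1'(\Z^\alpha(s))\phib^\alpha(s)\,ds+\zeta_2'(\Z^\alpha(t))\phib^\alpha(t)\right],
\]
with the a.s.\ convergence and uniform integrability furnished by Theorem \ref{thm:approx}, and then to pass to the limit by standard uniform integrability arguments on the terminal term and the integral term separately. Throughout, I use that $\zeta_1,\zeta_2\in C_b^1(G)$, so their derivatives $\zeta_i'$ are continuous and uniformly bounded.

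For the terminal term, Theorem \ref{thm:approx} gives a.s.\ convergence of $(\Z^{\alpha,\Delta},\phib^{\alpha,\Delta})$ to $(\Z^\alpha,\phib^\alpha)$ in $\cts(G)\times\dr(\R^J)^M$. By Theorem \ref{thm:DF}(iii), $\phib^\alpha$ is a.s.\ continuous at the fixed time $t$, and the basic property of the $J_1$-topology that pointwise evaluation is continuous at continuity points of the limit then gives $\phib^{\alpha,\Delta}(t)\to\phib^\alpha(t)$ a.s.; similarly $\Z^{\alpha,\Delta}(t)\to\Z^\alpha(t)$ a.s. Continuity of $\zeta_2'$ yields a.s.\ convergence $\zeta_2'(\Z^{\alpha,\Delta}(t))\phib^{\alpha,\Delta}(t)\to\zeta_2'(\Z^\alpha(t))\phib^\alpha(t)$. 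Since $\zeta_2'$ is bounded and $\{\phib^{\alpha,\Delta}(t)\}_{\Delta>0}$ is uniformly integrable (Theorem \ref{thm:approx}), the product is uniformly integrable as well, and the a.s.\ convergence is upgraded to $L^1$ convergence, giving the desired limit of expectations.

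For the integral term, I would apply Fubini to write
\[
\E\left[\int_0^t\zeta_1'(\Z^{\alpha,\Delta}(s))\phib^{\alpha,\Delta}(s)\,ds\right]=\int_0^t\E[\zeta_1'(\Z^{\alpha,\Delta}(s))\phib^{\alpha,\Delta}(s)]\,ds.
\]
By Theorem \ref{thm:DF}(iii), $\phib^\alpha$ is a.s.\ continuous at a.e.\ $s\in[0,t)$, so the same argument as for the terminal term yields pointwise-in-$s$ convergence of the inner expectation for a.e.\ $s$. The uniform integrability of $\{\phib^{\alpha,\Delta}(s)\}_{s\in[0,t],\Delta>0}$ implies the uniform bound $\sup_{s\in[0,t],\Delta>0}\E|\phib^{\alpha,\Delta}(s)|<\infty$, and combined with $\|\zeta_1'\|_\infty<\infty$ this bounds the integrand $|\E[\zeta_1'(\Z^{\alpha,\Delta}(s))\phib^{\alpha,\Delta}(s)]|$ uniformly in $s$ and $\Delta$. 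Dominated convergence applied to the outer $s$-integral, followed by a second Fubini to reassemble the limit into the form on the right-hand side of Theorem \ref{thm:DF}(iv), completes the proof.

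The main obstacle is conceptual rather than technical: all of the real analytical work is concentrated in Theorem \ref{thm:approx}, and once it is in hand the corollary reduces to an essentially routine uniform-integrability-plus-dominated-convergence argument. The one subtlety that must be flagged is that $J_1$-convergence alone does not give pointwise convergence at discontinuity points of the limit, which is precisely why Theorem \ref{thm:DF}(iii), guaranteeing that the discontinuity set of $\phib^\alpha$ in $[0,t]$ has Lebesgue measure zero and does not include the endpoint $t$, is indispensable at both stages.
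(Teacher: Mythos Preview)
Your proposal is correct and follows essentially the same approach as the paper: both rely on Theorem \ref{thm:DF}(iii) to ensure $\phib^\alpha$ is a.s.\ continuous at $t$ and at a.e.\ $s\in(0,t)$, use the $J_1$-convergence from Theorem \ref{thm:approx} to deduce pointwise convergence at those times, and then invoke the uniform integrability of $\{\phib^{\alpha,\Delta}(s)\}_{s\in[0,t],\Delta>0}$ to pass to the limit in expectation. Your write-up simply fleshes out the details (splitting terminal and integral terms, applying Fubini and dominated convergence explicitly) that the paper leaves implicit in its one-line justification.
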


\begin{proof}
The corollary follows from Theorem \ref{thm:DF}(iv) and Theorem \ref{thm:approx}.
\end{proof}

Corollary \ref{cor:approx} suggests an asymptotically unbiased estimator for $\Theta_t'(\alpha)$. In the next section we describe an algorithm for estimating $\Theta_t'(\alpha)$ based on the Euler discretization. It is also of interest to investigate whether there is an exact sampling algorithm for the joint process $(\Z^\alpha,\phib^\alpha)$, which would avoid the bias introduced by the Euler discretization. Even in the setting of an RBM, where an exact sampling method has been proposed in \cite{Blanchet2014} for RBMs in the nonnegative orthant with reflection matrices that are so-called $\mathcal{M}$-matrices, it is a challenging problem to exactly sample the associated derivative process due to the fact that the derivative process jumps whenever the RBM reaches the boundary of the domain. We leave this as an interesting open problem for future work.

\subsection{Numerical algorithm}\label{sec:algorithm}

Fix $0<\Delta<t<\infty$. Since $\Delta$ is typically much smaller than $t$, for simplicity we can assume $N\doteq t/\Delta$ is a positive integer. Since $\Z^{\alpha,\Delta}$ and $\phib^{\alpha,\Delta}$ are constant on intervals of the form $[t_{n-1}^\Delta,t_n^\Delta)$, $n=1,\dots,N$, we have
\begin{align}\label{eq:whThetaIPA}
	&\int_0^t\zeta_1'(\Z^{\alpha,\Delta}(s))\phib^{\alpha,\Delta}(s)ds+\zeta_2'(\Z^{\alpha,\Delta}(t))\phib^{\alpha,\Delta}(t)\\ \notag
	&\qquad=\sum_{n=0}^{N-1}\zeta_1'(\Z^{\alpha,\Delta}(t_n^\Delta))\phib^{\alpha,\Delta}(t_n^\Delta)\Delta+\zeta_2'(\Z^{\alpha,\Delta}(t_N^\Delta))\phib^{\alpha,\Delta}(t_N^\Delta).
\end{align}

\begin{algorithm}
\label{alg:ipa}
Set $\Z^{\alpha,\Delta}(t_0^\Delta)\doteq x_0(\alpha)$ and $\phib^{\alpha,\Delta}(t_0^\Delta)\doteq x_0'(\alpha)$. For $n=1,\dots,N$, recursively complete the following steps:
\begin{itemize}
	\item[1.] Generate a $K$-dimensional Gaussian random variable $\delta_n^\Delta\bm$ with mean zero and diagonal covariance matrix $\Delta\Id_K$.
	\item[2.] Define $(\Z^{\alpha,\Delta}(t_n^\Delta),\L^{\alpha,\Delta}(t_n^\Delta))$ as in \eqref{eq:Znalphah}--\eqref{eq:XinalphaDelta}.
	\item[3.] Define $\phib^{\alpha,\Delta}(t_n^\Delta)$ as in \eqref{eq:psibalphah}--\eqref{eq:mathcalXnDeltaalpha}.
\end{itemize}
Set the (IPA) estimator $\wh{\Theta_t'(\alpha)}_{\text{IPA}}$ of $\Theta_t'(\alpha)$ equal to the right hand side of \eqref{eq:whThetaIPA}.
\end{algorithm}

In the next section we compare our algorithm with other methods for sensitivity analysis. In Section \ref{sec:examples} below, we illustrate our algorithm with an example of a one-dimensional RBM and an example of an RBM that arises in the study of a rank-based interacting diffusion model of equity markets.

\section{Comparison with existing methods}\label{sec:comparison}

The main existing (asymptotically unbiased) estimator for estimating sensitivities of reflected diffusions is the LR method. The LR method is applicable only when the law of the perturbed process is absolutely continuous with the law of the original process. In the context of multidimensional (reflected) diffusions, this only holds for perturbations to the drift. In this case, the LR method uses a change of measure argument to recast expectations of functionals of the perturbed process as the expectation, under the law of the original process, of the functional multiplied by the likelihood ratio or the Radon-Nikodym derivative. The LR method for sensitivities of diffusions with respect to drift was introduced in \cite{Yang1991} and the authors describe an extension of their method to reflected diffusions (see \cite[Section 9]{Yang1991}). Here we provide a brief summary of the method in the context of reflected diffusions in convex polyhedral domains and refer the reader to \cite{Yang1991} for the details.

Since we can only consider perturbations to the drift in this section, in addition to the assumptions stated in Section \ref{sec:assumptions}, we also assume that the initial condition, dispersion coefficient and directions of reflection are constant in $\alpha\in U$; that is, $x_0(\cdot)\equiv x_0$, $\sigma(\cdot,\cdot)\equiv \sigma(\cdot)$ and $R(\cdot)\equiv R$. Fix $t<\infty$ and $\alpha\in U$. For $m=1,\dots,M$, by a standard argument using the Lipschitz continuity of $b(\cdot,\cdot)$ and Girsanov's transformation (see, e.g., \cite[Chapter IV.38]{Rogers2000a}), there is a family of probability measures $\{\P_m^{\alpha,\ve}\}_{\ve>0}$ on the measurable space $(\Omega,\F_t)$ that are mutually absolutely continuous with respect to the underlying measure $\P$ with Radon-Nikodym derivative	
	\be\frac{d\P_m^{\alpha,\ve}}{d\P}\doteq\exp\lcb\int_0^t\ip{\mu_m^{\alpha,\ve}(\Z^\alpha(s)),d\bm(s)}-\frac{1}{2}\int_0^t|\mu_m^{\alpha,\ve}(\Z^\alpha(s))|^2ds\rcb,\qquad\ve>0,\ee
where $\mu_m^{\alpha,\ve}:G\mapsto\R^K$ is given by
	$$\mu_m^{\alpha,\ve}(x)\doteq\sigma^T(x)a^{-1}(x)(b(\alpha+\ve e_m,x)-b(\alpha,x)),\qquad x\in G,$$ 
such that $\{\Z^\alpha(s),0\leq s\leq t\}$ under $\P_m^{\alpha,\ve}$ is equal in distribution to $\{\Z^{\alpha+\ve e_m}(s),0\leq s\leq t\}$ under $\P$. Let $D^\alpha$ denote the $M$-dimensional random variable whose $m$th component, denoted $D^{\alpha,m}$ for $m=1,\dots,M$, is defined by
\begin{align*}
	D^{\alpha,m}&\doteq\lim_{\ve\to0}\frac{(d\P_m^{\alpha,\ve}/d\P)-1}{\ve}=\int_0^t\ip{\sigma^T(\Z^\alpha(s))a^{-1}(\Z^\alpha(s))b_{\alpha^m}(\alpha,\Z^\alpha(s)),d\bm(s)},
\end{align*}
where $b_{\alpha^m}(\alpha,x)$ denotes the partial derivative of $b(\cdot,x)$ at $\alpha$ with respect to the $m$th component of $\alpha$. According to \cite[Theorem 3.1]{Yang1991},
\begin{align*}
	\Theta_t'(\alpha)&=\lim_{\ve\to0}\E\lsb\lb\int_0^t\zeta_1(\Z^\alpha(s))ds+\zeta_2(\Z^\alpha(t))\rb\frac{(d\P_m^{\alpha,\ve}/d\P) -1}{\ve}\rsb\\
	&=\E\lsb\lb\int_0^t\zeta_1(\Z^\alpha(s))ds+\zeta_2(\Z^\alpha(t))\rb D^\alpha\rsb.
\end{align*}
As described in Section \ref{sec:EulerRD}, we have an Euler approximation $\Z^{\alpha,\Delta}$ for $\Z^\alpha$, which yields the approximation $D^{\alpha,\Delta,m}$ for $D^{\alpha,m}$, for $m=1,\dots,M$, given by
	\be\label{eq:DalphaDelta}D^{\alpha,\Delta,m}=\sum_{n=0}^{N-1}\ip{\sigma^T(\Z^{\alpha,\Delta}(t_n^\Delta))a^{-1}(\Z^{\alpha,\Delta}(t_n^\Delta))b_{\alpha^m}(\alpha,\Z^{\alpha,\Delta}(t_n^\Delta)),\delta_n^\Delta\bm}.\ee
According to \cite[Theorem 4.1]{Yang1991},
	\be\label{eq:ThetatlimLR}\Theta_t'(\alpha)=\lim_{\Delta\to0}\E\lsb\lb\int_0^t\zeta_1(\Z^{\alpha,\Delta}(s))ds+\zeta_2(\Z^{\alpha,\Delta}(t))\rb D^{\alpha,\Delta}\rsb.\ee
We now propose a numerical algorithm for estimating $\Theta_t'(\alpha)$ based on \eqref{eq:DalphaDelta} and \eqref{eq:ThetatlimLR}. Fix $\Delta\in(0,t)$ so that $N\doteq t/\Delta$ is a positive integer. Then we have 
\begin{align}\label{eq:whThetaLR}
	&\lb\int_0^t\zeta_1(\Z^{\alpha,\Delta}(s))ds+\zeta_2(\Z^{\alpha,\Delta}(t))\rb D^{\alpha,\Delta}=\lb\sum_{n=0}^{N-1}\zeta_1(\Z^{\alpha,\Delta}(t_n^\Delta))\Delta+\zeta_2(\Z^{\alpha,\Delta}(t_N^\Delta))\rb D^{\alpha,\Delta}.
\end{align}

\begin{algorithm}\label{alg:lr}
Set $\Z^{\alpha,\Delta}(t_0^\Delta)=x_0(\alpha)$. For $n=1,\dots,N$, recursively complete the following two steps:
\begin{itemize}
	\item[1.] Generate a $K$-dimensional Gaussian random variable $\delta_n^\Delta\bm$ with mean zero and diagonal covariance matrix $\Delta\Id_K$.
	\item[2.] Define $\Z^{\alpha,\Delta}(t_n^\Delta)$ as in \eqref{eq:Znalphah}.
\end{itemize}
Define $D^{\alpha,\Delta}$ as in \eqref{eq:DalphaDelta} and set the (LR) estimator $\wh{\Theta_t'(\alpha)}_{\text{LR}}$ of $\Theta_t'(\alpha)$ equal to the right-hand side of \eqref{eq:whThetaLR}.
\end{algorithm}

We briefly mention some of the advantages and drawbacks of the LR method.  When applicable, the main advantage of the LR method is that it applies to a broad class of functionals of reflected diffusions, requiring only that $\zeta_1$ and $\zeta_2$ be measurable (and integrable) without imposing additional  smoothness conditions that are  required for our IPA method (Algorithm \ref{alg:ipa}). In addition, since there are exact sampling methods for reflected diffusions (see \cite{Blanchet2014}), they can potentially be adopted to obtain an unbiased estimator of $\Theta_t'(\alpha)$.  On the other hand, as mentioned above, the main drawback of the LR method is that it is only applicable to perturbations of the drift. This is quite restrictive as it is often of interest to compute sensitivities with respect to parameters other than the drift {(e.g., in diffusion approximations of queueing networks, which we plan to study in future work)}. In \cite[Section 8]{Yang1991} the authors show that in the one-dimensional setting, the approach can be adapted to allow for perturbations of the diffusion coefficient, but those methods do not extend to higher dimensions. Furthermore, even when considering perturbations to the drift, numerical evidence suggests that the variance of an LR estimator performs worse than the our method, especially over large time intervals (see Figure \ref{fig:varIPALR} below). {This supports the observation made in \cite[Chapter VII.5]{Asmussen2007} that IPA estimators usually outperform LR estimators when both methods are applicable}.

\begin{remark}
\label{rem-fd}
It is worthwhile to compare our IPA algorithm with the class of FD methods, which can be used to estimate sensitivities.  These are based on approximating the derivative by a finite difference and include forward, backward and central difference approximations  (see, e.g., \cite[Chapter VII.1]{Asmussen2007}). For example, the forward difference estimator associated with fixed $\ve, \Delta > 0$ (and assuming once again that $N\doteq t/\Delta$ is a positive integer) for $\partial_m\Theta_t(\alpha)$, for $m=1,\dots,M$, is given by 
\begin{align}\label{eq:whpartialmTheta}
	&\wh{\partial_m\Theta_t(\alpha)}_{\text{FD}}\\ \notag
	&\qquad=\sum_{n=0}^{N-1}\frac{\zeta_1(\Z^{\alpha+\ve e_m,\Delta}(t_n^\Delta))-\zeta_1(\Z^{\alpha,\Delta}(t_n^\Delta))}{\ve}\Delta+\frac{\zeta_2(\Z^{\alpha+\ve e_m,\Delta}(t_N^\Delta))-\zeta_2(\Z^{\alpha,\Delta}(t_N^\Delta))}{\ve}, 
\end{align}
with $t_n^\Delta$ defined as in \eqref{eq:tnDelta}. The appeal of an FD method is that it is simple to implement and can be used to approximate sensitivities with respect to all of the parameters. The main drawback is that it introduces an extra source of bias from the derivative approximation. Our IPA method can be viewed as a limiting version of the FD method that eliminates the bias from the derivative approximation, without any increase in the computational complexity of implementation. Since there is no computational advantage to using the FD method and in general there do not appear to be any other advantages to using an FD method over an IPA method when both are applicable (see, e.g., the first paragraph at the the top of \cite[page 219]{Asmussen2007}), we do not provide a numerical comparison of our method and an FD method.
\end{remark}

\section{Examples}\label{sec:examples}

We illustrate { the value of} our method (Algorithm \ref{alg:ipa}) with two examples: a one-dimensional RBM and a three-dimensional RBM that arises in the study of rank-based interacting diffusion models of equity markets. We used the program R for all computations, which were performed on an Apple machine with a 2.26 GHz Intel Core 2 Duo processor. 

\subsection{One-dimensional RBM}\label{sec:1dRBM}

Let $J=1$, $U\doteq(0,\infty)\times\R\times(0,\infty)$ and $\bm$ be a one-dimensional $\{\F_t\}$-Brownian motion on $(\Omega,\F,\P)$. For each $\alpha=(x,b,\sigma)\in U$, let $\Z^{(x,b,\sigma)}$ be a one-dimensional RBM in $\R_+$ with initial condition $x$, constant drift $b$ and variance $\sigma^2$, and driving Brownian motion $\bm$. It is well known (see, e.g., \cite[Chapter X.8]{Asmussen2007}) that $\Z^{(x,b,\sigma)}$ satisfies
	$$\Z^{(x,b,\sigma)}(t)=x+bt+\sigma\bm(t)+\Y^{(x,b,\sigma)}(t),\qquad t\geq0,$$
where $\Y^{(x,b,\sigma)}$ is given by
	$$\Y^{(x,b,\sigma)}(t)=\sup_{0\leq s\leq t}\lb-x-bs-\sigma\bm(s)\rb\vee0,\qquad t\geq0.$$
It is readily verified that the assumptions in Section \ref{sec:assumptions} hold. For $t\geq0$, define $\Theta_t:U\mapsto\R_+$ by 
	\be\label{eq:Ftxbsigma}\Theta_t(x,b,\sigma)\doteq\E\lsb\Z^{(x,b,\sigma)}(t)\rsb,\qquad(x,b,\sigma)\in U.\ee
We use our algorithm (Algorithm \ref{alg:ipa}) to estimate the first partial derivatives $\partial_x \Theta_1(1,-1,1)$, $\partial_b \Theta_1(1,-1,1)$ and $\partial_\sigma \Theta_1(1,-1,1)$. In the case of $\partial_b \Theta_1(1,-1,1)$, we compare our method IPA with the LR method. In Table \ref{tab:RBM} we compare the estimates with the analytic values obtained using the formula for the cumulative distribution function of a one-dimensional RBM given on \cite[page 49]{Harrison1985}, and, in the case of perturbations to the drift, with the estimates obtained using the LR method (Algorithm \ref{alg:lr}). In Figure \ref{fig:bias} we plot confidence intervals for the magnitude of the relative biases of the LR estimator and our IPA estimator for different time steps $\Delta$, which, as expected, decrease with $\Delta$. (The ``relative bias'' of an estimator is equal to the difference between the mean of the estimator and the analytic value divided by the magnitude of the analytic value.) In Figure \ref{fig:varIPALR} we compare the empirical variance of our estimator of $\partial_b\Theta_t(1,-1,1)$ with the empirical variance of the LR estimator of $\partial_b\Theta_t(1,-1,1)$ over the time interval $0\leq t\leq 10$. Our estimator performs substantially better than the LR estimator, especially as $t$ increases.

\begin{center}
\begin{table}
\caption{Analytic values (approximated up to four decimal places) of the first partial derivatives of $\Theta_1(\cdot,\cdot,\cdot)$ defined as in \eqref{eq:Ftxbsigma} and evaluated at $(1,-1,1)$, as well as 95\% confidence intervals for the LR estimate (when applicable) and the IPA estimate of the first partial derivatives of $\Theta_1(\cdot,\cdot,\cdot)$. Here $\Delta$ denotes the time step. Each estimate was obtained using $10^5$ trials and took approximately $10\Delta^{-1}$ CPU seconds to compute.}
\label{tab:RBM}
\begin{tabular}{| p{1.95cm} | p{.65cm} | p{2.1cm} | p{2.1cm} | p{2.1cm} |}
	\hline
	Method & $\Delta$ & $\partial_x\Theta_1(1,-1,1)$ & $\partial_b\Theta_1(1,-1,1)$ & $\partial_\sigma\Theta_1(1,-1,1)$ \\ \hline
\end{tabular}
\begin{tabular}{| p{1.95cm} | p{.65cm} | p{2.1cm} | p{2.1cm} | p{2.1cm} |}
	\hline
	Analytic & N/A & $.3319$ & $.4351$ & $.6681$ \\ \hline
\end{tabular}
\begin{tabular}{| p{1.95cm} | p{.65cm} | p{2.1cm} | p{2.1cm} | p{2.1cm} |}
	\hline
	LR & $10^{-2}$ & N/A & $.4504\pm.0071$ & N/A \\ \hline
	IPA & $10^{-2}$ & $.3634\pm.0029$ & $.4527\pm.0027$ & $.6209\pm.0035$ \\ \hline
\end{tabular}
\begin{tabular}{| p{1.95cm} | p{.65cm} | p{2.1cm} | p{2.1cm} | p{2.1cm} |}
	\hline
	LR & $10^{-3}$ & N/A & $.4415\pm.0073$ & N/A \\ \hline

	IPA & $10^{-3}$ & $.3414\pm.0029$ & $.4411\pm.0027$ & $.6527\pm.0035$ \\ \hline
\end{tabular}
\begin{tabular}{| p{1.95cm} | p{.65cm} | p{2.1cm} | p{2.1cm} | p{2.1cm} |}
	\hline
	LR & $10^{-4}$ & N/A & $.4329\pm.0072$ & N/A \\ \hline
	IPA & $10^{-4}$ & $.3359\pm.0029$ & $.4393\pm.0025$ & .$6611\pm.0035$ \\ \hline
\end{tabular}
\end{table}
\end{center}

\begin{figure}
        \begin{subfigure}{0.4\textwidth}
                \centering
                \includegraphics[width=.95\linewidth]{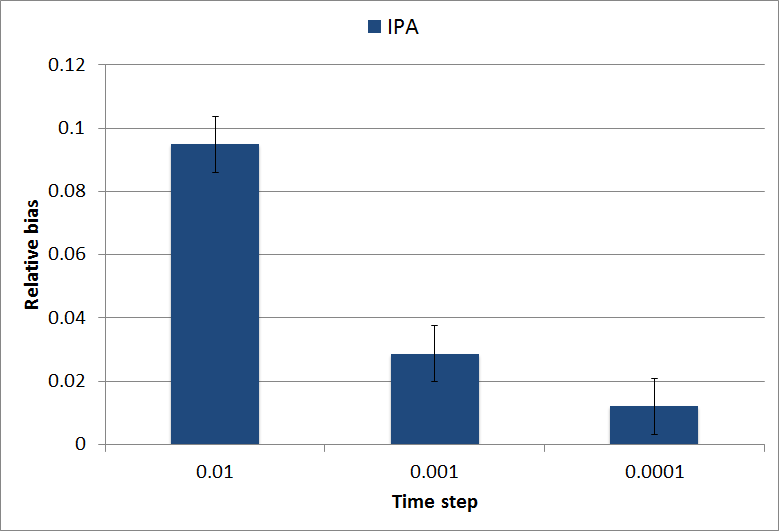}
                \caption{$\partial_x\Theta_1(1,-1,1)$}
                \label{fig:Dx}
        \end{subfigure}
        \begin{subfigure}{0.4\textwidth}
                \centering
                \includegraphics[width=.95\linewidth]{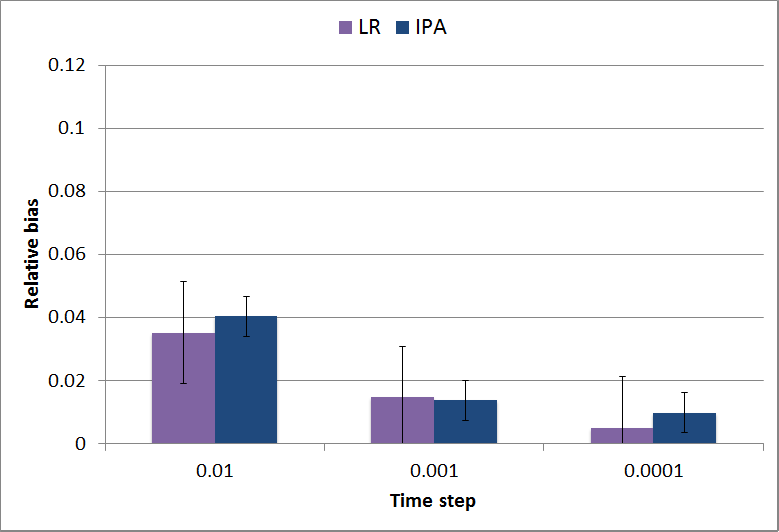}
                \caption{$\partial_b\Theta_1(1,-1,1)$}
                \label{fig:b'}
        \end{subfigure}
        \begin{subfigure}{0.4\textwidth}
                \centering
                \includegraphics[width=.95\linewidth]{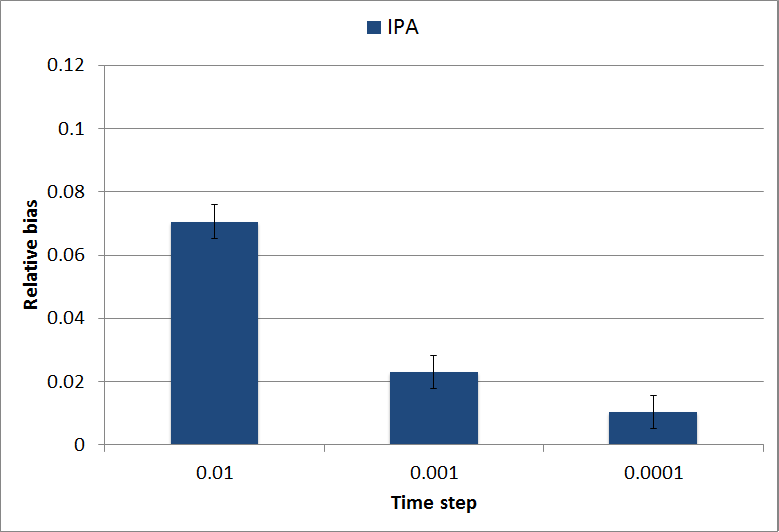}
                \caption{$\partial_\sigma\Theta_1(1,-1,1)$}
                \label{fig:Dsigma}
        \end{subfigure}%
        \caption{95\% confidence intervals for the magnitude of the relative biases of the LR estimators (when applicable) and IPA estimators for the first partial derivatives of $\Theta_1(\cdot,\cdot,\cdot)$ at $(1,-1,1)$ using different time steps. Each estimate was obtained using $10^5$ trials.}\label{fig:bias}
\end{figure}

\begin{figure}
	\includegraphics[width=.8\textwidth]{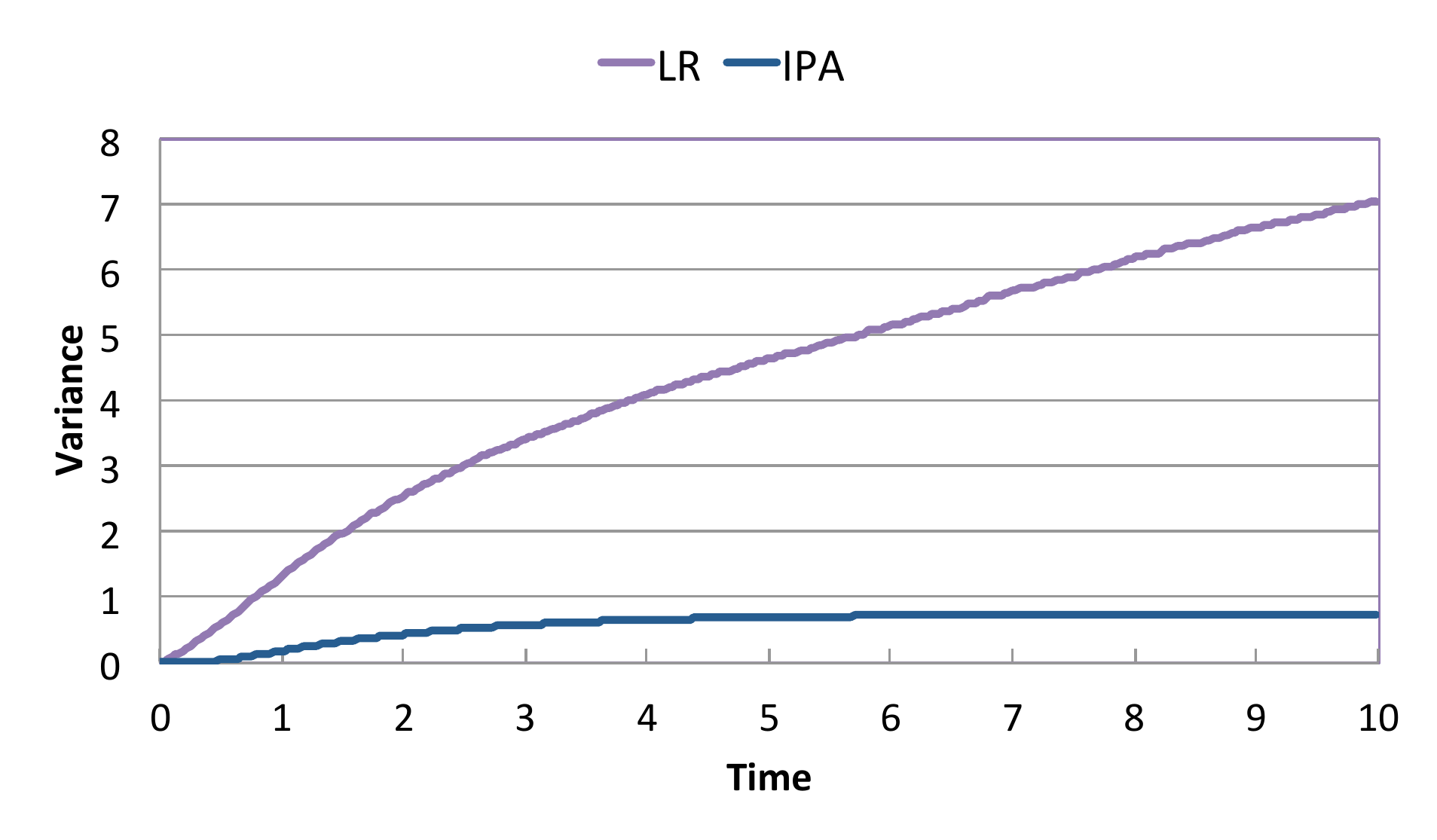}
	\caption{Empirical variance of the LR and IPA estimators for $\partial_b\Theta_t(1,-1,1)$, for $0\leq t\leq 10$. The estimates were obtained using $10^5$ trials and time step $\Delta=.01$.}\label{fig:varIPALR}
\end{figure}

\subsection{Rank-based interacting diffusion model of equity markets}
\label{subs-Atlas}

We consider a rank-based interacting diffusion model, referred to as the \emph{Atlas model}, which was first introduced by Fernholz in \cite[Example 5.3.3]{Fernholz2002} to study equity markets, and subsequently generalized by Banner, Fernholz and Karatzas \cite{Banner2005} and Ichiba et.\ al.\ \cite{Ichiba2011}. In the usual model of an equity market, the growth and volatility of each stock are fixed and depend on the identity of the stock, whereas in the class of Atlas models, the growth and volatility of each stock are determined by its relative rank within the market and thus are time-varying (and discontinuous). We consider the simplest version of the Atlas model (with $J\geq2$ stocks) in which the stocks have equal volatility, and the smallest stock, referred to as the \emph{Atlas stock}, has positive growth rate, whereas all other stocks have zero growth.  More precisely, let $g>0$, $\sigma>0$ and $\bm$ be a $J$-dimensional Brownian motion. Let $S=\{S(t),t\geq0\}$ denote the $J$-dimensional process whose $j$th component at time $t\geq0$, denoted $S^j(t)$, is equal to the value of the $j$th stock at time $t$. Then $S$ evolves according to  the stochastic differential equation (with discontinuous drift)
	\be\label{eq:dlogSt}d(\log S^j(t))=\gamma^j(t)dt+\sigma d\bm^j(t),\qquad t\geq0,\qquad j=1,\dots,J,\ee
where $\gamma^j(t)$ is equal to $Jg$ if $S^j(t)<S^k(t)$ for all $k\neq j$ and equal to zero otherwise, for $j=1,\dots,J$. Weak existence and uniqueness of solutions to \eqref{eq:dlogSt} follows from their relation to martingale problems, and existence and uniqueness results for martingale problems established in Stroock and Varadhan \cite{Stroock2006} and Bass and Pardoux \cite{Bass1987}. Let $\Z=\{\Z(t),t\geq0\}$ be the \emph{rank-ordered process}, where $\Z(t)$ is obtained by permuting the components of $\log S(t)$ (the logarithm is applied componentwise) so that they are in descending order, for $t\geq0$. Then according to \cite{Banner2005} (see the paragraph following the proof of Proposition 2.3 on page 2303) $\Z$ is a $J$-dimensional RBM in the Weyl chamber $G\doteq\{x\in\R^J:x^1\geq x^2\geq\cdots\geq x^J\}$ with drift vector $(0,\dots,0,Jg)^T$, diagonal covariance matrix $\sigma^2\Id_J$, and normal reflection along each of the faces $F_i\doteq\{x\in\partial G:x^i=x^{i+1}\}$, for $i=1,\dots,J-1$. 

In many applications in mathematical finance, it is of interest to compute sensitivities of functionals of the stocks in a market with respect to the underlying parameters of the market. For example, sensitivities of derivative prices with respect to the underlying market parameters, commonly referred to as the ``Greeks'', play an important role in risk management and hedging strategies (see, e.g., \cite[Chapter 7]{Glasserman2003}). Here, we study sensitivities related to  the Atlas model, which have not yet received much attention. A performance measure of interest in this context is the so-called diversity of the equity market, which is expressed in terms of the $J$-dimensional process  $\mu=\{\mu(t),t\geq0\}$ of \emph{relative market capitalizations} defined by $\mu(t)\doteq C(S(t))$ for $t\geq0$, where $C:(0,\infty)^J\mapsto\{x\in(0,1)^J:x^1+\dots+x^J=1\}$ is defined by
	\be\label{eq:mu}C^j(x)\doteq\frac{x^j}{x^1+\cdots+x^J},\qquad x\in(0,\infty)^J,\qquad j=1,\dots,J.\ee
Fix $p\in(0,1)$ and define $f:\R_+^J\to\R_+$ by 
	\be\label{eq:f}f(x)=(x^1)^p+\cdots+(x^J)^p,\qquad x\in\R_+^J.\ee
Then $f$ is the $p$th power of the \emph{diversity function} and $f(\mu(t))$ is a measure of the diversity of the market (see \cite[Example 3.4.4]{Fernholz2002}). Since $f(x)$ is invariant under permutations of the components of $x$ and $\Z$ is obtained by permuting the components of $\log S$, it follows from \eqref{eq:mu} and \eqref{eq:f} that $f(\mu(t))=\zeta(\Z(t))$, where $\zeta:G\mapsto[J^{p-1},1)$ is the continuously differentiable function defined by
	$$\zeta(x)\doteq\frac{e^{px^1}+\cdots+e^{px^J}}{\lb e^{x^1}+\cdots+e^{x^J}\rb^p},\qquad x\in G.$$
A straightforward computation shows that the gradient of $\zeta$ is bounded and continuous. Here, we first use a FD method based on the stochastic differential equation representation \eqref{eq:dlogSt} and then our IPA method for the reflected diffusion representation to estimate the sensitivity of $\E[f(\mu(t))]$ to an underlying parameter of the market.

Fix $\sigma>0$ and set $U\doteq(0,\infty)$. For each $\alpha\in U$ set the growth rate to be $g(\alpha)\doteq\sigma^2/2\alpha$, let $S^\alpha=\{S^\alpha(t),t\geq0\}$ be the $J$-dimensional process satisfying \eqref{eq:dlogSt} with $g=g(\alpha)$, let $\mu^\alpha=\{\mu^\alpha(t),t\geq0\}$ be the process of relative market capitalizations defined by $\mu^\alpha(t)\doteq C(S^\alpha(t))$ for $t\geq0$ and define
	$$\Theta_t(\alpha)\doteq\E\lsb f(\mu^\alpha(t))\rsb,\qquad t\geq0.$$
Let $t\geq0$ and $\alpha\in U$. Assuming that $\Theta_t(\cdot)$ is differentiable, it follows that, as $\ve\to0$,
	$$\Theta_t'(\alpha)=\frac{\E\lsb f(\mu^{\alpha+\ve}(t))\rsb-\E\lsb f(\mu^\alpha(t))\rsb}{\ve}+o(1).$$
In \cite{Yan2002} Yan proposed an Euler scheme for approximating solutions to stochastic differential equations with discontinuous drift and proved convergence properties of the Euler scheme. Using the algorithm described in \cite[Section 4]{Yan2002} one can obtain estimators $\wh{S^{\alpha+\ve}}$ and $\wh{S^\alpha}$ (using ``common random numbers'', see \cite[Chapter VII.1]{Asmussen2007}) for $S^{\alpha+\ve}$ and $S^\alpha$, respectively, and use the (forward-difference) approximation
	$$\Theta_t'(\alpha)\approx\frac{f(C(\wh{S^{\alpha+\ve}}(t)))-f(C(\wh{S^\alpha}(t)))}{\ve}.$$
Alternatively, let $\Z^\alpha$ denote the associated rank-ordered process so that $\Theta_t(\alpha)=\E[\zeta(\Z^\alpha(t))]$. Then we can use our method to estimate $\Theta_t(\alpha)$. Let $J=3$, $\sigma^2=3\times10^{-4}$, $p=1/2$ and $\Z^\alpha(0)=0$. In Figure \ref{fig:Atlas}, we plot estimates of $\Theta_t'(1)$ over the time interval $0\leq t\leq 5000$ obtained using Algorithm \ref{alg:ipa}. In Figure \ref{fig:AtlasVar} we plot the empirical variance of our IPA estimator (Algorithm \ref{alg:ipa}) and the FD estimator (described in \cite{Yan2002}) for $\Theta_{5000}'(1)$. As seen in Figure \ref{fig:AtlasVar}, the empirical variance of the FD estimators appears to diverge as $\ve$ goes to zero.

\begin{figure}
	\includegraphics[width=.8\textwidth]{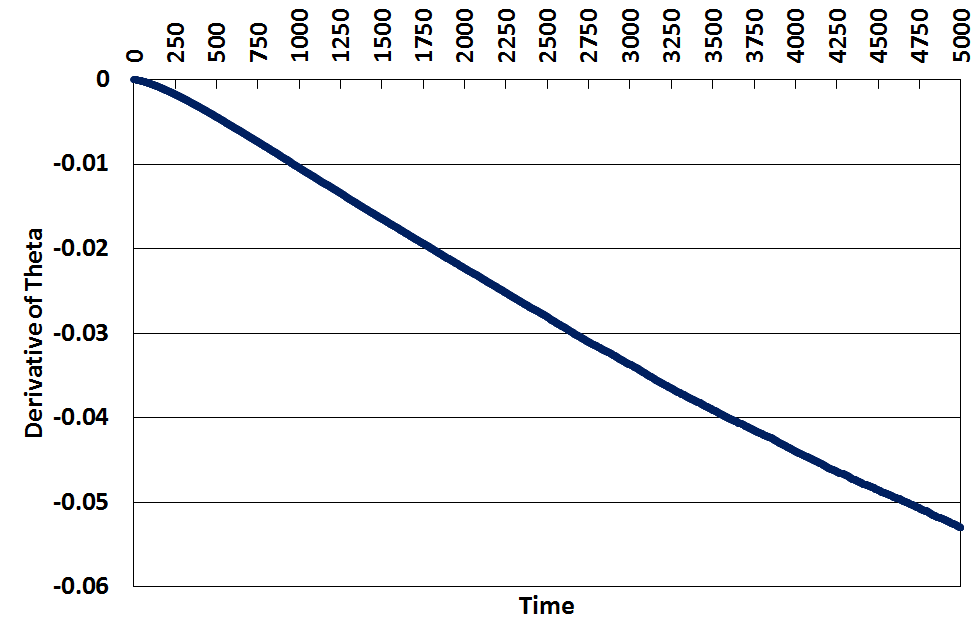}
	\caption{IPA estimates of $\Theta_t'(1)$ for $0\leq t\leq 5000$. The estimates were obtained using $10^5$ trials and time step $\Delta=1$.}\label{fig:Atlas}
\end{figure}

\begin{figure}
	\includegraphics[width=.8\textwidth]{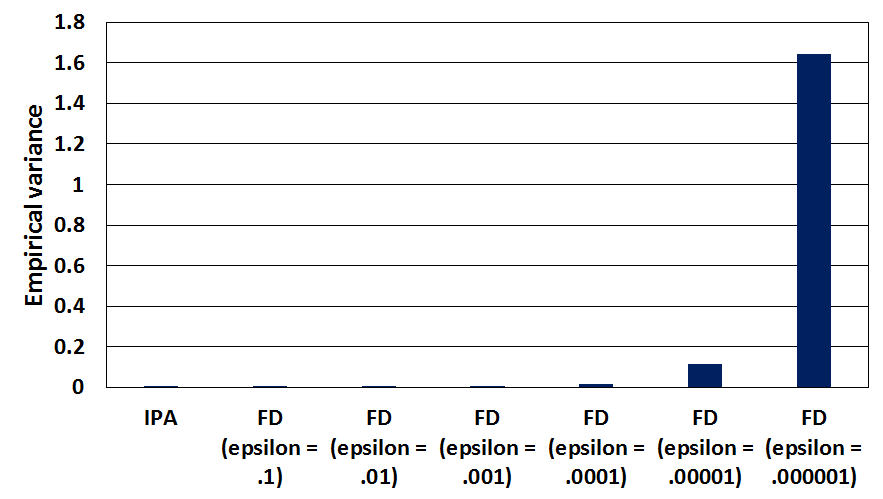}
	\caption{Empirical variance of the IPA estimator and FD estimators (for $\ve=.1,.01,.001,.0001,.00001,.000001$) of $\Theta_{5000}'(1)$. The estimates were obtained using $10^5$ trials and time step $\Delta=1$.}\label{fig:AtlasVar}
\end{figure}

\section{The Skorokhod problem and the derivative problem}\label{sec:spdp}

In this section we recall the deterministic SP, extended Skorokhod problem (ESP) and derivative problem. We state some useful properties and also state a new continuity result for the derivative problem (see Theorem \ref{thm:dmcontinuous} below) that is needed in the proof of Theorem \ref{thm:approx} and whose proof is deferred to  Section \ref{sec:dmcontinuous}. Throughout this section we fix $\alpha\in U$. Recall that Assumptions \ref{ass:independent}, \ref{ass:setB} and \ref{ass:projection} hold.

\subsection{The Skorokhod problem and extended Skorokhod problem}\label{sec:sp}

\begin{defn}\label{def:sp}
Given $\x\in\dr_G(\R^J)$, a pair $(\z,\push)\in\dr(G)\times\dr(\R_+^N)$ is a solution to the SP $\{(d_i(\alpha),n_i,c_i),i\in\allN\}$ for $\x$ if
	$$\z(t)=\x(t)+R(\alpha)\push(t),\qquad t\geq0,$$
where $\push(0)=0$ and for each $i\in\allN$, the $i$th component of $\push$, denoted $\push^i$, is nondecreasing and can only increase when $\z$ lies in face $F_i$; that is,
	$$\int_0^\infty 1_{\{\z(s)\not\in F_i\}}d\push^i(s)=0.$$
If there exists a unique solution $(\z,\push)$ to the SP $\{(d_i(\alpha),n_i,c_i),i\in\allN\}$ for $\x$, we write $\z=\sm^\alpha(\x)$ and refer to $\sm^\alpha$ as the associated SM.
\end{defn}

In the following we state the ESP, which is an extension of the SP that allows for a constraining term with unbounded variation. We introduce the ESP formulation because it will be more convenient to work with in subsequent proofs.

\begin{defn}\label{def:esp}
Given $\x\in\dr_G(\R^J)$, a pair $(\z,\y)\in\dr(\R^J)\times\dr(\R^J)$ is a solution to the ESP $\{(d_i(\alpha),n_i,c_i),i\in\allN\}$ for $\x$ if $\z(0)=\x(0)$ and the following conditions hold:
\begin{itemize}
	\item[1.] $\z(t)=\x(t)+\y(t)$ for all $t\geq0$;
	\item[2.] $\z(t)\in G$ for all $t\geq0$;
	\item[3.] for all $0\leq s<t<\infty$,
		$$\y(t)-\y(s)\in\conv\lsb\cup_{u\in(s,t]}d(\alpha,\z(u))\rsb;$$
	\item[4.] for all $t>0$, $\y(t)-\y(t-)\in\conv\lsb d(\alpha,\z(t))\rsb$.
\end{itemize}
If there exists a unique solution $(\z,\y)$ to the ESP $\{(d_i(\alpha),n_i,c_i),i\in\allN\}$ for $\x$, we write $\z=\esm^\alpha(\x)$ and refer to $\esm^\alpha$ as the associated ESM.
\end{defn}

\begin{remark}\label{rmk:X}
Given a reflected diffusion $\Z^\alpha$ with $\L^\alpha$ satisfying the conditions in Definition \ref{def:rd} define the $\{\F_t\}$-adapted $J$-dimensional continuous process $\X^\alpha=\{\X^\alpha(t),t\geq0\}$ by
	\be\label{eq:X}\X^\alpha(t)\doteq x_0(\alpha)+\int_0^tb(\alpha,\Z^{\alpha}(s))ds+\int_0^t\sigma(\alpha,\Z^{\alpha}(s))d\bm(s),\qquad t\geq0.\ee
Then by the properties of $(\Z^\alpha,\L^\alpha)$ stated in Definition \ref{def:rd}, $(\Z^\alpha,\L^\alpha)$ is a solution to the SP $\{(d_i(\alpha),n_i,c_i),i\in\allN\}$ for $\X^\alpha$. Define the constraining process $\Y^\alpha$ by $\Y^\alpha\doteq R(\alpha)\L^\alpha$. Then by the properties of $\Y^\alpha$ stated in Remark \ref{rmk:Y}, $(\Z^\alpha,\Y^\alpha)$ is a solution to the ESP $\{(d_i(\alpha),n_i,c_i),i\in\allN\}$ for $\X^\alpha$.
\end{remark}

The following is a time-shift property to the ESP. A similar property holds for the SP; however, it is not needed in this work.

\begin{lem}[{\cite[Lemma 2.3]{Ramanan2006}}]
\label{lem:sptimeshift}
Suppose $(\z,\y)$ is a solution to the ESP $\{(d_i(\alpha),n_i,c_i),i\in\allN\}$ for $\x\in\dr_G(\R^J)$. Let $s\geq0$ and define $\z^s\in\dr(G)$, $\y^s\in\dr(\R^J)$ and $\x^s\in\dr_G(\R^J)$ by
\begin{align}\label{eq:zs}
	\z^s(\cdot)&\doteq\z(s+\cdot),\qquad \y^s(\cdot)\doteq\y(s+\cdot)-\y(s),\qquad\x^s(\cdot)\doteq\z(s)+\x(s+\cdot)-\x(s).
\end{align}
Then $(\z^s,\y^s)$ is a solution to the ESP $\{(d_i(\alpha),n_i,c_i),i\in\allN\}$ for $\x^s$.
\end{lem}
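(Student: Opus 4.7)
The statement is a straightforward verification of the four defining conditions of the ESP (Definition 7.2) for the shifted triple $(\z^s, \y^s, \x^s)$, so my plan is to simply check each condition in turn using the corresponding property for the original triple $(\z, \y, \x)$. First I would record the initial-condition check: $\z^s(0) = \z(s)$ and $\x^s(0) = \z(s)$ by the definition of $\x^s$, so $\z^s(0) = \x^s(0)$.

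For condition 1, I would use the identity $\x(u) = \z(u) - \y(u)$ (valid for every $u \geq 0$ since $(\z,\y)$ solves the ESP for $\x$) to compute
\begin{align*}
\x^s(t) + \y^s(t) &= \z(s) + \x(s+t) - \x(s) + \y(s+t) - \y(s) \\
&= \z(s) + \bigl(\z(s+t) - \y(s+t)\bigr) - \bigl(\z(s) - \y(s)\bigr) + \y(s+t) - \y(s) \\
&= \z(s+t) = \z^s(t).
\end{align*}
Condition 2 is immediate: $\z^s(t) = \z(s+t) \in G$ for all $t \geq 0$ since $\z$ takes values in $G$.

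For condition 3, given $0 \leq u < v < \infty$ I would write
\[
\y^s(v) - \y^s(u) = \y(s+v) - \y(s+u) \in \conv\bigl[\cup_{w \in (s+u,s+v]} d(\alpha, \z(w))\bigr] = \conv\bigl[\cup_{r \in (u,v]} d(\alpha, \z^s(r))\bigr],
\]
where the first inclusion comes from applying condition 3 for $(\z,\y)$ on the interval $(s+u, s+v]$ and the final equality is just the change of variables $w = s + r$ combined with the definition $\z^s(r) = \z(s+r)$. Condition 4 follows similarly: for $t > 0$,
\[
\y^s(t) - \y^s(t-) = \y(s+t) - \y((s+t)-) \in \conv\bigl[d(\alpha, \z(s+t))\bigr] = \conv\bigl[d(\alpha, \z^s(t))\bigr],
\]
where I use that $\y^s$ has the same left limits (shifted by $s$) as $\y$, together with condition 4 for $(\z, \y)$ at time $s+t$.

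Since each of the four conditions in Definition 7.2 has been verified directly for $(\z^s, \y^s)$ with input $\x^s$, the proof concludes. There is no real obstacle here — the lemma is essentially a bookkeeping exercise, and in fact the statement is already attributed to \cite[Lemma 2.3]{Ramanan2006}, so the paper may simply cite that reference in place of reproducing the verification.
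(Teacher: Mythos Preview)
Your verification is correct and complete; the paper itself does not reproduce a proof but simply cites \cite[Lemma 2.3]{Ramanan2006}, exactly as you anticipated in your final sentence.
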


The following proposition ensures the SP and ESP are well defined and the associated SM and ESM are Lipschitz continuous.

\begin{prop}\label{prop:sp}
Let $\alpha\in U$. 
\begin{itemize}
	\item[(i)] There exists $\lip_\sm(\alpha)<\infty$ such that if, for $k=1,2$, $\x_k\in\dr_G(\R^J)$ and $(\z_k,\ell_k)$ is the solution to the SP $\{(d_i(\alpha),n_i,c_i),i\in\allN\}$ for $\x_k$, then for all $t\geq0$,
	\begin{align}\label{eq:smlip}
		\sup_{0\leq s\leq t}|\z_1(s)-\z_2(s)|+\sup_{0\leq s\leq t}|\ell_1(s)-\ell_2(s)|\leq\lip_\sm(\alpha)\sup_{0\leq s\leq t}|\x_1(s)-\x_2(s)|.
	\end{align}
	\item[(ii)] There exists $\lip_{\esm}(\alpha)<\infty$ such that if, for $k=1,2$, $\x_k\in\dr_G(\R^J)$ and $(\z_k,\y_k)$ is the solution to the ESP $\{(d_i(\alpha),n_i,c_i),i\in\allN\}$ for $\x_k$, then for all $t\geq0$,
	\begin{align}\label{eq:esmlip}
		\sup_{0\leq s\leq t}|\z_1(s)-\z_2(s)|+\sup_{0\leq s\leq t}|\y_1(s)-\y_2(s)|\leq\lip_{\esm}(\alpha)\sup_{0\leq s\leq t}|\x_1(s)-\x_2(s)|.
	\end{align}
	\item[(iii)] Given $\x\in\dr_G(\R^J)$ there exists a unique solution $(\z,\ell)$ to the SP $\{(d_i(\alpha),n_i,c_i),i\in\allN\}$ for $\x$.
	\item[(iv)] Given $\x\in\dr_G(\R^J)$, there exists a unique solution $(\z,\y)$ to the ESP $\{(d_i(\alpha),n_i,c_i),i\in\allN\}$ for $\x$.
\end{itemize}
\end{prop}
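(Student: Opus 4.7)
The plan is to establish the four parts in the order (ii), (iv), (iii), (i), since Lipschitz continuity of the ESM drives the remaining conclusions. For (ii), I would invoke the classical Dupuis--Ishii-type argument based on the compact, symmetric, convex set $B^\alpha$ supplied by Assumption \ref{ass:setB}. Let $g^\alpha(x) \doteq \inf\{r > 0 : x \in rB^\alpha\}$ denote its Minkowski functional; since $0 \in (B^\alpha)^\circ$ and $B^\alpha$ is compact, $g^\alpha$ is a norm on $\R^J$ equivalent to $|\cdot|$. The geometric content of \eqref{eq:setB} is that on any portion of $\partial B^\alpha$ whose outward normals are nearly orthogonal to some $n_i$, those normals are in fact orthogonal to $d_i(\alpha)$, so $g^\alpha$ is non-increasing along the reflection direction $d_i(\alpha)$. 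A standard computation then shows that $t \mapsto g^\alpha(z_1(t) - z_2(t)) - g^\alpha(x_1(t) - x_2(t))$ is non-increasing for ESP solutions $(z_k, y_k)$ driven by $x_k$, which, combined with the norm equivalence, yields \eqref{eq:esmlip} with $\lip_{\esm}(\alpha)$ determined by the equivalence constants. The bound on $y_1 - y_2$ then follows from $y_k = z_k - x_k$ and the triangle inequality.

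Uniqueness in (iv) is immediate from (ii) with $x_1 = x_2$. For existence I would use an Euler-type construction in the spirit of Section \ref{sec:EulerRD}: discretize $x$ at mesh $\Delta$, solve the discretized ESP by repeated application of the projection map $\pi^\alpha$ from Assumption \ref{ass:projection}, and let $\Delta \downarrow 0$. Applying the uniform Lipschitz estimate from (ii) to the discrete approximations yields precompactness in the $J_1$-topology, and any limit point is a solution of Definition \ref{def:esp}; jumps in $x$ are absorbed into condition 4 via a single application of $\pi^\alpha$.

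For parts (iii) and (i), Assumption \ref{ass:independent} provides the final piece. Linear independence of $\{d_i(\alpha) : i \in \allN(z)\}$ forces the cone $\conv(\cup_{u \in (s,t]} d(\alpha, z(u)))$ in condition 3 of the ESP to coincide with $\{R(\alpha) v : v \in \R_+^N,\ v^i = 0 \text{ unless } i \in \allN(z(u)) \text{ for some } u \in (s,t]\}$, and the decomposition $y = R(\alpha)\ell$ with $\ell$ nondecreasing and supported on the appropriate faces is unique. Consequently solutions of the SP and ESP are in bijection under $y = R(\alpha) \ell$, yielding (iii) from (iv). The same linear independence supplies a uniform bound on the restriction of $R(\alpha)$ to the subspace spanned by any admissible set of active directions, so the ESM bound on $y_1 - y_2$ from (ii) transfers into the desired bound on $\ell_1 - \ell_2$, giving \eqref{eq:smlip}.

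The main obstacle is part (ii); the delicate step is verifying the non-increase of $g^\alpha(z_1(\cdot) - z_2(\cdot))$ whenever the driving paths agree, which requires careful bookkeeping of the subdifferential of $g^\alpha$ in terms of the inward normals of $B^\alpha$ and the pairing with the reflection directions through \eqref{eq:setB}. This has been worked out in several prior works on oblique reflection in convex polyhedral domains (see, e.g., \cite{Dupuis1999a, Ramanan2006, Lipshutz2016}), and I would simply invoke those existing results rather than rederive the gauge-function monotonicity from scratch.
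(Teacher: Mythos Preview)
Your proposal is correct and follows essentially the same route as the paper: part (ii) via the Dupuis--Ishii set $B^\alpha$ argument (which is precisely the content of \cite[Theorem 3.3]{Ramanan2006} that the paper invokes), uniqueness in (iv) from (ii), and the passage from ESP to SP in (iii) and (i) via the linear independence in Assumption \ref{ass:independent} (which is \cite[Lemma 2.1]{Lipshutz2017}). The only substantive difference is that for existence in (iv) the paper simply cites \cite[Lemma 2.6]{Ramanan2006}, whereas you sketch an Euler--projection construction; both are standard and valid, and since you close by saying you would invoke the existing literature anyway, the two proofs are effectively the same.
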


\begin{proof}
The Lipschitz continuity of the ESM \eqref{eq:esmlip} stated in part (ii) follows from Assumption \ref{ass:setB} and \cite[Theorem 3.3]{Ramanan2006}. Part (iv) follows from part (ii) and \cite[Lemma 2.6]{Ramanan2006}. Parts (i) and (iii) then follow from parts (ii) and (iv), Assumption \ref{ass:independent}, and \cite[Lemma 2.1]{Lipshutz2017}.
\end{proof}

We close this section by stating a version of the boundary jitter property that was introduced in \cite[Definition 3.1]{Lipshutz2016}. The boundary jitter property will be important for establishing continuity properties of the so-called derivative map.

\begin{defn}\label{def:jitter}
A pair $(\z,\y)\in\cts(G)\times\cts(\R^J)$ satisfies the boundary jitter property if the following hold:
\begin{itemize}
	\item[1.] If $\z(t)\in\partial G$ for some $t_1<t<t_2$, then $\y$ is nonconstant on $(t_1\vee0,t_2)$.
	\item[2.'] $\z$ does not spend positive Lebesgue time in the boundary $\partial G$; that is,
		$$\int_0^\infty 1_{\partial G}(\z(s))ds=0.$$
	\item[3.] If $\z(t)\in\U$ for some $t>0$, then for each $i\in\allN(\z(t))$ and all $\delta\in(0,t)$ there exists $s\in(t-\delta,t)$ such that $\allN(\z(s))=\{i\}$.
	\item[4.] If $\z(0)\in\U$, then for each $i\in\allN(\z(0))$ and all $\delta>0$, there exists $s\in(0,\delta)$ such that $\allN(\z(s))=\{i\}$.
\end{itemize}
\end{defn}

\begin{remark}
Condition 2' of the boundary jitter property is slightly stronger than condition 2 of \cite[Definition 3.1]{Lipshutz2017}, which only requires that $\z$ does not spend positive Lebesgue time in the \emph{nonsmooth} part of the boundary (i.e., where two or more faces intersect). The stronger version of condition 2 stated in Definition \ref{def:jitter} is used in the proof of Theorem \ref{thm:dmcontinuous} below.
\end{remark}

For the following, given $\alpha\in U$ and the associated reflected diffusion $\Z^\alpha$, let $\Y^\alpha$ denote the constraining process introduced in Remark \ref{rmk:Y}.

\begin{prop}\label{prop:jitter}
Let $\alpha\in U$. Then a.s.\  $(\Z^\alpha,\Y^\alpha)$ satisfies the boundary jitter property.
\end{prop}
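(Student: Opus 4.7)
The plan is to verify the four conditions of Definition \ref{def:jitter} for $(\Z^\alpha,\Y^\alpha)$ separately, relying on the ellipticity hypothesis (Assumption \ref{ass:elliptic}), the strong Markov property of $\Z^\alpha$, and a local comparison to a reflected Brownian motion obtained by freezing coefficients.

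For condition 2', I would first argue that under Assumption \ref{ass:elliptic} the law of $\Z^\alpha(s)$ is absolutely continuous with respect to $J$-dimensional Lebesgue measure on $G$ for each $s>0$. This is a standard consequence of uniform ellipticity for reflected diffusions and can be derived, for instance, from Krylov's occupation-time estimate adapted to the reflected setting, from a parametrix construction of the transition density, or from a Girsanov reduction to the zero-drift case. Since $\partial G$ has zero $J$-dimensional Lebesgue measure, Fubini then yields $\E\int_0^\infty 1_{\partial G}(\Z^\alpha(s))\,ds=\int_0^\infty\P(\Z^\alpha(s)\in\partial G)\,ds=0$, so the integral vanishes almost surely.

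For condition 1, suppose toward a contradiction that $\Z^\alpha(t_*)\in F_i$ for some $t_*\in(t_1,t_2)$ but $\Y^\alpha$ is constant on $(t_1\vee 0,t_2)$. Then on this interval $\Z^\alpha$ satisfies the unreflected It\^o SDE obtained by dropping the reflection term in \eqref{eq:Z}, and It\^o's formula shows that the signed-distance component $\rho_i(t)\doteq\ip{\Z^\alpha(t),n_i}-c_i$ is a one-dimensional It\^o process with quadratic-variation density bounded below by $c(\alpha)>0$ and with $\rho_i(t_*)=0$. A standard Gaussian lower bound then forces $\rho_i$ to take strictly negative values with positive probability in any neighborhood of $t_*$, contradicting $\Z^\alpha(t)\in G$ for all $t\geq 0$.

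For conditions 3 and 4, I would apply the strong Markov property at each hitting time of $\U$ to reduce to studying $\Z^\alpha$ started from a non-smooth boundary point $x_*\in\U$. In a small neighborhood of $x_*$ the coefficients are H\"older continuous and $G$ locally coincides with the polyhedral cone generated by $\{F_i:i\in\allN(x_*)\}$, so by Girsanov for the drift combined with a stopping-time argument one can couple $\Z^\alpha$ on a short initial interval to a reflected Brownian motion in that cone with constant covariance and frozen directions of reflection. The corresponding jitter property for such an RBM, established in \cite{Lipshutz2016}, then transfers to $\Z^\alpha$. The main obstacle is this final step: accessibility of the smooth part of each individual face near a corner depends delicately on the interplay between the non-degenerate noise and the reflection directions, so the freezing argument must preserve this accessibility; this in turn requires Assumption \ref{ass:independent} to rule out degenerate reflection behavior at the corner.
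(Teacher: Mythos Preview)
The paper's own proof is a two-line citation: conditions 1, 3 and 4 are imported from \cite[Theorem 3.3]{Lipshutz2017} and condition 2' from \cite[Lemma 4.3]{Lipshutz2017}. Your proposal instead sketches from scratch the kind of arguments that presumably underlie those cited results, so the two approaches differ only in that yours unpacks what the paper treats as a black box.

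Your outline is in the right spirit, but a few points would need sharpening before it stands on its own. For condition 1, the phrase ``with positive probability'' is misleading: the contradiction should be almost sure on the event in question, since a one-dimensional It\^o process with quadratic-variation density bounded away from zero a.s.\ changes sign in every neighborhood of a zero (this is a consequence of the law of the iterated logarithm for continuous local martingales after a time change). For condition 2', invoking absolute continuity of the marginal law is heavier machinery than needed and is itself nontrivial for obliquely reflected diffusions in nonsmooth domains; the zero-occupation-time statement can be obtained more directly, and indeed the paper's cited lemma does so without a full density result. For conditions 3 and 4, the coupling-to-frozen-RBM argument is plausible but is precisely where the real work lies; note also that the jitter property for reflected diffusions is established in \cite{Lipshutz2017}, not \cite{Lipshutz2016}, so your final citation would need to be adjusted, and you correctly flag that the accessibility-of-single-faces step is the delicate part that a full proof must address.
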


\begin{proof}
Conditions 1, 3 and 4 of the jitter property hold due to \cite[Theorem 3.3]{Lipshutz2017}. Condition 2' follows from \cite[Lemma 4.3]{Lipshutz2017}.
\end{proof}

\subsection{The derivative problem}

The derivative problem was first introduced in \cite{Lipshutz2016} as an axiomatic framework for studying directional derivatives of the ESM. The formulation of the derivative problem can be thought of as a linearization to the ESP (note the similarities between Definition \ref{def:esp} and Definition \ref{def:dp}).

\begin{defn}\label{def:dp}
Given $\x\in\dr_G(\R^J)$, suppose $(\z,\y)$ is a solution to the ESP $\{(d_i(\alpha),n_i,c_i),i\in\allN\}$ for $\x$. Let $\psi\in\dr(\R^J)$. Then $(\phi,\eta)\in\dr(\R^J)\times\dr(\R^J)$ is a solution to the derivative problem along $\z$ for $\psi$ if $\eta(0)\in\spaan[d(\alpha,\z(0))]$ and the following conditions hold:
\begin{itemize}
	\item[1.] $\phi(t)=\psi(t)+\eta(t)$ for all $t\geq0$;
	\item[2.] $\phi(t)\in \hyper_{\z(t)}$ for all $t\geq0$;
	\item[3.] for all $0\leq s<t<\infty$,
		$$\eta(t)-\eta(s)\in\spaan\lsb\cup_{u\in(s,t]}d(\alpha,\z(u))\rsb;$$
	\item[4.] for all $t>0$, $\eta(t)-\eta(t-)\in\spaan\lsb d(\alpha,\z(t))\rsb$.
\end{itemize}
If there exists a unique solution $(\phi,\eta)$ to the derivative problem along $\z$ for $\psi$, we write $\phi=\dm_\z^\alpha(\psi)$ and refer to $\dm_{\z}^\alpha$ as the derivative map associated with $\z$.
\end{defn}

\begin{remark}
Definition \ref{def:dp} is slightly different from the definition given in \cite[Definition 3.4]{Lipshutz2016}, where $\z$ was assumed to be continuous. Here we allow for $\z$ to lie in $\dr(\R^J)$ and impose condition 4, which follows from condition 3 when $\z$ is continuous. When $\z$ is not continuous, condition 3 only guarantees that $\eta(t)-\eta(t-)\in\spaan\lsb d(\alpha,\z(t-))\cup d(\alpha,\z(t))\rsb$.
\end{remark}

\begin{remark}\label{rmk:psib} 
Given a derivative process $\phib^\alpha$ along a reflected diffusion $\Z^\alpha$, define the continuous $\{\F_t\}$-adapted process $\psib^\alpha=\{\psib^\alpha(t),t\geq0\}$ taking values in $\R^{J\times M}$, for $t\geq0$, by
\begin{align}\label{eq:psib}
	\psib^\alpha(t)&\doteq x_0'(\alpha)+\int_0^tb_\alpha(\alpha,\Z^\alpha(s))ds+\int_0^tb_x(\alpha,\Z^\alpha(s))\phib^\alpha(s) ds\\ \notag
	&\qquad+\int_0^t\sigma_\alpha(\alpha,\Z^\alpha(s))d\bm(s)+\int_0^t\sigma_x(\alpha,\Z^\alpha(s))\phib^\alpha(s) d\bm(s)+R'(\alpha)\L^\alpha(t).
\end{align}
For each $m=1,\dots,M$, it follows from the properties of $(\phib_m^\alpha,\etab_m^\alpha)$ stated in Definition \ref{def:de} that a.s.\ $(\phib_m^\alpha,\etab_m^\alpha)$ is a solution to the derivative problem along $\Z^\alpha$ for $\psib_m^\alpha$. In other words, a.s.\ $\phib_m^\alpha=\dm_{\Z^{\alpha}}^\alpha(\psib_m^\alpha)$.
\end{remark}

In the following we state some important properties of the derivative problem and its associated derivative map. The first lemma states a time-shift property of the derivative problem.

\begin{lem}
\label{lem:dptimeshift}
Let $(\z,\y)$ be a solution to the ESP $\{(d_i,n_i,c_i),i\in\allN\}$ for $\x\in\dr_G(\R^J)$. Suppose $(\phi,\eta)$ is a solution to the derivative problem along $\z$ for $\psi\in\dr(\R^J)$. Let $s\geq0$ and define $\z^{s}\in\dr(G)$ as in \eqref{eq:zs}, and define $\phi^{s},\eta^{s},\psi^s\in\dr(\R^J)$ by
\begin{align}\label{eq:phis}
	\phi^{s}(\cdot)&\doteq\phi(s+\cdot),\qquad\eta^{s}(\cdot)\doteq\eta(s+\cdot)-\eta(s),\qquad\psi^{s}(\cdot)\doteq\phi(s)+\psi(s+\cdot)-\psi(s).
\end{align}
Then $(\phi^s,\eta^s)$ is a solution to the derivative problem along $\z^s$ for $\psi^s$.
\end{lem}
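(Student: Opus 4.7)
The plan is to verify directly that $(\phi^s,\eta^s)$ satisfies the four conditions of Definition \ref{def:dp} (plus the initial condition on $\eta$) along $\z^s$ for $\psi^s$, using the corresponding properties that $(\phi,\eta)$ satisfies along $\z$ for $\psi$. No deep machinery is needed; this is essentially an unwinding of the definitions, analogous in spirit to the time-shift property for the ESP recorded in Lemma \ref{lem:sptimeshift}.

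First I would dispose of the initial constraint on $\eta^s$. By definition $\eta^s(0)=\eta(s)-\eta(s)=0$, which trivially lies in $\spaan[d(\alpha,\z^s(0))]$, so the starting condition from Definition \ref{def:dp} holds automatically, regardless of whether $s$ is a jump time of $\eta$. Next, for condition 1, I would check $\phi^s(t)=\psi^s(t)+\eta^s(t)$ by substituting the definitions:
\begin{align*}
\psi^s(t)+\eta^s(t) &= \phi(s)+\psi(s+t)-\psi(s)+\eta(s+t)-\eta(s) \\
&= \bigl[\phi(s)-\psi(s)-\eta(s)\bigr]+\psi(s+t)+\eta(s+t) \\
&= \psi(s+t)+\eta(s+t)=\phi(s+t)=\phi^s(t),
\end{align*}
where I used $\phi(s)=\psi(s)+\eta(s)$ (condition 1 for $(\phi,\eta)$). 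Condition 2 is immediate: $\phi^s(t)=\phi(s+t)\in\hyper_{\z(s+t)}=\hyper_{\z^s(t)}$.

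For condition 3, fix $0\leq u<t<\infty$. Then $\eta^s(t)-\eta^s(u)=\eta(s+t)-\eta(s+u)$, and condition 3 for $(\phi,\eta)$ gives
\[
\eta(s+t)-\eta(s+u)\in\spaan\lsb\cup_{v\in(s+u,s+t]}d(\alpha,\z(v))\rsb=\spaan\lsb\cup_{v\in(u,t]}d(\alpha,\z^s(v))\rsb,
\]
as required. For condition 4, for $t>0$ one has $\eta^s(t)-\eta^s(t-)=\eta(s+t)-\eta((s+t)-)\in\spaan[d(\alpha,\z(s+t))]=\spaan[d(\alpha,\z^s(t))]$ by condition 4 for $(\phi,\eta)$.

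There is no real obstacle here; the one place to be slightly careful is in writing condition 4 for $(\phi^s,\eta^s)$ at $t>0$, since $\eta^s$ at $t=0$ could in principle jump (if $s>0$ and $s$ is a jump time of $\eta$), but that jump is absorbed into the initial value $\eta^s(0)=0$ and is controlled by the initial condition on $\eta^s$, which we have already verified. Having checked all four conditions and the initial constraint, the lemma follows.
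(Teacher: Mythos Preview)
Your proof is correct and follows essentially the same approach as the paper: a direct verification of the conditions in Definition \ref{def:dp}. The paper cites \cite[Lemma 5.2]{Lipshutz2016} for conditions 1--3 (the continuous case) and then verifies condition 4 exactly as you do; you simply spell out conditions 1--3 explicitly as well.
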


\begin{proof}
This was shown for $\x\in\cts_G(\R^J)$ in \cite[Lemma 5.2]{Lipshutz2016}. A similar argument can be used to prove the lemma in the case that $\x\in\dr_G(\R^J)$. The only difference is that condition 4 must also be verified. To see that condition 4 holds, fix $s\geq0$ and let $t>0$. By the definition of $\eta^s$, condition 4 to the DP and the definition of $\z^s$,
\begin{align*}
	\eta^s(t)-\eta^s(t-)&=\eta(s+t)-\eta((s+t)-)\in\spaan\lsb d(\alpha,\z(s+t))\rsb=\spaan\lsb d(\alpha,\z^s(t))\rsb.
\end{align*}
This proves condition 4 of the DP holds.
\end{proof}

\begin{prop}
\label{prop:dmlip}
Let $\alpha\in U$. There exists $\lip_\dm(\alpha)<\infty$ such that if $(\z,\y)$ is the solution to the ESP $\{(d_i(\alpha),n_i,c_i),i\in\allN\}$ for $\x\in\dr_G(\R^J)$, and, for $k=1,2$, $(\phi_k,\eta_k)$ is a solution to the derivative problem along $\z$ for $\psi_k\in\dr(\R^J)$, then for all $t<\infty$,
	\be\sup_{0\leq s\leq t}|\phi_1(s)-\phi_2(s)|\leq\lip_{\dm}(\alpha)\sup_{0\leq s\leq t}|\psi_1(s)-\psi_2(s)|.\ee
\end{prop}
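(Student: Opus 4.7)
My plan is a linearity reduction followed by an a priori estimate adapted from the continuous directing-path Lipschitz theory of \cite{Lipshutz2016}. The first step is to observe that the derivative problem is linear in $\psi$: since $\hyper_{\z(t)}$, $\spaan[d(\alpha,\z(t))]$, and $\spaan[\cup_{u\in(s,t]}d(\alpha,\z(u))]$ are vector subspaces of $\R^J$ (and in particular closed under subtraction), setting $\phi := \phi_1-\phi_2$, $\eta := \eta_1-\eta_2$, and $\psi := \psi_1-\psi_2$ and inspecting the four conditions of Definition \ref{def:dp} shows that $(\phi,\eta)$ is itself a solution to the derivative problem along $\z$ for $\psi$. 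Hence it suffices to establish the single-input a priori bound
$$\sup_{0\leq s\leq t}|\phi(s)|\leq \lip_\dm(\alpha)\sup_{0\leq s\leq t}|\psi(s)|$$
for any solution $(\phi,\eta)$ of the derivative problem along $\z$ for $\psi$.

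The second step is to invoke the Lipschitz continuity of the derivative map for continuous directing paths established in \cite{Lipshutz2016}, which provides the desired bound with a constant depending only on the data $\{(d_i(\alpha),n_i,c_i),i\in\allN\}$, through the set $B^\alpha$ of Assumption \ref{ass:setB} and the operator norms of the projections $\proj_x^\alpha$ from Lemma \ref{lem:projx}; crucially, the constant is uniform in the continuous path. Combined with the time-shift invariance of Lemma \ref{lem:dptimeshift}, this yields the bound on every maximal interval of continuity of an RCLL $\z$.

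The third step is to paste these interval bounds together by controlling $\phi$ across each discontinuity of $\z$. At a jump time $t^*$, condition 4 of Definition \ref{def:dp} gives $\eta(t^*)-\eta(t^*-)\in\spaan[d(\alpha,\z(t^*))]$, and the requirement $\phi(t^*)\in\hyper_{\z(t^*)}$ then forces
$$\phi(t^*)=\proj_{\z(t^*)}^\alpha\bigl(\phi(t^*-)+\psi(t^*)-\psi(t^*-)\bigr)$$
by the uniqueness in Lemma \ref{lem:projx}. This yields $|\phi(t^*)|\leq \|\proj_{\z(t^*)}^\alpha\|\cdot\bigl(|\phi(t^*-)|+2\sup_{s\leq t^*}|\psi(s)|\bigr)$, a jump-control bound whose constant is uniform in $x\in G$ by Assumptions \ref{ass:independent} and \ref{ass:setB} and a compactness argument.

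The main obstacle I anticipate is the concatenation of these bounds across the potentially countable set of jump times of $\z$, since a naive induction would cause the Lipschitz constant to accumulate with each jump. The resolution should rely on the fact that the operator norms $\|\proj_x^\alpha\|$ admit a single geometric upper bound depending only on $\alpha$, and that on every continuity interval the Lipschitz constant from the continuous-case estimate is this same geometric constant, so a telescoping argument combined with uniformity produces a single $\lip_\dm(\alpha)$ valid for all $\z\in\dr(G)$. The argument must be formulated with the global sup norm $\sup_{0\leq s\leq t}|\psi(s)|$ on the right-hand side (rather than iterating local oscillations) so that the telescoping closes, which matches the form of the statement.
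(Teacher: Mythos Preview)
Your linearity reduction (step 1) is correct and matches the setup in \cite{Lipshutz2016}. But the remainder of your plan---restrict the continuous-case result to maximal continuity intervals of $\z$, control jumps via $\proj_{\z(t^*)}^\alpha$, and concatenate---has exactly the gap you yourself flag. The projections $\proj_x^\alpha$ are oblique, so their Euclidean operator norms are in general strictly larger than $1$; your jump estimate $|\phi(t^*)|\leq\|\proj_{\z(t^*)}^\alpha\|\bigl(|\phi(t^*-)|+2\sup_{s\leq t^*}|\psi(s)|\bigr)$ therefore inflates the running bound at every jump, and an RCLL path can have countably many jumps accumulating at finite time. Saying the norms ``admit a single geometric upper bound'' does not help if that bound exceeds $1$, and there is no reason the continuous-case Lipschitz constant should equal the projection norm so as to produce a clean telescope. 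Even switching to the norm $\norm{\cdot}_B$ of Lemma~\ref{lem:projxcontract}, under which each $\proj_x^\alpha$ is a contraction, your scheme still picks up an additive $2\sup|\psi|$ contribution at every jump, so the accumulation persists.

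The paper takes a much shorter route: it observes that the proof of \cite[Theorem~5.4]{Lipshutz2016} is a direct a~priori argument (built on the set $B^\alpha$ of Assumption~\ref{ass:setB}) that never decomposes the time axis into continuity intervals, and hence carries over to $\x\in\dr_G(\R^J)$ essentially verbatim. The only place continuity of $\z$ is used in that proof is in deducing the jump relation \cite[equation~(3.5)]{Lipshutz2016} from condition~3; for discontinuous $\z$ one instead invokes condition~4 of Definition~\ref{def:dp} at that single step. No interval-by-interval pasting is needed, and no constant accumulates.
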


\begin{proof}
When $\x\in\cts_G(\R^J)$, the proposition follows from \cite[Theorem 5.4]{Lipshutz2016}. When $\x$ is not continuous, the proof follows exactly analogously to the proof of \cite[Theorem 5.4]{Lipshutz2016} except that \cite[equation (3.5)]{Lipshutz2016} does not follow from condition 3 of the derivative problem, but rather is due to condition 4 of the derivative problem.
\end{proof}

For the following, let $\U$ denote the nonsmooth part of the boundary; that is,
	\be\label{eq:nonsmooth}\U\doteq\lcb x\in\partial G:|\allN(x)|\geq 2\rcb.\ee

\begin{prop}\label{prop:dp}
Let $\alpha\in U$. Given $\x\in\cts_G(\R^J)$ suppose the solution $(\z,\y)$ to the ESP for $\x$ satisfies the boundary jitter property (Definition \ref{def:jitter}). Then given $\psi\in\dr(\R^J)$ there exists a unique solution $(\phi,\eta)$ to the derivative problem along $\z$ for $\psi$. Furthermore, $\phi$ is continuous at times $t>0$ that $\z(t)\in G^\circ\cup\U$.
\end{prop}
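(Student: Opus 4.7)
The plan is to establish \textbf{uniqueness} from the Lipschitz bound of Proposition \ref{prop:dmlip}, construct \textbf{existence} by reducing to the continuous case of \cite[Theorem 5.4]{Lipshutz2016} via a jump-by-jump construction using the linear projection from Lemma \ref{lem:projx}, and prove the \textbf{continuity} statement using the boundary jitter property together with the transversality $\R^J = \hyper_x \oplus \spaan[d(\alpha,x)]$ that is implicit in Lemma \ref{lem:projx}.

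Uniqueness follows immediately: if $(\phi_1,\eta_1)$ and $(\phi_2,\eta_2)$ both solve the derivative problem along $\z$ for the same $\psi$, then Proposition \ref{prop:dmlip} with $\psi_1=\psi_2=\psi$ yields $\phi_1\equiv\phi_2$, and $\eta_1\equiv\eta_2$ follows from condition 1.

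For existence, the case $\psi\in\cts(\R^J)$ is precisely \cite[Theorem 5.4]{Lipshutz2016}, whose hypotheses the jitter property supplies. For general $\psi\in\dr(\R^J)$, I would first treat $\psi$ with only finitely many jumps on each compact interval, at times $0<t_1<t_2<\cdots$. On each interjump subinterval $[t_k,t_{k+1})$, I apply the continuous-input result via the time-shift formulation of Lemma \ref{lem:dptimeshift} (with initial datum $\phi(t_k)$); at each jump time $t_k$, I set
\[
\phi(t_k)\doteq\proj_{\z(t_k)}^\alpha\bigl(\phi(t_k-)+\Delta\psi(t_k)\bigr),\qquad \eta(t_k)\doteq\phi(t_k)-\psi(t_k),
\]
so that by Lemma \ref{lem:projx} the induced jump $\eta(t_k)-\eta(t_k-)$ lies in $\spaan[d(\alpha,\z(t_k))]$, verifying condition 4. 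For general $\psi\in\dr(\R^J)$ I would truncate jumps of magnitude less than $1/n$ to obtain approximants $\psi_n\to\psi$ uniformly on compacts, use Proposition \ref{prop:dmlip} to show the associated $\phi_n$ are Cauchy in the uniform norm, and verify that the limit inherits all four conditions of the derivative problem.

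For the continuity claim, fix $t_0>0$ with $\z(t_0)\in G^\circ\cup\U$. If $\z(t_0)\in G^\circ$, continuity of $\z$ provides a neighborhood of $t_0$ on which $d(\alpha,\z(\cdot))=\{0\}$; condition 3 then forces $\eta$ to be constant on this neighborhood, so $\phi=\psi+\eta$ inherits continuity from $\psi$ at $t_0$. \textbf{The main obstacle} is the case $\z(t_0)\in\U$. Here I would invoke condition 3 of the jitter property (Definition \ref{def:jitter}): for each $i\in\allN(\z(t_0))$ there is a sequence $s_k\uparrow t_0$ with $\allN(\z(s_k))=\{i\}$, along which $\ip{\phi(s_k),n_i}=0$; passing to the left limit yields $\ip{\phi(t_0-),n_i}=0$ for every $i\in\allN(\z(t_0))$, i.e., $\phi(t_0-)\in\hyper_{\z(t_0)}$. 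Combined with $\phi(t_0)\in\hyper_{\z(t_0)}$, the jump $\phi(t_0)-\phi(t_0-)$ lies in $\hyper_{\z(t_0)}$; the decomposition $\phi(t_0)-\phi(t_0-)=\Delta\psi(t_0)+(\eta(t_0)-\eta(t_0-))$ together with condition 4 also places the jump in $\spaan[d(\alpha,\z(t_0))]$ whenever $\psi$ is continuous at $t_0$ (as is the case in the intended application via Remark \ref{rmk:psib}); the direct-sum decomposition $\R^J=\hyper_{\z(t_0)}\oplus\spaan[d(\alpha,\z(t_0))]$ from Lemma \ref{lem:projx} then forces the jump to vanish. The subtle point is that the jitter property's purely qualitative control on $\allN(\z(s))$ for $s$ just below $t_0$ must be leveraged linear-algebraically against this direct-sum decomposition to annihilate both halves of the jump of $\phi$.
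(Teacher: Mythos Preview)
The paper's own proof is a two-line citation: uniqueness from Proposition~\ref{prop:dmlip}, and existence plus continuity from \cite[Lemma 8.2]{Lipshutz2016} and \cite[Theorem 3.3]{Lipshutz2016}. Your uniqueness argument matches exactly, and your continuity argument for the case $\z(t_0)\in\U$ is a correct unpacking of what those cited results contain: the jitter property forces $\phi(t_0-)\in\hyper_{\z(t_0)}$, and the transversality $\hyper_{\z(t_0)}\cap\spaan[d(\alpha,\z(t_0))]=\{0\}$ from Lemma~\ref{lem:projx} then kills the jump. You are also right to flag that this step requires $\psi$ to be continuous at $t_0$; the proposition as stated is slightly loose on this point (indeed, if $\z(t_0)\in G^\circ$ and $\psi$ jumps there, $\eta$ is locally constant and $\phi$ must jump), though in every application in the paper $\psi$ is continuous at the relevant times.

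The existence portion has two issues. First, a misattribution: \cite[Theorem 5.4]{Lipshutz2016} is the Lipschitz estimate (it is what underlies Proposition~\ref{prop:dmlip}), not an existence result; existence is \cite[Theorem 3.3]{Lipshutz2016}. Second, and more substantively, that existence theorem in \cite{Lipshutz2016} already covers $\psi\in\dr(\R^J)$ with continuous $\z$, so your entire program of first treating continuous $\psi$, then finitely many jumps, then approximating general $\psi$ by truncating small jumps is unnecessary. It is also not fully justified: the claim that removing all jumps of magnitude below $1/n$ yields approximants converging \emph{uniformly} on compacts is delicate for c\`adl\`ag functions whose jump series is not absolutely summable, and you would need to argue this carefully (or replace it with a different density argument compatible with the uniform-norm Lipschitz bound of Proposition~\ref{prop:dmlip}). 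The cleaner route, and the one the paper takes, is simply to invoke the cited result directly.
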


\begin{proof}
Uniqueness follows from Proposition \ref{prop:dmlip}. Existence and the continuity property follows from \cite[Lemma 8.2]{Lipshutz2016} and \cite[Theorem 3.3]{Lipshutz2016}.
\end{proof}

We close this section with the following continuity property of the derivative map that will be instrumental in the proof of our main result. In addition to its use in this work, it is also used in \cite{Lipshutz2017b} to show that the joint reflected diffusion and derivative process is Feller continuous. In contrast to the other results in this section, we explicitly state exactly which assumptions are required for the following theorem. Recall that we have equipped $\dr(\R^J)$ with the Skorokhod $J_1$-topology.

\begin{theorem}\label{thm:dmcontinuous}
Suppose Assumptions \ref{ass:setB} and \ref{ass:projection} hold. Given $\x\in\cts_G(\R^J)$ suppose the solution $(\z,\y)$ to the ESP for $\x$ satisfies the boundary jitter property (Definition \ref{def:jitter}). Let $\{\x_\ellindex\}_{\ellindex\in\N}$ be a sequence in $\dr(\R^J)$ such that $\x_\ellindex$ converges to $\x$ in $\dr(\R^J)$ as $\ellindex\to\infty$ and for each $\ellindex\in\N$, let $(\z_\ellindex,\y_\ellindex)$ denote the solution to the ESP for $\x_\ellindex$. Suppose $\psi\in\cts(\R^J)$ satisfies $\psi(0)\in \hyper_{\z(0)}$ and $\{\psi_\ellindex\}_{\ellindex\in\N}$ is a sequence in $\dr(\R^J)$ such that $\psi_\ellindex$ converges to $\psi$ in $\dr(\R^J)$ as $\ellindex\to\infty$. Then $\dm_{\z_\ellindex}^\alpha(\psi_\ellindex)$ converges to $\dm_\z^\alpha(\psi)$ in $\dr(\R^J)$ as $\ellindex\to\infty$.
\end{theorem}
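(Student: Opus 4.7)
\medskip

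\noindent\textbf{Proof plan for Theorem \ref{thm:dmcontinuous}.}

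The plan is to combine uniform compactness of the candidate limits with identification via the uniqueness in Proposition \ref{prop:dp}. First I would collapse the $J_1$-topology to something more tractable at the input level: since $\x \in \cts_G(\R^J)$ is continuous and $\x_\ellindex \to \x$ in $\dr(\R^J)$, standard properties of $J_1$ imply $\x_\ellindex \to \x$ uniformly on compacts; by the Lipschitz continuity of the ESM (Proposition \ref{prop:sp}(ii)) this yields $\z_\ellindex \to \z$ and $\y_\ellindex \to \y$ uniformly on compacts. Similarly, since $\psi$ is continuous, $\psi_\ellindex \to \psi$ uniformly on compacts. Let $(\phi_\ellindex,\eta_\ellindex)$ solve the derivative problem along $\z_\ellindex$ for $\psi_\ellindex$, and let $(\phi,\eta)$ be the unique solution of the derivative problem along $\z$ for $\psi$, whose existence is given by Proposition \ref{prop:dp} using the boundary jitter property.

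Next I would prove relative compactness of $\{\phi_\ellindex\}$ in $\dr(\R^J)$ with the $J_1$-topology. The uniform bound from Proposition \ref{prop:dmlip}, $\sup_{s\le t}|\phi_\ellindex(s)|\le \lip_\dm(\alpha)\sup_{s\le t}|\psi_\ellindex(s)|$, gives uniform boundedness on compact intervals. For a $J_1$-modulus bound, I would use the axiomatic structure of the derivative problem and the time-shift property (Lemma \ref{lem:dptimeshift}) together with the uniform convergence $\z_\ellindex\to\z$ to transfer oscillation control for $\psi_\ellindex$ and $\z_\ellindex$ into oscillation control for $\phi_\ellindex$. Concretely, between successive returns of $\z$ to $\partial G$, the boundary jitter property ensures that $\z$ spends time in $G^\circ$ on intervals where $\phi$ can only vary through $\psi$; on these intervals $\phi_\ellindex$ inherits nearly the same behavior because $\z_\ellindex$ is uniformly close to $\z$, and near the boundary the jumps of $\phi_\ellindex$ have uniformly bounded magnitude by the Lipschitz bound. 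This produces a $J_1$-tight family, so every subsequence has a further convergent subsequence with some limit $\tilde\phi$.

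Then I would identify any such subsequential limit $\tilde\phi$ as $\phi$ by verifying that $\tilde\phi$ (with $\tilde\eta \doteq \tilde\phi - \psi$) is a solution to the derivative problem along $\z$ for $\psi$; uniqueness via Proposition \ref{prop:dmlip} then forces $\tilde\phi = \phi$ and promotes subsequential convergence to full-sequence convergence. Conditions (1), (3), and (4) of Definition \ref{def:dp} are preserved under $J_1$-limits using that $\z_\ellindex \to \z$ uniformly on compacts (so the cones $d(\alpha,\z_\ellindex(u))$ are asymptotically contained in small neighborhoods of the cones $d(\alpha,\z(u))$) and that $\psi_\ellindex \to \psi$ uniformly on compacts. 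The crux is condition (2), i.e., $\tilde\phi(t) \in \hyper_{\z(t)}$ for all $t\ge 0$. At times $t$ with $\z(t) \in G^\circ$, $\hyper_{\z(t)} = \R^J$ and there is nothing to show. At times $t$ with $\z(t)\in\partial G$, the boundary jitter property (specifically conditions 3 and 4 together with condition 2$'$) produces approach sequences $s_k \uparrow t$ with $\allN(\z(s_k))$ equal to each index $i\in\allN(\z(t))$, and at those times the constraint $\phi_\ellindex(s_k) \in \{y: \langle y, n_i\rangle =0\}$ passes to the limit because $\phi_\ellindex \to \tilde\phi$ at continuity points of $\tilde\phi$; taking right-limits gives $\tilde\phi(t) \in \hyper_{\z(t)}$.

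The main obstacle will be the identification argument at boundary times, and within that, the matching of jump times of $\phi_\ellindex$ (which occur at the possibly-irregular boundary hitting times of the discontinuous $\z_\ellindex$) with those of $\phi$. The boundary jitter property is precisely what saves us: it ensures that the hitting times of $\z$ at $\partial G$ do not accumulate on positive-measure sets (condition 2$'$) and that each active face can be approached via the smooth part of the boundary (conditions 3,4), so the local structure of $\hyper_{\z(\cdot)}$ and $d(\alpha,\z(\cdot))$ near each jump of $\phi$ can be read off from interior approach sequences stable under uniform convergence of $\z_\ellindex$. Executing this identification rigorously — transferring the hyperplane and cone constraints across the $J_1$-limit while accounting for simultaneous time deformations of $\x_\ellindex$, $\psi_\ellindex$, and $\z_\ellindex$ — is the technical heart of the argument and is where the stronger condition 2$'$ of Definition \ref{def:jitter} (as opposed to the weaker condition 2 from \cite{Lipshutz2017}) is used.
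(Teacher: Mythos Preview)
Your compactness-plus-identification strategy is a natural idea, but it differs fundamentally from the paper's proof and, as written, has a genuine gap.

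The paper does \emph{not} argue via subsequential limits. Instead it (i) reduces (using the Lipschitz bound of Proposition~\ref{prop:dmlip}) to the case $\psi_\ell\equiv\psi$, (ii) further reduces to simple inputs $\psi\in\simple^{\z}(\R^J)$ using a density lemma, and then (iii) for simple $\psi$ constructs explicit time-changes $\lambda_\ell$ by tracking the successive face-hitting times of $\z$ and of $\z_\ell$. The key point is that condition~1 of the jitter property forces $\y_\ell$ to be nonconstant near each boundary hit of $\z$, so $\z_\ell$ actually hits the \emph{same} face at a nearby time; this gives $\proj_{\z_\ell(\rho^\ell_j(k))}=\proj_{\z(\rho_j(k))}$, hence $\dm_{\z_\ell}(\psi)\circ\lambda_\ell=\dm_{\z}(\psi)$ exactly on compacts contained in $[0,\theta_2)$. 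An induction on $n$ (the number of faces meeting at $\z(\theta_n)$), using the contraction property of the projections $\proj_x$ under an adapted norm (Lemma~\ref{lem:projxcontract}) to control behavior near nonsmooth boundary points, extends this to all of $[0,\infty)$.

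Your proposal breaks at the identification of condition~(2). You write that ``at those times the constraint $\phi_\ell(s_k)\in\{y:\langle y,n_i\rangle=0\}$ passes to the limit,'' but $\phi_\ell(s_k)$ lies in $\hyper_{\z_\ell(s_k)}$, not $\hyper_{\z(s_k)}$, and $\z_\ell(s_k)$ need not lie on $F_i$ just because $\z(s_k)$ does. Bridging this requires exactly the argument the paper makes: one must show that $\z_\ell$ genuinely hits $F_i$ near $s_k$, which uses condition~1 of the jitter property (nonconstancy of $\y$, hence of $\y_\ell$, near boundary hits) and is not a consequence of uniform convergence alone. A second, related gap is in the $J_1$-tightness step: near a time $\theta_n$ where $|\allN(\z(\theta_n))|\ge 2$, the path $\z$ (and hence each $\z_\ell$) can hit alternating faces infinitely often, producing infinitely many jumps of $\phi_\ell$ accumulating at $\theta_n$. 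Controlling the $J_1$-modulus there uniformly in $\ell$ requires the contraction estimate for iterated projections (Lemma~\ref{lem:projxcontract}); without it your modulus argument is incomplete. Once you supply both ingredients you are essentially reconstructing the paper's inductive time-change proof, so the compactness framing does not actually shortcut the work.
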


The proof of Theorem \ref{thm:dmcontinuous} is deferred to Section \ref{sec:dmcontinuous}. 

\section{Convergence of the Euler scheme for reflected diffusions}\label{sec:EulerRDproof}

For $\Delta>0$ recall the sequence $\{t_n^\Delta\}_{n\in\N_0}$ in $\R_+$ defined in \eqref{eq:tnDelta}, define the RCLL step function $\rho^\Delta:\R_+\mapsto\R_+$ by 
	\be\label{eq:rhoDelta}\rho^\Delta(t)=t_n^\Delta,\qquad t\in[t_n^\Delta,t_{n+1}^\Delta),\qquad n\in\N_0,\ee
recall the definition of the discrete filtration $\{\F_t^\Delta\}$ given in \eqref{eq:FtDelta}, and define the piecewise constant RCLL $\{\F_t^\Delta\}$-adapted process $W^\Delta=\{W^\Delta(t),t\geq0\}$ on $(\Omega,\F,\P)$ by
	\be\label{eq:WDelta}W^\Delta(t)=W(t_n^\Delta),\qquad t\in[t_n^\Delta,t_{n+1}^\Delta),\qquad n\in\N_0.\ee
Observe that $\rho^\Delta$ and $W^\Delta$ are constant on the interval $[t_n^\Delta,t_{n+1}^\Delta)$ for each $n\in\N_0$ and
\begin{align}\label{eq:rhoDeltatnDelta}
	\rho^\Delta(t_n^\Delta)-\rho^\Delta(t_n^\Delta-)&=\Delta,\\ \label{eq:WDeltatnDelta}
	W^\Delta(t_n^\Delta)-W^\Delta(t_n^\Delta-)&=\delta_n^\Delta W,
\end{align}
for each $n\in\N$, where $\delta_n^\Delta W$ is defined as in \eqref{eq:deltanDelta}. Recall the piecewise constant RCLL $\{\F_t^\Delta\}$-adapted processes $(\Z^{\alpha,\Delta},\L^{\alpha,\Delta})$ taking values in $G\times\R_+^N$ defined in \eqref{eq:ZLDelta0}--\eqref{eq:XinalphaDelta}. By \eqref{eq:ZLDelta0}--\eqref{eq:XinalphaDelta} and \eqref{eq:tnDelta}, we have, for each $n\in\N$,
\begin{align*}
	\Z^{\alpha,\Delta}(t_n^\Delta)&=Z^{\alpha,\Delta}(t_0^\Delta)+\sum_{j=1}^n\left(\Z^{\alpha,\Delta}(t_j^\Delta)-\Z^{\alpha,\Delta}(t_{j-1}^\Delta)\right)\\
	&=x_0(\alpha)+\sum_{j=1}^n\left(\Xi_j^{\alpha,\Delta}-\Z^{\alpha,\Delta}(t_{j-1}^\Delta)+\pi^\alpha(\Xi_j^{\alpha,\Delta})-\Xi_j^{\alpha,\Delta}\right)\\
	&=x_0(\alpha)+\sum_{j=1}^nb(\alpha,\Z^{\alpha,\Delta}(t_{j-1}^\Delta))\Delta+\sum_{j=1}^n\sigma(\alpha,\Z^{\alpha,\Delta}(t_{j-1}^\Delta))\delta_j^\Delta W+R(\alpha)\sum_{j=1}^n\xi^\alpha(\Xi_j^{\alpha,\Delta})\\
	&=x_0(\alpha)+\sum_{j=1}^nb(\alpha,\Z^{\alpha,\Delta}(t_{j-1}^\Delta))\Delta+\sum_{j=1}^n\sigma(\alpha,\Z^{\alpha,\Delta}(t_{j-1}^\Delta))\delta_j^\Delta W+R(\alpha)\L^{\alpha,\Delta}(t_n^\Delta),
\end{align*}
where the third equality uses the fact that, by Lemma \ref{lem:projdecomp}, $R(\alpha)\xi^\alpha(\Xi_j^{\alpha})=\pi^\alpha(\Xi_j^{\alpha,\Delta})-\Xi_j^{\alpha,\Delta}$ for each $j\in\N$. Since $\Z^{\alpha,\Delta}$, $\L^{\alpha,\Delta}$, $\rho^\Delta$ and $W^\Delta$ are constant on intervals of the form $[t_n^\Delta,t_{n+1}^\Delta)$ for $n\in\N_0$, it follows from \eqref{eq:ZLDelta0}, \eqref{eq:rhoDeltatnDelta}, \eqref{eq:WDeltatnDelta} and the last display that, for all $t\geq0$,
	\be\label{eq:Zh}\Z^{\alpha,\Delta}(t)=x_0(\alpha)+\int_0^tb(\alpha,\Z^{\alpha,\Delta}(s-))d\rho^\Delta(s)+\int_0^t\sigma(\alpha,\Z^{\alpha,\Delta}(s-))d\bm^\Delta(s)+R(\alpha)\L^{\alpha,\Delta}(t).\ee
In addition, by \eqref{eq:ZLDelta0}--\eqref{eq:Znalphah} and Lemma \ref{lem:projdecomp}, we see that a.s.\ $\L^{\alpha,\Delta}(0)=0$ and for each $i\in\allN$, the $i$th component of $\L^{\alpha,\Delta}$ is nondecreasing and can only increase when $\Z^{\alpha,\Delta}$ lies in $F_i$. 

\begin{remark}\label{rmk:XDelta}
Define the $J$-dimensional piecewise constant RCLL $\{\F_t^\Delta\}$-adapted process $\X^{\alpha,\Delta}=\{\X^{\alpha,\Delta}(t),t\geq0\}$ by
	\be\label{eq:Xh}\X^{\alpha,\Delta}(t)\doteq x_0(\alpha)+\int_0^tb(\alpha,\Z^{\alpha,\Delta}(s-))d\rho^\Delta(s)+\int_0^t\sigma(\alpha,\Z^{\alpha,\Delta}(s-))d\bm^\Delta(s),\qquad t\geq0.\ee
Then by the properties of $(\Z^{\alpha,\Delta},\L^{\alpha,\Delta})$ stated above and Definition \ref{def:sp}, we see that $(\Z^{\alpha,\Delta},\L^{\alpha,\Delta})$ is a solution to the SP $\{(d_i(\alpha),n_i,c_i),i\in\allN\}$ for $\X^{\alpha,\Delta}$.
\end{remark}

We now prove the convergence of the Euler scheme for the reflected diffusion. The argument is analogous to the one given in \cite{Slominski2001} for reflected diffusions in convex polyhedral domains with normal reflection. Given a function $f:\R_+\mapsto\R^J$ and $0<\Delta<t$, define
\be\label{eq:osc}\text{Osc}\lb f,\Delta,t\rb\doteq\sup\lcb|f(u)-f(s)|:s,u\in[0,t],|u-s|\leq\Delta\rcb.\ee

\begin{lem}[{\cite[Lemma A.4]{Slominski2001}}]
\label{lem:oscbmzero}
For each $\alpha\in U$, $p\geq2$ and $t<\infty$,
	$$\E\lsb|\osc\lb\bm,\Delta,t\rb|^p\rsb=O\lb\lb \Delta\log \frac{1}{\Delta}\rb^\frac{p}{2}\rb\qquad\text{as }\Delta\downarrow0.$$
\end{lem}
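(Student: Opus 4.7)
The plan is to reduce $\osc(W,\Delta,t)$ to a maximum of local Brownian increments over a grid of $O(1/\Delta)$ points, and then apply a sub-Gaussian maximal inequality. Specifically, set $N\doteq\lceil t/\Delta\rceil+1$ and $t_k\doteq k\Delta$ for $k=0,1,\dots,N$. For any $s,u\in[0,t]$ with $|u-s|\leq\Delta$, both points lie in a common interval $[t_k,t_{k+2}]$, so by the triangle inequality
$$\osc(W,\Delta,t)\leq 2\max_{0\leq k<N}Y_k,\qquad Y_k\doteq\sup_{0\leq r\leq 2\Delta}|W(t_k+r)-W(t_k)|.$$

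By Brownian scaling each $Y_k$ is distributed as $\sqrt{2\Delta}\,Y$, where $Y\doteq\sup_{0\leq r\leq 1}|W(r)|$, and the reflection principle yields the sub-Gaussian tail bound $\P(Y>x)\leq 4e^{-x^2/2}$ for $x>0$. Combining this with a union bound gives, for any $x^*>0$,
$$\E\lsb\max_{0\leq k<N}(Y_k/\sqrt{2\Delta})^p\rsb\leq (x^*)^p+\int_{x^*}^\infty px^{p-1}\cdot 4Ne^{-x^2/2}\,dx.$$
Choosing $x^*=2\sqrt{\log N}$ makes the first term $O((\log N)^{p/2})$; for $x\geq x^*$ we have $Ne^{-x^2/2}\leq e^{-x^2/4}$, so the remaining integral is bounded by a convergent Gaussian-type integral that is itself $o(1)$ as $N\to\infty$. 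Thus $\E[\max_k Y_k^p]\leq C_p'(2\Delta)^{p/2}(\log N)^{p/2}$, and since $\log N=\log(1/\Delta)+O(1)$ as $\Delta\downarrow 0$, we conclude
$$\E\lsb|\osc(W,\Delta,t)|^p\rsb\leq 2^p\E\lsb\max_{0\leq k<N}Y_k^p\rsb=O\lb(\Delta\log(1/\Delta))^{p/2}\rb.$$

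The main obstacle, such as there is one, is calibrating the cutoff $x^*$ to extract the sharp $(\log N)^{p/2}$ factor: taking $x^*$ too small wastes the tail decay, while taking it too large loses the polynomial in $\log N$ on the mass inside $[0,x^*]$. The choice $x^*=2\sqrt{\log N}$ is precisely what balances the union-bound deficit $N$ against the Gaussian decay $e^{-x^2/2}$. Since the result is a standard modulus-of-continuity estimate for Brownian motion (and is cited from \cite{Slominski2001} here), no additional ingredients beyond Brownian scaling, the reflection principle, and this elementary maximal bound are required.
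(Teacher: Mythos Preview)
Your argument is correct. The paper does not give its own proof of this lemma; it simply cites \cite[Lemma A.4]{Slominski2001} as a known result, so there is nothing to compare against on the paper's side. Your grid reduction plus sub-Gaussian maximal inequality is exactly the standard route to this modulus-of-continuity bound, and the calibration $x^*=2\sqrt{\log N}$ is the right one. One minor point: $W$ here is $K$-dimensional, while your reflection-principle bound is stated for a one-dimensional Brownian motion; this is harmless, since applying your bound coordinatewise and using $|W(u)-W(s)|\leq\sum_{k=1}^K|W^k(u)-W^k(s)|$ (or an equivalent tail bound for the Euclidean norm of a $K$-dimensional Gaussian) absorbs the dimension into the constant.
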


\begin{prop}\label{prop:Xhconverge}
	For each $\alpha\in U$, $p\geq2$ and $t<\infty$,
	$$\E\lsb\sup_{0\leq s\leq t}|\X^{\alpha,\Delta}(s)-\X^\alpha(s)|^p\rsb=O\lb\lb \Delta\log \frac{1}{\Delta}\rb^\frac{p}{2}\rb\qquad\text{as }\Delta\downarrow0.$$
\end{prop}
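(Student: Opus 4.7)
The plan is to reduce the bound to an $L^p$-Gronwall argument on $\E[\sup_{0\le u\le s}|\X^{\alpha,\Delta}(u)-\X^\alpha(u)|^p]$, after extracting oscillation-type remainder terms of the desired order. First, by Remarks~\ref{rmk:X} and~\ref{rmk:XDelta}, the pairs $(\Z^\alpha,\L^\alpha)$ and $(\Z^{\alpha,\Delta},\L^{\alpha,\Delta})$ are the solutions to the SP $\{(d_i(\alpha),n_i,c_i),i\in\allN\}$ for $\X^\alpha$ and $\X^{\alpha,\Delta}$ respectively. Proposition~\ref{prop:sp}(i) then yields
$$\sup_{0\le s\le t}|\Z^{\alpha,\Delta}(s)-\Z^\alpha(s)|\;\le\;\lip_\sm(\alpha)\sup_{0\le s\le t}|\X^{\alpha,\Delta}(s)-\X^\alpha(s)|,$$
so it suffices to control the right-hand side in $L^p$.

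Using the identities $\int_0^s f(u-)\,d\rho^\Delta(u)=\int_0^{\rho^\Delta(s)}f(\rho^\Delta(u))\,du$ and $\int_0^s f(u-)\,d\bm^\Delta(u)=\int_0^{\rho^\Delta(s)}f(\rho^\Delta(u))\,d\bm(u)$, I would decompose
\begin{align*}
\X^{\alpha,\Delta}(s)-\X^\alpha(s)
&=\int_0^{\rho^\Delta(s)}\!\!\bigl[b(\alpha,\Z^{\alpha,\Delta}(\rho^\Delta(u)))-b(\alpha,\Z^\alpha(u))\bigr]du-\int_{\rho^\Delta(s)}^{s}\!\!b(\alpha,\Z^\alpha(u))\,du\\
&\quad+\int_0^{\rho^\Delta(s)}\!\!\bigl[\sigma(\alpha,\Z^{\alpha,\Delta}(\rho^\Delta(u)))-\sigma(\alpha,\Z^\alpha(u))\bigr]d\bm(u)-\int_{\rho^\Delta(s)}^{s}\!\!\sigma(\alpha,\Z^\alpha(u))\,d\bm(u).
\end{align*}
In the two ``main'' terms I further split $\Z^{\alpha,\Delta}(\rho^\Delta(u))-\Z^\alpha(u)=[\Z^{\alpha,\Delta}(\rho^\Delta(u))-\Z^\alpha(\rho^\Delta(u))]+[\Z^\alpha(\rho^\Delta(u))-\Z^\alpha(u)]$, then apply the Lipschitz continuity of $b(\alpha,\cdot),\sigma(\alpha,\cdot)$ from Assumption~\ref{ass:coefficients}; the first piece is absorbed into the Gronwall iterate via the display above, while the second contributes a term bounded in $L^p$ by $\E[|\osc(\Z^\alpha,\Delta,t)|^p]$. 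For the two ``remainder'' terms, since $s-\rho^\Delta(s)\le\Delta$, and since the linear growth of $b,\sigma$ (from the bounded Jacobians in Assumption~\ref{ass:coefficients}) together with standard moment estimates on $\sup_{0\le u\le t}|\Z^\alpha(u)|$ is in force, Jensen and BDG each yield an $O(\Delta^{p/2})$ contribution.

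To control $\E[|\osc(\Z^\alpha,\Delta,t)|^p]$ I would apply Proposition~\ref{prop:sp}(i) on time-shifted SPs, comparing $\Z^\alpha(s+\cdot)$ on $[0,\Delta]$ with the constant SP-solution equal to $\Z^\alpha(s)$, to obtain $\osc(\Z^\alpha,\Delta,t)\le\lip_\sm(\alpha)\osc(\X^\alpha,\Delta,t)$. The modulus $\osc(\X^\alpha,\Delta,t)$ is in turn controlled by $C\Delta$ from the drift integral and by a BDG application to the martingale part, which reduces it to $\osc(\bm,\Delta,t)$, yielding $\E[|\osc(\Z^\alpha,\Delta,t)|^p]=O((\Delta\log(1/\Delta))^{p/2})$ via Lemma~\ref{lem:oscbmzero}. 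Combining all pieces, applying Jensen to the deterministic integral, BDG to the stochastic one, and then taking $\sup_{0\le u\le s}$ and expectations, produces an inequality of the form
$$\E\bigl[\textstyle\sup_{0\le u\le s}|\X^{\alpha,\Delta}(u)-\X^\alpha(u)|^p\bigr]\;\le\;C\!\int_0^s\!\E\bigl[\textstyle\sup_{0\le r\le u}|\X^{\alpha,\Delta}(r)-\X^\alpha(r)|^p\bigr]du+C\bigl(\Delta\log\tfrac{1}{\Delta}\bigr)^{p/2},$$
to which Gronwall's lemma delivers the claim.

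The main technical nuisance is the interaction of the $s$-dependent upper limit $\rho^\Delta(s)$ with the supremum in $s$ when bounding the ``remainder'' stochastic integral: one resolves this by enlarging the integration domain (at the cost of a modulus-of-continuity contribution that is already controlled) before invoking BDG, or by applying a stopping-time BDG estimate directly. Once this bookkeeping is carried out, the logarithmic rate $(\Delta\log(1/\Delta))^{p/2}$ is inherited cleanly from Lemma~\ref{lem:oscbmzero}, and the argument closes along the same lines as \cite[Theorem 3.2(i)]{Slominski2001}, with the oblique-reflection case handled entirely through the Lipschitz constant $\lip_\sm(\alpha)$.
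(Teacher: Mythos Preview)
Your proposal is correct and follows essentially the same approach as the paper. The paper's proof is extremely terse---it simply invokes \cite[equations (3.3)--(3.4)]{Slominski2001} to obtain the key inequality
\[
\E\Bigl[\sup_{0\le u\le t}|\X^{\alpha,\Delta}(u)-\X^\alpha(u)|^p\Bigr]\le C\Bigl(\Delta\log\tfrac{1}{\Delta}\Bigr)^{p/2}+C\int_0^t\E\Bigl[\sup_{0\le u\le s}|\Z^{\alpha,\Delta}(u)-\Z^\alpha(u)|^p\Bigr]ds,
\]
then closes with the Lipschitz bound from Proposition~\ref{prop:sp}(i) and Gronwall, exactly as you do; your decomposition and oscillation estimates are precisely the content of Slominski's cited display, so you have simply unpacked what the paper leaves as a reference.
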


\begin{proof}
Fix $\alpha\in U$ and $p\geq2$. The proof is analogous to the proof of \cite[Theorem 3.2]{Slominski2001}, which assumes normal reflection along the boundary. In particular, following \cite[equations (3.3)--(3.4)]{Slominski2001}, there exists a constant $C<\infty$ such that, for all $t\geq0$,
\begin{align}\label{eq:XDeltaXbound}
	\E\left[\sup_{0\leq u\leq t}|\X^{\alpha,\Delta}(u)-\X^\alpha(u)|^p\right]&\leq C\left(\Delta\log\frac{1}{\Delta}\right)^{\frac{p}{2}}+C\int_0^t\E\left[\sup_{0\leq u\leq s}|\Z^{\alpha,\Delta}(u)-\Z^\alpha(u)|^p\right]ds.
\end{align}
Due to the facts that $\Z^{\alpha,\Delta}=\sm^\alpha(\X^{\alpha,\Delta})$ and $\Z^\alpha=\sm^\alpha(\X^\alpha)$ by Remarks \ref{rmk:X} and \ref{rmk:XDelta}, and the Lipschitz continuity of the SM shown in Proposition \ref{prop:sp}(i), we have, for $t\geq0$,
\begin{align*}
	\E\left[\sup_{0\leq u\leq t}|\Z^{\alpha,\Delta}(u)-\Z^\alpha(u)|^p\right]&\leq \lip_\sm^p\E\left[\sup_{0\leq u\leq t}|\X^{\alpha,\Delta}(u)-\X^\alpha(u)|^p\right].
\end{align*}
Substituting the last inequality into the integrand on the right hand side of \eqref{eq:XDeltaXbound} and applying Gronwall's inequality yields
	$$\E\left[\sup_{0\leq u\leq t}|\X^{\alpha,\Delta}(u)-\X^\alpha(u)|^p\right]\leq C\left(\Delta\log\frac{1}{\Delta}\right)^{\frac{p}{2}}\exp\left(C\lip_\sm^pt\right),\qquad t\geq0,$$
which completes the proof of the lemma.
\end{proof}

\begin{proof}[Proof of Theorem \ref{thm:EulerRD}]
Recall that $\alpha\in U$ is given. Fix $p\geq2$. Recall that $(\Z^\alpha,\L^\alpha)$ is a solution to the SP $\{(d_i(\alpha),n_i,c_i),i\in\allN\}$ for $\X^\alpha$ by Remark \ref{rmk:X} and $(\Z^{\alpha,\Delta},\L^{\alpha,\Delta})$ is a solution to the SP $\{(d_i(\alpha),n_i,c_i),i\in\allN)\}$ for $\X^{\alpha,\Delta}$ by Remark \ref{rmk:XDelta}. The theorem then it follows from the Lipschitz continuity of the SM (Proposition \ref{prop:sp}(i)) and Proposition \ref{prop:Xhconverge}.
\end{proof}

\section{Convergence of the Euler scheme for the derivative process}\label{sec:approx}

For $\Delta>0$ recall the sequence $\{t_n^\Delta\}_{n\in\N_0}$ in $\R_+$ defined in \eqref{eq:tnDelta}, the RCLL step function $\rho^\Delta$ defined in \eqref{eq:rhoDelta}, the discrete filtration $\{\F_t^\Delta\}$ defined in \eqref{eq:FtDelta}, the piecewise constant RCLL $\{\F_t^\Delta\}$-adapted process $W^\Delta$ defined in \eqref{eq:WDelta}, the piecewise constant RCLL $\{\F_t^\Delta\}$-adapted processes $(\Z^{\alpha,\Delta},\L^{\alpha,\Delta})$ taking values in $G\times\R_+^N$ defined in \eqref{eq:ZLDelta0}--\eqref{eq:XinalphaDelta}, and the piecewise constant RCLL $\{\F_t^\Delta\}$-adapted process $\phib^{\alpha,\Delta}$ taking values in $\R^{J\times M}$ defined in \eqref{eq:phibDelta0}--\eqref{eq:psibalphah}.

\subsection{Relation between the discretized derivative process and the derivative map}

Recall the definition of $\hyper_x$ given in \eqref{eq:Hx} for $x\in G$. Suppose $x_0(\alpha)\in F_i$ for some $i\in\allN$. Then, for all $\beta\in\R^M$ and $\ve > 0$ sufficiently small so that $\alpha+\ve\beta\in U$, we have
	$$\ve^{-1}\ip{x_0(\alpha+\ve\beta)-x_0(\alpha),n_i}=\ve^{-1}\ip{x_0(\alpha+\ve\beta),n_i}\geq0,$$
where the final inequality is due to the fact that $x_0(\cdot)$ takes values in $G$. Since the last display holds for all $\beta\in\R^M$, by considering limits as $\ve\to0$ from the left and right, this implies that the column vectors of $x_0'(\alpha)$ take values in $\{y\in\R^J:\ip{y,n_i}=0\}$. In particular, it follows that given $\alpha\in U$, the column vectors of $x_0'(\alpha)$ take values in $\hyper_{x_0(\alpha)}$. Thus, by \eqref{eq:tnDelta}, \eqref{eq:phibDelta0} and \eqref{eq:ZLDelta0},
	\be\label{eq:phibt0Deltahyper}\phib_m^{\alpha,\Delta}(t_0^\Delta)=[x_0'(\alpha)]_m\in\hyper_{x_0(\alpha)}=\hyper_{\Z^{\alpha,\Delta}(t_0^\Delta)},\qquad m=1,\dots,M.\ee
Here $[x_0'(\alpha)]_m$ denotes the $m$th column vector of $x_0'(\alpha)$. By \eqref{eq:phibDelta0}--\eqref{eq:mathcalXnDeltaalpha} and \eqref{eq:ZLDelta0}, we have, for each $n\in\N$,
	\be\label{eq:phibtnDeltahyper}\phib_m^{\alpha,\Delta}(t_n^\Delta)\in\hyper_{\Z^{\alpha,\Delta}(t_n)},\qquad m=1,\dots,M,\ee 
and
\begin{align}\label{eq:phibalphaDeltatnDelta}
	\phib^{\alpha,\Delta}(t_n^\Delta)&=\phib^{\alpha,\Delta}(t_0^\Delta)+\sum_{j=1}^n\left(\phib^{\alpha,\Delta}(t_j^\Delta)-\phib^{\alpha,\Delta}(t_{j-1}^\Delta)\right)\\ \notag
	&=x_0'(\alpha)+\sum_{j=1}^n\left(\mathcal{X}_j^{\alpha,\Delta}-\phib^{\alpha,\Delta}(t_{j-1}^\Delta)+\proj_{\Z^{\alpha,\Delta}(t_j^\Delta)}^\alpha\mathcal{X}_j^{\alpha,\Delta}-\mathcal{X}_j^{\alpha,\Delta}\right)\\ \notag
	&=x_0'(\alpha)+\sum_{j=1}^nb_\alpha(\alpha,\Z^{\alpha,\Delta}(t_{j-1}^\Delta))\Delta+\sum_{j=1}^nb_x(\alpha,\Z^{\alpha,\Delta}(t_{j-1}^\Delta))\phib^{\alpha,\Delta}(t_{j-1}^\Delta)\Delta\\ \notag
	&\qquad+\sum_{j=1}^n\sigma_\alpha(\alpha,\Z^{\alpha,\Delta}(t_{j-1}^\Delta))\delta_j^\Delta\bm+\sum_{j=1}^n\sigma_x(\alpha,\Z^{\alpha,\Delta}(t_{j-1}^\Delta))\phib^{\alpha,\Delta}(t_{j-1}^\Delta)\delta_j^\Delta\bm\\ \notag
	&\qquad+R'(\alpha)\sum_{j=1}^n(\L^{\alpha,\Delta}(t_j^\Delta)-\L^{\alpha,\Delta}(t_{j-1}^\Delta))+\sum_{j=1}^n\left(\proj_{\Z^{\alpha,\Delta}(t_j^\Delta)}^\alpha\mathcal{X}_j^{\alpha,\Delta}-\mathcal{X}_j^{\alpha,\Delta}\right)\\ \notag
	&=x_0'(\alpha)+\sum_{j=1}^nb_\alpha(\alpha,\Z^{\alpha,\Delta}(t_{j-1}^\Delta))\Delta+\sum_{j=1}^nb_x(\alpha,\Z^{\alpha,\Delta}(t_{j-1}^\Delta))\phib^{\alpha,\Delta}(t_{j-1}^\Delta)\Delta\\ \notag
	&\qquad+\sum_{j=1}^n\sigma_\alpha(\alpha,\Z^{\alpha,\Delta}(t_{j-1}^\Delta))\delta_j^\Delta\bm+\sum_{j=1}^n\sigma_x(\alpha,\Z^{\alpha,\Delta}(t_{j-1}^\Delta))\phib^{\alpha,\Delta}(t_{j-1}^\Delta)\delta_j^\Delta\bm\\ \notag
	&\qquad+R'(\alpha)\L^{\alpha,\Delta}(t_n^\Delta)+\etab^{\alpha,\Delta}(t_n^\Delta),
\end{align}
where $\etab^{\alpha,\Delta}=\{\etab^{\alpha,\Delta}(t),t\geq0\}$ is the piecewise constant RCLL $\{\F_t^\Delta\}$-adapted process taking values in $\R^{J\times M}$ defined by $\etab^{\alpha,\Delta}(0)=0$ and, for each $n\in\N$, $\etab^{\alpha,\Delta}(t)\doteq\etab^{\alpha,\Delta}(t_{n-1}^\Delta)$ for all $t\in[t_{n-1}^\Delta,t_n^\Delta)$ and 
	\be\label{eq:etabalphaDeltadef}\etab^{\alpha,\Delta}(t_n^\Delta)\doteq\sum_{j=1}^n\left(\proj_{\Z^{\alpha,\Delta}(t_j^\Delta)}^\alpha\mathcal{X}_j^{\alpha,\Delta}-\mathcal{X}_j^{\alpha,\Delta}\right).\ee
Since $\phib^{\alpha,\Delta}$, $\Z^{\alpha,\Delta}$, $\L^{\alpha,\Delta}$, $\rho^\Delta$, $W^\Delta$ and $\etab^{\alpha,\Delta}$ are constant on intervals fo the form $[t_n^\Delta,t_{n+1}^\Delta)$ for $n\in\N_0$, it follows from \eqref{eq:phibt0Deltahyper}, \eqref{eq:phibtnDeltahyper}, \eqref{eq:phibalphaDeltatnDelta}, \eqref{eq:phibDelta0}, \eqref{eq:rhoDelta} and \eqref{eq:WDelta} that, for all $t\geq0$,
	\be\label{eq:phibDeltahyper}\phib_m^{\alpha,\Delta}(t)\in\hyper_{\Z^{\alpha,\Delta}(t)},\qquad m=1,\dots,M,\ee
and
\begin{align}\label{eq:psibn}
	\phib^{\alpha,\Delta}(t)&=x_0'(\alpha)+\int_0^tb_\alpha(\alpha,\Z^{\alpha,\Delta}(s-))d\rho^\Delta(s)+\int_0^tb_x(\alpha,\Z^{\alpha,\Delta}(s-))\phib^{\alpha,\Delta}(s-)d\rho^\Delta(s)\\ \notag
	&\qquad+\int_0^t\sigma_\alpha(\alpha,\Z^{\alpha,\Delta}(s-))d\bm^\Delta(s)+\int_0^t\sigma_x(\alpha,\Z^{\alpha,\Delta}(s-))\phib^{\alpha,\Delta}(s-)d\bm^\Delta(s)\\ \notag
	&\qquad+R'(\alpha)\L^{\alpha,\Delta}(t)+\etab^{\alpha,\Delta}(t).
\end{align}
By \eqref{eq:etabalphaDeltadef} and Lemma \ref{lem:projx}, we see that for each $m=1,\dots,M$ and $n\in\N$,
	$$\etab_m^{\alpha,\Delta}(t_n^\Delta)-\etab_m^{\alpha,\Delta}(t_n^\Delta-)=\proj_{\Z^{\alpha,\Delta}(t_n^\Delta)}^\alpha[\mathcal{X}_n^{\alpha,\Delta}]_m-[\mathcal{X}_n^{\alpha,\Delta}]_m\in\spaan\left[d(\alpha,\Z^{\alpha,\Delta}(t_n^\Delta))\right].$$
Here $[\mathcal{X}_n^{\alpha,\Delta}]_m$ denotes the $m$th column vector of the $J\times M$ matrix $\mathcal{X}_n^{\alpha,\Delta}$. Since $\etab^{\alpha,\Delta}$ and $\Z^{\alpha,\Delta}$ are piecewise constant, it follows that, for all $m=1,\dots,M$ and $0\leq s<t<\infty$
	\be\label{eq:etabDeltats}\etab_m^{\alpha,\Delta}(t)-\etab_m^{\alpha,\Delta}(s)\in\spaan\left[\cup_{u\in(s,t]}d(\alpha,\Z^{\alpha,\Delta}(u))\right]\ee
and for all $m=1,\dots,M$ and $t>0$,
	\be\label{eq:etabDeltattminus}\etab_m^{\alpha,\Delta}(t)-\etab_m^{\alpha,\Delta}(t-)\in\spaan\left[d(\alpha,\Z^{\alpha,\Delta}(t))\right].\ee

\begin{remark}
Define the piecewise constant RCLL $\{\F_t^\Delta\}$-adapted process $\psib^{\alpha,\Delta}=\{\psib^{\alpha,\Delta}(t),t\geq0\}$ taking values in $\R^{J\times M}$, for $t\geq0$, by
\begin{align}\label{eq:psibn}
	\psib^{\alpha,\Delta}(t)&=x_0'(\alpha)+\int_0^tb_\alpha(\alpha,\Z^{\alpha,\Delta}(s-))d\rho^\Delta(s)+\int_0^tb_x(\alpha,\Z^{\alpha,\Delta}(s-))\phib^{\alpha,\Delta}(s-)d\rho^\Delta(s)\\ \notag
	&\qquad+\int_0^t\sigma_\alpha(\alpha,\Z^{\alpha,\Delta}(s-))d\bm^\Delta(s)+\int_0^t\sigma_x(\alpha,\Z^{\alpha,\Delta}(s-))\phib^{\alpha,\Delta}(s-)d\bm^\Delta(s)\\ \notag
	&\qquad+R'(\alpha)\L^{\alpha,\Delta}(t).
\end{align}
It follows from \eqref{eq:phibDeltahyper}, \eqref{eq:phibalphaDeltatnDelta}, \eqref{eq:psibn}, \eqref{eq:etabDeltats}, \eqref{eq:etabDeltattminus} that for each $m=1,\dots,M$, a.s.\ $(\phib_m^{\alpha,\Delta},\etab_m^{\alpha,\Delta})$ is a solution to the derivative problem along $\Z^{\alpha,\Delta}$ for $\psib_m^{\alpha,\Delta}$. In other words, a.s.\ 
	\be\label{eq:phibmDeltaDM}\phib_m^{\alpha,\Delta}=\dm_{\Z^{\alpha,\Delta}}(\psib_m^{\alpha,\Delta}),\qquad m=1,\dots,M.\ee
\end{remark}

\subsection{Proof of Theorem \ref{thm:approx}}

In preparation for proving Theorem \ref{thm:approx}, we first state some preliminary results. 

\begin{lem}\label{lem:supLbound}
Given $\alpha\in U$, $p\geq2$ and $t<\infty$ there exists $C_1^{(\alpha,p,t)}<\infty$ such that for all $\Delta>0$,
	\be\label{eq:LDeltax0alpha}\E\lsb\sup_{0\leq s\leq t}|\L^{\alpha,\Delta}(s)|^p\rsb\leq C_1^{(\alpha,p,t)}.\ee
\end{lem}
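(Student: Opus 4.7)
The plan is to reduce the $L^p$ bound on $\L^{\alpha,\Delta}$ to an a priori bound on the discretized driving process $\X^{\alpha,\Delta}$, and then to close that estimate by Gronwall's inequality. First I would invoke Proposition \ref{prop:sp}(i) to compare the SP solution $(\Z^{\alpha,\Delta},\L^{\alpha,\Delta})$ for $\X^{\alpha,\Delta}$ (see Remark \ref{rmk:XDelta}) with the trivial SP solution $(x_0(\alpha),0)$ corresponding to the constant input $x_0(\alpha)\in G$. This yields the pointwise bounds
\begin{equation*}
\sup_{0\leq s\leq t}|\L^{\alpha,\Delta}(s)|\vee\sup_{0\leq s\leq t}|\Z^{\alpha,\Delta}(s)-x_0(\alpha)|\leq\lip_\sm(\alpha)\sup_{0\leq s\leq t}|\X^{\alpha,\Delta}(s)-x_0(\alpha)|,
\end{equation*}
so it suffices to bound the $L^p$ norm of the right-hand side by a constant independent of $\Delta$.

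Using the representation \eqref{eq:Xh}, I would split $\X^{\alpha,\Delta}-x_0(\alpha)$ into its drift and stochastic integral parts and estimate them separately. The drift part is controlled by Jensen's inequality, since $d\rho^\Delta$ has total mass at most $t$ on $[0,t]$. For the stochastic integral, the key observation is that at the grid points $t_n^\Delta$ it agrees with the continuous $\{\F_t\}$-martingale
\begin{equation*}
M(t)\doteq\int_0^t\sigma(\alpha,\Z^{\alpha,\Delta}(\rho^\Delta(v)))d\bm(v),
\end{equation*}
and since $\X^{\alpha,\Delta}$ is piecewise constant, the supremum of the stochastic integral over $[0,s]$ is bounded by $\sup_{0\leq u\leq s}|M(u)|$. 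The continuous BDG inequality and Jensen then yield
\begin{equation*}
\E\lsb\sup_{0\leq u\leq s}|M(u)|^p\rsb\leq C_p s^{p/2-1}\int_0^s\E\lsb\norm{\sigma(\alpha,\Z^{\alpha,\Delta}(\rho^\Delta(v)))}^p\rsb dv.
\end{equation*}
By Assumption \ref{ass:coefficients}, both $b(\alpha,\cdot)$ and $\sigma(\alpha,\cdot)$ are Lipschitz in their second argument and hence of linear growth, so both integrands are dominated by $C(1+|\Z^{\alpha,\Delta}(v-)-x_0(\alpha)|^p)$ for a constant $C=C(\alpha,p,t,\lip_1)$ independent of $\Delta$.

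Combining these bounds with the first step, and rewriting the $d\rho^\Delta$-integral as an ordinary Lebesgue integral via the piecewise constancy of $\Z^{\alpha,\Delta}$, I would obtain an inequality of the form
\begin{equation*}
f(s)\leq C_1+C_2\int_0^s f(u)du,\qquad 0\leq s\leq t,
\end{equation*}
with $f(s)\doteq\E\lsb\sup_{0\leq u\leq s}|\Z^{\alpha,\Delta}(u)-x_0(\alpha)|^p\rsb$ and $C_1,C_2$ depending on $\alpha,p,t$ but not on $\Delta$. A routine induction over grid points shows that $f(s)<\infty$ for every $s$ (the Gaussian increments and Lipschitz coefficients preserve finite $p$-th moments), so Gronwall's lemma yields a $\Delta$-independent bound on $f(t)$, which combined with the first step gives \eqref{eq:LDeltax0alpha}. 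The main technical care is in the BDG estimate for the discretized integral against $d\bm^\Delta$; identifying it with the continuous martingale $M$ sampled on the grid sidesteps the need for a discrete BDG inequality and keeps the argument clean.
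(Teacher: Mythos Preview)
Your proposal is correct and follows essentially the same approach as the paper: reduce to an $L^p$ bound on $\X^{\alpha,\Delta}-x_0(\alpha)$ via the Lipschitz continuity of the Skorokhod map (Proposition \ref{prop:sp}(i)) applied against the trivial solution $(x_0(\alpha),0)$, then close the estimate by H\"older/Jensen, BDG, linear growth of the coefficients, the Lipschitz SM bound on $\Z^{\alpha,\Delta}$, and Gronwall. The paper compresses this into the phrase ``a standard argument''; your write-up simply makes the BDG step for the $d\bm^\Delta$-integral explicit by identifying it with the continuous martingale $M$ sampled on the grid, which is a clean way to handle that point.
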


\begin{proof}
Fix $\alpha\in U$, $p\geq2$ and $t<\infty$. The fact that
\begin{align}\label{eq:XDeltax0alpha}
	\E\lsb\sup_{0\leq s\leq t}|\X^{\alpha,\Delta}(s)-x_0(\alpha)|^p\rsb<\infty
\end{align}
follows from a standard argument using \eqref{eq:Xh}, H\"older’s inequality, the BDG inequalities, Tonelli's theorem, Assumption \ref{ass:coefficients}, the facts thats $\Z^{\alpha,\Delta}=\sm^\alpha(\X^{\alpha,\Delta})$ by Remark \ref{rmk:XDelta} and $\z=\sm^\alpha(\x)$ where $\z(\cdot)=\x(\cdot)\equiv x_0(\alpha)$, the Lipschitz continuity of the SM (Proposition \ref{prop:sp}(i)) and Gronwall's inequality. Then, using the fact that $(\Z^{\alpha,\Delta},\L^{\alpha,\Delta})$ is the solution to the SP for $\X^{\alpha,\Delta}$ by Remark \ref{rmk:XDelta}, the fact that $(\z,0)$ is the solution to the SP for $\x$, \eqref{eq:XDeltax0alpha} and the Lipschitz continuity of the SM, we obtain \eqref{eq:LDeltax0alpha}.
\end{proof}

\begin{lem}\label{lem:suphphibalphahbound}
For each $\alpha\in U$, $p\geq2$ and $t<\infty$, there exists $C_2^{(\alpha,p,t)}<\infty$ such that for all $\Delta>0$,
	\be\label{eq:supphibpbound}\E\lsb\sup_{0\leq s\leq t}\norm{\phib^{\alpha,\Delta}(s)}^p\rsb\leq C_2^{(\alpha,p,t)}.\ee
\end{lem}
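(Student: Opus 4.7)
The plan is to reduce the estimate on $\phib^{\alpha,\Delta}$ to an estimate on the auxiliary process $\psib^{\alpha,\Delta}$ defined in \eqref{eq:psibn}, using the Lipschitz continuity of the derivative map established in Proposition \ref{prop:dmlip}, and then close the estimate by Gronwall's inequality.

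First, I would fix $m\in\{1,\dots,M\}$ and recall from \eqref{eq:phibmDeltaDM} that $\phib_m^{\alpha,\Delta}=\dm_{\Z^{\alpha,\Delta}}^\alpha(\psib_m^{\alpha,\Delta})$, and observe that the pair $(0,0)$ is a solution to the derivative problem along $\Z^{\alpha,\Delta}$ for the trivial input $\psi\equiv0$ (each condition in Definition \ref{def:dp} is satisfied trivially). Applying Proposition \ref{prop:dmlip} with $(\phi_1,\eta_1)=(\phib_m^{\alpha,\Delta},\etab_m^{\alpha,\Delta})$ and $(\phi_2,\eta_2)=(0,0)$, I obtain a pathwise bound
\be\label{eq:plan-bound1}
\sup_{0\leq s\leq t}|\phib_m^{\alpha,\Delta}(s)|\leq\lip_\dm(\alpha)\sup_{0\leq s\leq t}|\psib_m^{\alpha,\Delta}(s)|,
\ee
where the constant $\lip_\dm(\alpha)$ depends only on $\alpha$ (in particular not on $\Delta$).

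Next, I would derive a Gronwall-type bound on $\E[\sup_{0\leq s\leq u}\|\psib^{\alpha,\Delta}(s)\|^p]$. Starting from \eqref{eq:psibn} and applying the elementary inequality $|a_1+\cdots+a_6|^p\leq 6^{p-1}\sum|a_k|^p$, Assumption \ref{ass:coefficients} (uniform boundedness of $b_\alpha,b_x,\sigma_\alpha,\sigma_x,R'$), H\"older's inequality for the finite variation integrals, and the BDG inequality applied to the stochastic integrals (which, since the integrands are piecewise constant on the partition, are honest continuous martingales with respect to $W$), together with Lemma \ref{lem:supLbound} to handle the $R'(\alpha)\L^{\alpha,\Delta}$ term, I expect to obtain an inequality of the form
\be\label{eq:plan-bound2}
\E\!\left[\sup_{0\leq s\leq u}\|\psib^{\alpha,\Delta}(s)\|^p\right]\leq C_3+C_4\int_0^u \E\!\left[\sup_{0\leq r\leq s}\|\phib^{\alpha,\Delta}(r)\|^p\right]ds,\qquad 0\leq u\leq t,
\ee
for constants $C_3,C_4<\infty$ depending on $\alpha,p,t$ but not on $\Delta$.

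Finally, combining \eqref{eq:plan-bound1} (summed over $m=1,\dots,M$ and converted to the operator norm) with \eqref{eq:plan-bound2} yields
$$\E\!\left[\sup_{0\leq s\leq u}\|\phib^{\alpha,\Delta}(s)\|^p\right]\leq C_5+C_6\int_0^u\E\!\left[\sup_{0\leq r\leq s}\|\phib^{\alpha,\Delta}(r)\|^p\right]ds,$$
and Gronwall's lemma closes the argument. The a priori finiteness of $\E[\sup_{0\leq s\leq t}\|\phib^{\alpha,\Delta}(s)\|^p]$ needed to apply Gronwall is immediate because $\phib^{\alpha,\Delta}$ takes only finitely many values on $[0,t]$, each of which is a bounded measurable functional of $(\delta_1^\Delta W,\dots,\delta_{\lfloor t/\Delta\rfloor}^\Delta W)$ with finite $p$th moment. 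The main technical obstacle is verifying that \eqref{eq:plan-bound1} really delivers a $\Delta$-uniform comparison: this relies on the fact that $\lip_\dm(\alpha)$ in Proposition \ref{prop:dmlip} is independent of the underlying path $\z$, which is crucial so that Gronwall's constant does not blow up as $\Delta\downarrow 0$.
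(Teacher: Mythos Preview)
Your proposal is correct and follows essentially the same approach as the paper's own proof: bound $\phib^{\alpha,\Delta}$ by $\psib^{\alpha,\Delta}$ via the Lipschitz continuity of the derivative map (Proposition \ref{prop:dmlip}), then estimate $\psib^{\alpha,\Delta}$ from its integral representation \eqref{eq:psibn} using H\"older, BDG, Assumption \ref{ass:coefficients}, and Lemma \ref{lem:supLbound}, and close with Gronwall. Your explicit remarks that $(0,0)$ solves the derivative problem for $\psi\equiv 0$, that $\lip_\dm(\alpha)$ is independent of the path $\z$, and that a priori finiteness holds because $\phib^{\alpha,\Delta}$ is a finite-valued polynomial in Gaussian increments, are useful clarifications that the paper leaves implicit.
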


\begin{proof}
Fix $\alpha\in U$, $p\geq2$ and $t<\infty$. Let $\Delta>0$. For brevity, let $\lip\doteq\max(\lip_1,\lip_\dm(\alpha))<\infty$. By \eqref{eq:psibn}, H\"older's inequality, the BDG inequalities, Tonelli's theorem and bounds on the coefficients stated in Assumption \ref{ass:coefficients}, 
\begin{align*}
	\E\lsb\sup_{0\leq u\leq t}\norm{\psib^{\alpha,\Delta}(u)}^p\rsb&\leq4^{p-1}\norm{x_0'(\alpha)}^p+4^{p-1}\E\left[\sup_{0\leq s\leq t}|R'(\alpha)\L^{\alpha,\Delta}(s)|^p\right]\\
	&\qquad+8^{p-1}(t^{p-1}+C_p)\lip^p\int_0^t\lb1+\E\lsb\sup_{0\leq u\leq s}\norm{\phib^{\alpha,\Delta}(u)}^p\rsb\rb ds.
\end{align*}
By \eqref{eq:phibmDeltaDM}, the Lipschitz continuity of the derivative map $\dm_{\Z^{\alpha,\Delta}}$ (Proposition \ref{prop:dmlip}), and Lemma \ref{lem:supLbound}, we have
\begin{align*}
	\E\lsb\sup_{0\leq u\leq t}\norm{\phib^{\alpha,\Delta}(u)}^p\rsb&\leq4^{p-1}\lip^p\norm{x_0'(\alpha)}^p+4^{p-1}\norm{R'(\alpha)}^pC_1^{(\alpha,p,t)}\\
	&\qquad+8^{p-1}(t^{p-1}+C_p)\lip^{2p}\int_0^t\lb1+\E\lsb\sup_{0\leq u\leq s}\norm{\phib^{\alpha,\Delta}(u)}^p\rsb\rb ds.
\end{align*}
An application of Gronwall's inequality yields \eqref{eq:supphibpbound} with 
	$$C_2^{(\alpha,p,t)}\doteq\lb1+4^{p-1}\lip^p\norm{x_0'(\alpha)}^p+4^{p-1}\norm{R'(\alpha)}^pC_1^{(\alpha,p,t)}\rb e^{8^{p-1}(t^{p-1}+C_p)t}.$$
\end{proof}

\begin{prop}\label{prop:psibnconvergence}
	For each $\alpha\in U$, $p\geq2$ and $t<\infty$,
	\be\lim_{h\downarrow0}\E\lsb\sup_{0\leq s\leq t}\norm{\psib^{\alpha,\Delta}(s)-\psib^\alpha(s)}^p\rsb=0.\ee
\end{prop}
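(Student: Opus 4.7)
The plan is to subtract the defining equations \eqref{eq:psib} and \eqref{eq:psibn} for $\psib^\alpha$ and $\psib^{\alpha,\Delta}$ term by term and close with Gronwall's inequality. The difference $\psib^{\alpha,\Delta}(t) - \psib^\alpha(t)$ splits into the constraint term $R'(\alpha)(\L^{\alpha,\Delta}(t) - \L^\alpha(t))$, four coefficient terms involving $b_\alpha$, $b_x$, $\sigma_\alpha$, $\sigma_x$, and discretization mismatch errors from replacing $ds$ by $d\rho^\Delta(s)$ and $d\bm(s)$ by $d\bm^\Delta(s)$. The constraint term vanishes in $L^p$ by Theorem \ref{thm:EulerRD}, while the mismatch errors reduce to an $O(\Delta)$ time residual or a single Brownian oscillation $\bm(t)-\bm(\rho^\Delta(t))$ controlled by Lemma \ref{lem:oscbmzero}; the uniform bounds of Lemmas \ref{lem:supLbound} and \ref{lem:suphphibalphahbound} absorb the accompanying bounded and $\phib$-carrying coefficients via H\"older or Cauchy--Schwarz.

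For the remaining coefficient terms, the $b_\alpha$ and $\sigma_\alpha$ pieces are controlled using the $\gamma$-H\"older continuity from Assumption \ref{ass:coefficients}, Theorem \ref{thm:EulerRD} in $L^{p\gamma}$, and the BDG inequality. For the $\phib$-carrying $b_x$ and $\sigma_x$ pieces I would decompose
\begin{align*}
    b_x(\alpha,\Z^{\alpha,\Delta})\phib^{\alpha,\Delta} - b_x(\alpha,\Z^\alpha)\phib^\alpha &= [b_x(\alpha,\Z^{\alpha,\Delta}) - b_x(\alpha,\Z^\alpha)]\phib^{\alpha,\Delta} \\
    &\quad + b_x(\alpha,\Z^\alpha)[\phib^{\alpha,\Delta} - \phib^\alpha],
\end{align*}
and analogously for $\sigma_x$. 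The first summand is handled as above using Lemma \ref{lem:suphphibalphahbound}, and the second reduces everything to estimating $\E[\int_0^t\|\phib^{\alpha,\Delta}(s-) - \phib^\alpha(s)\|^p\,ds]$.

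The crucial observation is that $\phib^{\alpha,\Delta} = \dm_{\Z^{\alpha,\Delta}}^\alpha(\psib^{\alpha,\Delta})$ by \eqref{eq:phibmDeltaDM} and $\phib^\alpha = \dm_{\Z^\alpha}^\alpha(\psib^\alpha)$ by Remark \ref{rmk:psib}, so inserting the intermediate process $\dm_{\Z^{\alpha,\Delta}}^\alpha(\psib^\alpha)$ and invoking Proposition \ref{prop:dmlip} column by column yields $\sup_{u\leq s}\|\phib^{\alpha,\Delta}(u) - \dm_{\Z^{\alpha,\Delta}}^\alpha(\psib^\alpha)(u)\| \leq \lip_\dm(\alpha)\sup_{u\leq s}\|\psib^{\alpha,\Delta}(u) - \psib^\alpha(u)\|$. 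Writing $F_\Delta(t)\doteq\E[\sup_{u\leq t}\|\psib^{\alpha,\Delta}(u) - \psib^\alpha(u)\|^p]$ and $I_\Delta\doteq\int_0^t\|\dm_{\Z^{\alpha,\Delta}}^\alpha(\psib^\alpha)(s-) - \phib^\alpha(s)\|^p\,ds$, collecting all estimates produces a bound of the form
\begin{align*}
    F_\Delta(t) &\leq \ve_\Delta + C_3\E[I_\Delta] + C_4\int_0^t F_\Delta(s)\,ds,
\end{align*}
with $\ve_\Delta\to 0$, so Gronwall's inequality reduces the proposition to proving $\E[I_\Delta]\to 0$.

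The main obstacle is this final step, which is precisely where the new continuity result enters. I would apply Theorem \ref{thm:dmcontinuous} columnwise with $z=\Z^\alpha$, $z_\ellindex=\Z^{\alpha,\Delta}$, and $\psi_\ellindex\equiv\psi=\psib^\alpha$; the hypotheses are verified by Proposition \ref{prop:jitter} (the boundary jitter property for $(\Z^\alpha, \Y^\alpha)$) and by the fact that the columns of $x_0'(\alpha) = \psib^\alpha(0)$ lie in $\hyper_{\Z^\alpha(0)}$ as noted before \eqref{eq:phibt0Deltahyper}. Since $\X^{\alpha,\Delta}\to\X^\alpha$ in $L^p$-sup by Proposition \ref{prop:Xhconverge}, a subsequence converges uniformly on compacts almost surely, and Theorem \ref{thm:dmcontinuous} then gives $\dm_{\Z^{\alpha,\Delta}}^\alpha(\psib^\alpha)\to\phib^\alpha$ in $\dr(\R^J)$ almost surely along that subsequence. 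Because $J_1$-convergence to an RCLL limit forces pointwise convergence at every continuity point of the limit, the integrand of $I_\Delta$ tends to zero at almost every $s$; Lipschitz continuity of the derivative map bounds it by $(2\lip_\dm(\alpha)\sup_{u\leq t}\|\psib^\alpha(u)\|)^p$, which is integrable by standard moment estimates, and two applications of dominated convergence yield $\E[I_\Delta]\to 0$ along the subsequence. A standard subsequence argument then promotes this to convergence along the full sequence $\Delta\downarrow 0$.
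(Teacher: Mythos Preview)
Your proposal is correct and follows essentially the same approach as the paper: the paper also decomposes $\psib^{\alpha,\Delta}-\psib^\alpha$ term by term (into eleven pieces \eqref{eq:term1}--\eqref{eq:term11}), inserts the intermediate $\dm_{\Z^{\alpha,\Delta}}(\psib_m^\alpha)$ via Proposition \ref{prop:dmlip}, closes with Gronwall, and isolates the residual $\E[I_\Delta]\to0$ as a separate lemma (Lemma \ref{lem:dmZalphaetabalpha}) proved exactly as you describe using Theorem \ref{thm:dmcontinuous}, the boundary jitter property, and dominated convergence. Your explicit subsequence argument for promoting $L^p$ convergence of $\X^{\alpha,\Delta}$ to the almost-sure convergence needed to invoke Theorem \ref{thm:dmcontinuous} is, if anything, slightly more careful than the paper's presentation.
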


We first show how Theorem \ref{thm:approx} and Corollary \ref{cor:approx} can be deduced from 
Proposition \ref{prop:psibnconvergence}.  
The proof of Proposition \ref{prop:psibnconvergence} is given in Section \ref{sec:psibnconvergence}, with 
preliminary results established in Section \ref{subs-useful}. 

\begin{proof}[Proof of Theorem \ref{thm:approx}]
Let $m=1,\dots,M$. By Theorem \ref{thm:EulerRD} and Proposition \ref{prop:psibnconvergence}, a.s.\ $(\Z^{\alpha,\Delta},\psib_m^{\alpha,\Delta})$ converges to $(\Z^\alpha,\psib_m^\alpha)$ uniformly on compact time intervals as $\Delta\downarrow0$. By Proposition \ref{prop:jitter}, a.s.\ $(\Z^\alpha,\Y^\alpha)$ satisfies the boundary jitter property. Then by the facts that $\phib_m^\alpha=\dm_{\Z^\alpha}(\psib_m^\alpha)$ and $\phib_m^{\alpha,\Delta}=\dm_{\Z^{\alpha,\Delta}}(\psib_m^{\alpha,\Delta})$ for $\Delta>0$, and the continuity of the derivative map shown in Theorem \ref{thm:dmcontinuous}, a.s.\ $\phib_m^{\alpha,\Delta}$ converges to $\phib_m^\alpha$ in the $J_1$-topology as $\Delta\downarrow0$. Since this holds for each $m=1,\dots,M$, the proof is complete.
\end{proof}

\begin{proof}[Proof of Corollary \ref{cor:approx}]
	By part (iii) of Theorem \ref{thm:DF}, Proposition \ref{prop:jitter}, condition 2' of the boundary jitter property and \cite[Lemma 4.13]{Lipshutz2017}, a.s.\ $\phib^\alpha$ is continuous at $t$ and at almost every $s\in(0,t)$. Thus, by Theorem \ref{thm:approx}, a.s.\ $\phib^{\alpha,\Delta}$ converges to $\phib^\alpha$ at $t$ and at almost every $s\in(0,t)$. Along with the uniform integrability of $\phib^{\alpha,\Delta}$ stated in Theorem \ref{thm:approx}, this implies that \eqref{eq:F'alphaapprox} holds.
\end{proof}

The remainder of this section is devoted to the proof Proposition \ref{prop:psibnconvergence}.

\subsection{Some useful lemmas}
\label{subs-useful}

Recall the derivative processes $\phib^\alpha$ introduced in Definition \ref{def:de} and the associated process $\psib^\alpha$ introduced in Remark \ref{rmk:psib}. We first recall a basic estimate.

\begin{lem}[{\cite[Lemma 5.7]{Lipshutz2017}}]
\label{lem:suphphibalphabound}
For each $\alpha\in U$, $p\geq2$ and $t<\infty$, 
	$$\E\lsb\sup_{0\leq s\leq t}\norm{\phib^\alpha(s)}^p\rsb<\infty\qquad\text{and}\qquad\E\lsb\sup_{0\leq s\leq t}\norm{\psib^\alpha(s)}^p\rsb<\infty.$$
\end{lem}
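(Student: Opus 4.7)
The plan is to combine three ingredients: (i) an $L^p$-moment bound on the $N$-dimensional constraining process $\L^\alpha$; (ii) the deterministic Lipschitz continuity of the derivative map $\dm^\alpha_{\Z^\alpha}$ supplied by Proposition \ref{prop:dmlip}, which converts a pathwise bound on $\psib^\alpha$ into one on $\phib^\alpha$; and (iii) a BDG/Gronwall argument applied to the linear equation \eqref{eq:psib} defining $\psib^\alpha$. The key subtlety is that $\phib^\alpha$ appears under the drift and diffusion integrals in the equation for $\psib^\alpha$, so the inequality one obtains at the end of the BDG step involves both processes; the Lipschitz bound on $\dm^\alpha_{\Z^\alpha}$ is precisely what allows us to eliminate $\phib^\alpha$ in favour of $\psib^\alpha$ and close the Gronwall loop.

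First I would establish $\E[\sup_{0\leq s\leq t}|\L^\alpha(s)|^p]<\infty$. By Remark \ref{rmk:X}, $(\Z^\alpha,\L^\alpha)$ is the SP solution for the $\{\F_t\}$-adapted input $\X^\alpha$ defined in \eqref{eq:X}, while the constant pair $(x_0(\alpha),0)$ is trivially the SP solution for the constant input $x_0(\alpha)$. Proposition \ref{prop:sp}(i) then yields the pathwise bound
\[
\sup_{0\leq s\leq t}|\L^\alpha(s)| \leq \lip_\sm(\alpha)\sup_{0\leq s\leq t}|\X^\alpha(s)-x_0(\alpha)|,
\]
reducing matters to a moment estimate on $\X^\alpha$. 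Using \eqref{eq:X}, Hölder's inequality, the BDG inequality, Tonelli's theorem and the uniform bound $\lip_1$ on $b_x,\sigma_x$ from Assumption \ref{ass:coefficients} (which implies linear growth of $b(\alpha,\cdot)$ and $\sigma(\alpha,\cdot)$ starting from the value at $x_0(\alpha)$), together with the SM Lipschitz bound used once more to dominate $|\Z^\alpha(s)-x_0(\alpha)|$ by $|\X^\alpha(s)-x_0(\alpha)|$, Gronwall's inequality closes the estimate.

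Second, I would derive a self-referential moment bound for $\psib^\alpha$. Starting from \eqref{eq:psib}, the triangle inequality, Hölder, BDG applied to the two stochastic integrals, Tonelli, and the uniform bound $\lip_1$ on $b_\alpha,b_x,\sigma_\alpha,\sigma_x,R'$ from Assumption \ref{ass:coefficients} give, for some $C=C(\alpha,p,t)<\infty$ and every $u\in[0,t]$,
\[
\E\lsb\sup_{0\leq s\leq u}\norm{\psib^\alpha(s)}^p\rsb \leq C\lb 1 + \E\lsb\sup_{0\leq s\leq t}|\L^\alpha(s)|^p\rsb + \int_0^u\E\lsb\sup_{0\leq r\leq s}\norm{\phib^\alpha(r)}^p\rsb ds\rb.
\]
The first two bracketed terms are finite by step one. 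To close the loop, I would invoke Remark \ref{rmk:psib} to write $\phib^\alpha_m=\dm^\alpha_{\Z^\alpha}(\psib^\alpha_m)$ pathwise a.s., and then apply Proposition \ref{prop:dmlip} with the trivial comparison data $\psi_2\equiv 0$, $\phi_2\equiv 0$, $\eta_2\equiv 0$ (which satisfies Definition \ref{def:dp} along $\Z^\alpha$ since $0\in\hyper_{\Z^\alpha(s)}$ for every $s$) to obtain the pathwise bound
\[
\sup_{0\leq r\leq s}\norm{\phib^\alpha(r)}\leq\lip_\dm(\alpha)\sup_{0\leq r\leq s}\norm{\psib^\alpha(r)}.
\]
Substituting into the previous display and applying Gronwall's inequality to $u\mapsto\E[\sup_{0\leq s\leq u}\norm{\psib^\alpha(s)}^p]$ yields finiteness of this quantity, and the pathwise Lipschitz bound then transfers finiteness to $\phib^\alpha$.

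The main obstacle is the coupling between $\psib^\alpha$ and $\phib^\alpha$ in the defining equation, which is resolved cleanly by the deterministic Lipschitz property of the derivative map along the (random) path $\Z^\alpha$. A secondary technical point is verifying finiteness of the $p$-th moments of $\L^\alpha$, but this is a routine consequence of the Lipschitz continuity of the SM combined with standard SDE moment estimates. Note that neither the boundary jitter property nor ellipticity (Assumption \ref{ass:elliptic}) is used in this argument; the whole proof rests on the Lipschitz estimates of Propositions \ref{prop:sp}(i) and \ref{prop:dmlip} together with Assumption \ref{ass:coefficients}.
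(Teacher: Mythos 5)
Your proposal is correct and follows essentially the same route as the paper: this lemma is only cited here (from \cite[Lemma 5.7]{Lipshutz2017}), but your argument is exactly the strategy the paper uses for the discretized analogues (Lemmas \ref{lem:supLbound} and \ref{lem:suphphibalphahbound}) --- bound $\L^\alpha$ via the Lipschitz continuity of the SM (Proposition \ref{prop:sp}(i)) applied against the constant solution, then close a Gronwall loop for $\psib^\alpha$ by eliminating $\phib^\alpha$ through the derivative-map Lipschitz bound of Proposition \ref{prop:dmlip} with the zero solution as comparison. The only point you gloss over is the routine localization (stopping-time) argument needed to justify Gronwall before the moments are known to be finite, which is standard and does not affect the validity of the approach.
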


We now prove an estimate that relates the derivative map along the reflected diffusion with the derivative map along the Euler discretization of the reflected diffusion.

\begin{lem}\label{lem:dmZalphaetabalpha}
	For each $\alpha\in U$, $p\geq2$, $m=1,\dots,M$ and $t<\infty$,
	\be\label{eq:dmZalphaetabalpha}\lim_{\Delta\downarrow0}\E\lsb\int_0^t|\dm_{\Z^{\alpha,\Delta}}(\psib_m^\alpha)(s)-\dm_{\Z^\alpha}(\psib_m^\alpha)(s)|^pds\rsb=0.\ee
\end{lem}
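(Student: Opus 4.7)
The plan is to prove this $L^p$ statement by first obtaining almost sure, pointwise-in-$s$ convergence of the derivative maps (along a subsequence $\Delta_\ell\downarrow 0$) and then invoking dominated convergence twice --- once in the $ds$-integral and once in the expectation. The only pieces I need are already in place: the continuity property of the derivative map (Theorem \ref{thm:dmcontinuous}), Lipschitz continuity of the derivative map (Proposition \ref{prop:dmlip}), the boundary jitter property of $(\Z^\alpha,\Y^\alpha)$ (Proposition \ref{prop:jitter}), and $L^p$ bounds on $\psib^\alpha$ (Lemma \ref{lem:suphphibalphabound}).

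I would first reduce to almost sure convergence along a subsequence. By Proposition \ref{prop:Xhconverge}, $\X^{\alpha,\Delta}\to\X^\alpha$ in $L^p$ uniformly on $[0,t]$, so along some subsequence $\Delta_\ell\downarrow 0$, a.s.\ $\X^{\alpha,\Delta_\ell}\to\X^\alpha$ uniformly on $[0,t]$, in particular in $\dr(\R^J)$. On the almost sure set where additionally $(\Z^\alpha,\Y^\alpha)$ satisfies the boundary jitter property (Proposition \ref{prop:jitter}), I can apply Theorem \ref{thm:dmcontinuous} with $\x = \X^\alpha$, $\x_\ell = \X^{\alpha,\Delta_\ell}$, $\z = \Z^\alpha$, $\z_\ell = \Z^{\alpha,\Delta_\ell}$, and $\psi_\ell=\psi=\psib_m^\alpha$. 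The hypotheses are met: $\psib_m^\alpha$ is continuous, and the argument preceding \eqref{eq:phibt0Deltahyper} shows $\psib_m^\alpha(0)=[x_0'(\alpha)]_m\in\hyper_{x_0(\alpha)}=\hyper_{\Z^\alpha(0)}$. The conclusion is that $\dm_{\Z^{\alpha,\Delta_\ell}}^\alpha(\psib_m^\alpha)\to\dm_{\Z^\alpha}^\alpha(\psib_m^\alpha)=\phib_m^\alpha$ in the $J_1$-topology, almost surely.

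Next I would convert $J_1$ convergence to pointwise-a.e.\ convergence. By Theorem \ref{thm:DF}(iii), a.s.\ $\phib_m^\alpha$ is continuous at almost every $s\in[0,t]$, and $J_1$-convergence to a function entails pointwise convergence at every continuity point of the limit. Hence a.s., for Lebesgue-a.e.\ $s\in[0,t]$,
\[
\bigl|\dm_{\Z^{\alpha,\Delta_\ell}}^\alpha(\psib_m^\alpha)(s)-\dm_{\Z^\alpha}^\alpha(\psib_m^\alpha)(s)\bigr|^p\longrightarrow 0.
\]
To control the integrand uniformly, I use Proposition \ref{prop:dmlip} with the trivial solution pair $(\psi_2,\phi_2)=(0,0)$ (valid since $0\in\hyper_{\z(u)}$ for every $u$) to obtain
\[
\sup_{0\leq s\leq t}\bigl|\dm_{\Z^{\alpha,\Delta}}^\alpha(\psib_m^\alpha)(s)\bigr|\leq\lip_\dm(\alpha)\sup_{0\leq s\leq t}|\psib_m^\alpha(s)|,
\]
which is independent of $\Delta$. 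Combined with the same bound for $\phib_m^\alpha$, the integrand is dominated by $(2\lip_\dm(\alpha))^p\sup_{0\leq s\leq t}|\psib_m^\alpha(s)|^p$.

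Finally, a first application of the dominated convergence theorem (in $s$) yields, a.s., $\int_0^t|\dm_{\Z^{\alpha,\Delta_\ell}}^\alpha(\psib_m^\alpha)(s)-\phib_m^\alpha(s)|^pds\to 0$. The same $\Delta$-uniform bound shows this integral is dominated by $t(2\lip_\dm(\alpha))^p\sup_{0\leq s\leq t}|\psib_m^\alpha(s)|^p$, which is integrable by Lemma \ref{lem:suphphibalphabound}. A second application of dominated convergence (in $\omega$) gives $\E[\int_0^t|\dm_{\Z^{\alpha,\Delta_\ell}}^\alpha(\psib_m^\alpha)(s)-\phib_m^\alpha(s)|^pds]\to 0$ along the subsequence. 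Since the same argument applies to every subsequence of any sequence $\Delta_n\downarrow 0$, the full limit as $\Delta\downarrow 0$ is $0$, completing the proof. There is no serious obstacle --- the only modestly delicate point is the a.s.\ upgrade from $J_1$ to pointwise-a.e.\ convergence, which is handled cleanly by the almost everywhere continuity of $\phib_m^\alpha$ from Theorem \ref{thm:DF}(iii).
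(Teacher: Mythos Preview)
Your proof is correct and follows essentially the same approach as the paper: apply Theorem \ref{thm:dmcontinuous} to obtain a.s.\ $J_1$-convergence of $\dm_{\Z^{\alpha,\Delta}}(\psib_m^\alpha)$ to $\dm_{\Z^\alpha}(\psib_m^\alpha)$, pass to pointwise-a.e.\ convergence in $s$, and use the uniform-in-$\Delta$ Lipschitz bound from Proposition \ref{prop:dmlip} together with Lemma \ref{lem:suphphibalphabound} to justify dominated convergence. Your treatment is in fact slightly more careful than the paper's: the paper asserts a.s.\ convergence of $\X^{\alpha,\Delta}$ to $\X^\alpha$ directly from Proposition \ref{prop:Xhconverge} (which only gives $L^p$ convergence), whereas you correctly pass to a subsequence and then use the ``every subsequence has a further subsequence'' argument to recover the full limit.
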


\begin{proof}
Fix $\alpha\in U$, $p\geq2$, $m=1,\dots,M$ and $t<\infty$. Recall the constraining process $\Y^\alpha$ introduced in Remark \ref{rmk:Y} and the process $\X^\alpha$ defined in \eqref{eq:X}. By Remark \ref{rmk:X} and Proposition \ref{prop:jitter}, a.s.\
\begin{itemize}
	\item[(a)] $(\Z^\alpha,\Y^\alpha)$ is the solution to the ESP for $\X^\alpha$, and
	\item[(b)] $(\Z^\alpha,\Y^\alpha)$ satisfies the boundary jitter property.
\end{itemize}
For $\Delta>0$, recall the process $\X^{\alpha,\Delta}$ defined in \eqref{eq:Xh}. Then by Remark \ref{rmk:XDelta} and Proposition \ref{prop:Xhconverge}, a.s. 
\begin{itemize}
	\item[(c)] $\Z^{\alpha,\Delta}=\esm(\X^{\alpha,\Delta})$, and
	\item[(d)] $\X^{\alpha,\Delta}$ converges to $\X^\alpha$ in $\dr(\R^J)$ as $\Delta\downarrow0$.
\end{itemize}
Since (a)--(d) hold a.s., it follows from Theorem \ref{thm:dmcontinuous} that a.s.\ $\dm_{\Z^{\alpha,\Delta}}(\psib_m^\alpha)$ converges to $\dm_{\Z^\alpha}(\psib_m^\alpha)$ in $\dr(\R^J)$ as $\Delta\downarrow0$. Thus, a.s.
	\be\label{eq:dmZhdmZ}\dm_{\Z^{\alpha,\Delta}}(\psib_m^\alpha)(s)\to\dm_{\Z^\alpha}(\psib_m^\alpha)(s)\text{ as $\Delta \downarrow0$ for almost every $s\in[0,t]$}.\ee
By the Lipschitz continuity of the derivative map (Proposition \ref{prop:dmlip}),
	\begin{align*}
	\sup_{0\leq s\leq t}|\dm_{\Z^{\alpha,\Delta}}(\psib_m^\alpha)(s)|^p+\sup_{0\leq s\leq t}|\dm_{\Z^{\alpha}}(\psib_m^\alpha)(s)|^p&\leq2(\lip_\dm(\alpha))^p\sup_{0\leq s\leq t}|\psib_m^\alpha(s)|^p.
	\end{align*}
Therefore, by Lemma \ref{lem:suphphibalphabound}, the dominated convergence theorem and \eqref{eq:dmZhdmZ}, \eqref{eq:dmZalphaetabalpha} holds.
\end{proof}

\subsection{Proof of Proposition \ref{prop:psibnconvergence}}\label{sec:psibnconvergence}

Fix $\alpha\in U$. For brevity, throughout this section we let 
$$\lip\doteq\max\lb\lip_1,\lip_2,\lip_\sm(\alpha),\lip_\dm(\alpha)\rb<\infty,$$
where $\lip_1$ and $\lip_2$ denote the constants in Assumption \ref{ass:coefficients}, $\lip_\sm(\alpha)$ denotes the constant in Proposition \ref{prop:sp} and $\lip_\dm(\alpha)$ denotes the constant in Proposition \ref{prop:dmlip}.

\begin{proof}[Proof of Proposition \ref{prop:psibnconvergence}]
Let $p\geq2$ and $t<\infty$. By \eqref{eq:psibn} and \eqref{eq:psib}, for each $s\in[0,t]$,
\begin{align}\label{eq:psibnpsib}
	&\norm{\psib^{\alpha,\Delta}(s)-\psib^\alpha(s)}\\ \label{eq:term1}
	&\qquad\leq\norm{\int_0^sb_\alpha(\alpha,\Z^{\alpha,\Delta}(u-))d\rho^\Delta(u)-\int_0^sb_\alpha(\alpha,\Z^{\alpha,\Delta}(u))du}\\ \label{eq:term2}
	&\qquad+\norm{\int_0^s\lcb b_\alpha(\alpha,\Z^{\alpha,\Delta}(u))-b_\alpha(\alpha,\Z^\alpha(u))\rcb du}\\ \label{eq:term3}
	&\qquad+\norm{\int_0^sb_x(\alpha,\Z^{\alpha,\Delta}(u-))\phib^{\alpha,\Delta}(u-)d\rho^\Delta(u)-\int_0^sb_x(\alpha,\Z^{\alpha,\Delta}(u))\phib^{\alpha,\Delta}(u)du}\\ \label{eq:term4}
	&\qquad+\norm{\int_0^s\lcb b_x(\alpha,\Z^{\alpha,\Delta}(u))-b_x(\alpha,\Z^\alpha(u))\rcb\phib^{\alpha,\Delta}(u)du}\\ \label{eq:term5}
	&\qquad+\norm{\int_0^sb_x(\alpha,\Z^\alpha(u))(\phib^{\alpha,\Delta}(u)-\phib^\alpha(u)) du}\\ \label{eq:term6}
	&\qquad+\norm{\int_0^s\sigma_\alpha(\alpha,\Z^{\alpha,\Delta}(u-))d\bm^\Delta(u)-\int_0^s\sigma_\alpha(\alpha,\Z^{\alpha,\Delta}(u))d\bm(u)}\\ \label{eq:term7}
	&\qquad+\norm{\int_0^s\lcb \sigma_\alpha(\alpha,\Z^{\alpha,\Delta}(u))-\sigma_\alpha(\alpha,\Z^\alpha(u))\rcb d\bm(u)}\\ \label{eq:term8}
	&\qquad+\norm{\int_0^s\sigma_x(\alpha,\Z^{\alpha,\Delta}(u-))\phib^{\alpha,\Delta}(u-)d\bm^\Delta(u)-\int_0^s\sigma_x(\alpha,\Z^{\alpha,\Delta}(u))\phib^{\alpha,\Delta}(u)d\bm(u)}\\ \label{eq:term9}
	&\qquad+\norm{\int_0^s\lcb \sigma_x(\alpha,\Z^{\alpha,\Delta}(u))-\sigma_x(\alpha,\Z^\alpha(u))\rcb\phib^{\alpha,\Delta}(u)d\bm(u)}\\ \label{eq:term10}
	&\qquad+\norm{\int_0^s \sigma_x(\alpha,\Z^\alpha(u))\lb\phib^{\alpha,\Delta}(u)-\phib^\alpha(u)\rb d\bm(u)}\\ \label{eq:term11}
	&\qquad+\norm{R'(\alpha)(\L^{\alpha,\Delta}(s)-\L^\alpha(s))}.
\end{align}
We treat each of the eleven terms on the right-hand side of the last display separately.
	
\eqref{eq:term1}: Since $\Z^{\alpha,\Delta}$ and $\rho^\Delta$ are constant on intervals of form $[t_{n-1}^\Delta,t_n^\Delta)$ for $n\in\N$, it follows that for $s\in[0,t]$,
	$$\int_0^sb_\alpha(\alpha,\Z^{\alpha,\Delta}(u-))d\rho^\Delta(u)-\int_0^sb_\alpha(\alpha,\Z^{\alpha,\Delta}(u))du=-b_\alpha(\alpha,\Z^{\alpha,\Delta}(t_{\lfloor s/\Delta\rfloor}^\Delta))(s-t_{\lfloor s/\Delta\rfloor}^\Delta).$$
Therefore, by Assumption \ref{ass:coefficients} and the fact that $|s-t_{\lfloor s/\Delta\rfloor}^\Delta|\leq\Delta$,
\begin{align}\label{eq:term1a}
	\E\lsb\sup_{0\leq s\leq t}\left|\int_0^sb_\alpha(\alpha,\Z^{\alpha,\Delta}(u-))d\rho^\Delta(u)-\int_0^sb_\alpha(\alpha,\Z^{\alpha,\Delta}(u))du\right|^p\rsb&\leq \Delta^p\lip^p.
\end{align}
	
\eqref{eq:term2}: By H\"older's inequality and Assumption \ref{ass:coefficients}, with the associated constant $\gamma$, 
\begin{align}\label{eq:term2a}
	&\E\lsb\sup_{0\leq s\leq t}\norm{\int_0^s\lcb b_\alpha(\alpha,\Z^{\alpha,\Delta}(u))-b_\alpha(\alpha,\Z^\alpha(u))\rcb du}^p\rsb\\ \notag
	&\qquad\leq t^p\lip^p\E\lsb\sup_{0\leq s\leq t}|\Z^{\alpha,\Delta}(s)-\Z^\alpha(s)|^{\gamma p}\rsb.
\end{align}
	
\eqref{eq:term3}: Since $\Z^{\alpha,\Delta}$ and $\phib^{\alpha,\Delta}$ are constant on intervals of form $[t_{n-1}^\Delta,t_n^\Delta)$ for $k\in\N$, it follows that for $s\in[0,t]$,
\begin{align*}
	&\int_0^sb_x(\alpha,\Z^{\alpha,\Delta}(u-))\phib^{\alpha,\Delta}(u-)d\rho^\Delta(u)-\int_0^sb_x(\alpha,\Z^{\alpha,\Delta}(u))\phib^{\alpha,\Delta}(u)du\\
	&\qquad=\int_{t_{\lfloor s/\Delta\rfloor}^\Delta}^sb_x(\alpha,\Z^{\alpha,\Delta}(u))\phib^{\alpha,\Delta}(u)du.
\end{align*}
Therefore, using H\"older's inequality and Assumption \ref{ass:coefficients},
\begin{align}\label{eq:term3a}
	&\E\lsb\sup_{0\leq s\leq t}\norm{\int_0^sb_x(\alpha,\Z^{\alpha,\Delta}(u-))\phib^{\alpha,\Delta}(u-)d\rho^\Delta(u)-\int_0^sb_x(\alpha,\Z^{\alpha,\Delta}(u))\phib^{\alpha,\Delta}(u)du}^p\rsb\\ \notag
	&\qquad\leq\Delta^p\lip^p\E\lsb\sup_{0\leq s\leq t}\norm{\phib^{\alpha,\Delta}(s)}^p\rsb.
\end{align}
	
\eqref{eq:term4}: By H\"older's inequality, Assumption \ref{ass:coefficients} and the Cauchy-Schwarz inequality,
\begin{align}\label{eq:term4a}
	&\E\lsb\sup_{0\leq s\leq t}\norm{\int_0^s\lcb b_x(\alpha,\Z^{\alpha,\Delta}(u))-b_x(\alpha,\Z^\alpha(u))\rcb\phib^{\alpha,\Delta}(u)du}^p\rsb\\ \notag
	&\qquad\leq t^p\lip^p\lcb\E\lsb\sup_{0\leq s\leq t}|\Z^{\alpha,\Delta}(s)-\Z^\alpha(s)|^{2\gamma p}\rsb\E\lsb\sup_{0\leq s\leq t}\norm{\phib^{\alpha,\Delta}(s)}^{2 p}\rsb\rcb^{\frac{1}{2}}.
\end{align}
	
\eqref{eq:term5}: By H\"older's inequality, Tonelli's theorem and Assumption \ref{ass:coefficients},
\begin{align}\label{eq:term5a}
	&\E\lsb\sup_{0\leq s\leq t}\norm{\int_0^sb_x(\alpha,\Z^\alpha(u))(\phib^{\alpha,\Delta}(u)-\phib^\alpha(u)) du}^p\rsb\\ \notag
	&\qquad\leq t^{p-1}\lip^p\int_0^t\E\lsb\norm{\phib^{\alpha,\Delta}(s)-\phib^\alpha(s)}^p\rsb ds.
\end{align}
	
\eqref{eq:term6}: Since $\Z^{\alpha,\Delta}$ and $W^\Delta$ are constant on intervals of form $[t_{n-1}^\Delta,t_n^\Delta)$ for $n\in\N$, it follows that for $s\in[0,t]$,
\begin{align*}
	&\int_0^s\sigma_\alpha(\alpha,\Z^{\alpha,\Delta}(u-))d\bm^\Delta(u)-\int_0^s\sigma_\alpha(\alpha,\Z^{\alpha,\Delta}(u))d\bm(u)\\
	&\qquad=-\sigma_\alpha(\alpha,\Z^{\alpha,\Delta}(t_{\lfloor s/\Delta\rfloor}^\Delta))(\bm(s)-\bm(t_{\lfloor s/\Delta\rfloor}^\Delta)).
\end{align*}
Therefore, by Assumption \ref{ass:coefficients}, \eqref{eq:osc} and the fact that $|s-t_{\lfloor s/\Delta\rfloor}^\Delta|\leq\Delta$,
\begin{align}\label{eq:term6a}
	&\E\lsb\sup_{0\leq s\leq t}\left|\int_0^s\sigma_\alpha(\alpha,\Z^{\alpha,\Delta}(u-))d\bm^\Delta(u)-\int_0^s\sigma_\alpha(\alpha,\Z^{\alpha,\Delta}(u))d\bm(u)\right|^p\rsb\\ \notag
	&\qquad\leq\lip^p\E\lsb|\osc(\bm,\Delta,t)|^p\rsb.
\end{align}
	
\eqref{eq:term7}: By the BDG inequalities and Assumption \ref{ass:coefficients},
\begin{align}\label{eq:term7a}
	&\E\lsb\sup_{0\leq s\leq t}\norm{\int_0^s\lcb \sigma_\alpha(\alpha,\Z^{\alpha,\Delta}(u))-\sigma_\alpha(\alpha,\Z^\alpha(u))\rcb d\bm(u)}^p\rsb\\ \notag
	&\qquad\leq C_p\lip^pt\E\lsb|\Z^{\alpha,\Delta}(s)-\Z^\alpha(s)|^{\gamma p}\rsb.
\end{align}
	
\eqref{eq:term8}: Since $\Z^{\alpha,\Delta}$, $\phib^{\alpha,\Delta}$ and $W^\Delta$ are constant on intervals of form $[t_{n-1}^\Delta,t_n^\Delta)$ for $k\in\N$, it follows that for $s\in[0,t]$,
\begin{align*}
	&\int_0^s\sigma_x(\alpha,\Z^{\alpha,\Delta}(u-))\phib^{\alpha,\Delta}(u-)d\bm^\Delta(u)-\int_0^s\sigma_x(\alpha,\Z^{\alpha,\Delta}(u))\phib^{\alpha,\Delta}(u-)d\bm(u)\\ \notag
	&\qquad=-\sigma_x(\alpha,\Z^{\alpha,\Delta}(t_{\lfloor
		s/\Delta\rfloor}^\Delta))\phib^{\alpha,\Delta}(t_{\lfloor s/\Delta\rfloor}^\Delta)(\bm(s)-\bm(t_{\lfloor s/\Delta\rfloor}^\Delta)).
\end{align*}
Therefore, by Assumption \ref{ass:coefficients}, the Cauchy-Schwarz inequality, \eqref{eq:osc} and the fact that $|s-t_{\lfloor s/\Delta\rfloor}^\Delta|\leq\Delta$,
\begin{align}\label{eq:term8a}
	&E\lsb\sup_{0\leq s\leq t}\norm{\int_0^s\sigma_x(\alpha,\Z^{\alpha,\Delta}(u-))\phib^{\alpha,\Delta}(u-)d\bm^\Delta(u)-\int_0^s\sigma_x(\alpha,\Z^{\alpha,\Delta}(u))\phib^{\alpha,\Delta}(u)d\bm(u)}^p\rsb\\ \notag
	&\qquad\leq \lip^p\lcb\E\lsb\sup_{0\leq s\leq t}\norm{\phib^{\alpha,\Delta}(s)}^{2p}\rsb\E\lsb|\osc(\bm,\Delta,t)|^{2p}\rsb\rcb^{\frac{1}{2}}.
\end{align}
	
	\eqref{eq:term9}: By the BDG inequalities, Assumption \ref{ass:coefficients} and the Cauchy-Schwarz inequality,
	\begin{align}\label{eq:term9a}
	&\E\lsb\sup_{0\leq s\leq t}\norm{\int_0^s\lcb \sigma_x(\alpha,\Z^{\alpha,\Delta}(u))-\sigma_x(\alpha,\Z^\alpha(u))\rcb\phib^{\alpha,\Delta}(u)d\bm(u)}^p\rsb\\ \notag
	&\qquad\leq C_p\lip^pt\lcb\E\lsb\sup_{0\leq s\leq t}|\Z^{\alpha,\Delta}(s)-\Z^\alpha(s)|^{2\gamma p}\rsb\E\lsb\sup_{0\leq s\leq t}\norm{\phib^{\alpha,\Delta}(s)}^{2 p}\rsb\rcb^{\frac{1}{2}}.
	\end{align}
	
	\eqref{eq:term10}: By the BDG inequalities, Tonelli's theorem and Assumption \ref{ass:coefficients},
	\begin{align}\label{eq:term10a}
	&\E\lsb\sup_{0\leq s\leq t}\norm{\int_0^s \sigma_x(\alpha,\Z^\alpha(u))\lb\phib^{\alpha,\Delta}(u)-\phib^\alpha(u)\rb d\bm(u)}^p\rsb\\ \notag
	&\qquad\leq C_p\lip^p\int_0^t\E\lsb\norm{\phib^{\alpha,\Delta}(s)-\phib^\alpha(s)}^p\rsb ds.
	\end{align}
	
	\eqref{eq:term11}: By Assumption \ref{ass:coefficients}, the facts that $(\Z^{\alpha,\Delta},\L^{\alpha,\Delta})$ solves the SP for $\X^{\alpha,\Delta}$ and $(\Z^\alpha,\L^\alpha)$ solves the SP for $\X^\alpha$ and the Lipschitz continuity of the SM (Proposition \ref{prop:sp}(i)),
	\begin{align}\label{eq:term11a}
	E\lsb\norm{\sup_{0\leq s\leq t}R'(\alpha)(\L^{\alpha,\Delta}(s)-\L^\alpha(s))}^p\rsb&\leq\lip^{2p}\E\lsb\sup_{0\leq s\leq t}|\X^{\alpha,\Delta}(s)-\X^\alpha(s)|^p\rsb.
	\end{align}
	
Before combining the terms, we observe that because $\phib_m^{\alpha,\Delta}=\dm_{\Z^{\alpha,\Delta}}(\psib_m^{\alpha,\Delta})$ and $\phib_m^\alpha=\dm_{\Z^\alpha}(\psib_m^\alpha)$ for each $m=1,\dots,M$, it follows from the Lipschitz continuity of the derivative map that
\begin{align*}
	&|\phib_m^{\alpha,\Delta}(s)-\phib_m^\alpha(s)|\\
	&\qquad\leq|\dm_{\Z^{\alpha,\Delta}}(\psib_m^{\alpha,\Delta})(s)-\dm_{\Z^{\alpha,\Delta}}(\psib_m^\alpha)(s)|+|\dm_{\Z^{\alpha,\Delta}}(\psib_m^\alpha)(s)-\dm_\Z(\psib_m^\alpha)(s)|\\ 
	&\qquad\leq\lip\sup_{0\leq u\leq s}|\psib_m^{\alpha,\Delta}(u)-\psib_m^\alpha(u)|+|\dm_{\Z^{\alpha,\Delta}}(\psib_m^\alpha)(s)-\dm_\Z(\psib_m^\alpha)(s)|.
\end{align*}
Thus,
\begin{align}\label{eq:phibhphib}
	\norm{\phib^{\alpha,\Delta}(s)-\phib^\alpha(s)}^p&\leq(2M)^{p-1}\lip^p\sup_{0\leq u\leq s}\norm{\psib^{\alpha,\Delta}(u)-\psib^\alpha(u)}^p\\ \notag
	&\qquad+(2M)^{p-1}\sum_{m=1}^M|\dm_{\Z^{\alpha,\Delta}}(\psib_m^\alpha)(s)-\dm_\Z(\psib_m^\alpha)(s)|^p.
\end{align}
Now combining \eqref{eq:psibnpsib}--\eqref{eq:phibhphib} and applying Gronwall's inequality, we obtain
\begin{align*}
	\E\lsb\sup_{0\leq s\leq t}\norm{\psib^{\alpha,\Delta}(s)-\psib^\alpha(s)}^p\rsb&\leq 11^{p-1}\lip^p C_\Delta\exp(11^{p-1}(M+1)^{p-1}\lip^{2p}(t^p+C_pt)),
\end{align*}
where
\begin{align*}
	C_\Delta&\doteq \Delta^p\lb1+\E\lsb\sup_{0\leq s\leq t}\norm{\phib^{\alpha,\Delta}(s)}^p\rsb\rb+\lip^p\E\lsb|\osc(\bm,\Delta,t)|^p\rsb\\
	&\qquad+\lcb\E\lsb\sup_{0\leq s\leq t}\norm{\phib^{\alpha,\Delta}(s)}^{2p}\rsb\E\lsb|\osc(\bm,\Delta,t)|^{2p}\rsb\rcb^{\frac{1}{2}}\\
	&\qquad+(t^p+C_pt)\E\lsb\sup_{0\leq s\leq t}|\Z^{\alpha,\Delta}(s)-\Z^\alpha(s)|^{\gamma p}\rsb\\
	&\qquad+(t^p+C_pt)\lcb\E\lsb\sup_{0\leq s\leq t}|\Z^{\alpha,\Delta}(s)-\Z^\alpha(s)|^{2\gamma p}\rsb\E\lsb\sup_{0\leq s\leq t}\norm{\phib^{\alpha,\Delta}(s)}^{2p}\rsb\rcb^{\frac{1}{2}}\\
	&\qquad+\lip^p\E\lsb\sup_{0\leq s\leq t}|\X^{\alpha,\Delta}(s)-\X^\alpha(s)|^p\rsb\\
	&\qquad+(2M)^{p-1}(t^{p-1}+C_p)\sum_{m=1}^M\E\lsb\int_0^t|\dm_{\Z^{\alpha,\Delta}}(\psib_m^\alpha)(s)-\dm_{\Z^\alpha}(\psib_m^\alpha)(s)|^pds\rsb.
\end{align*}
By Lemma \ref{lem:suphphibalphahbound}, Lemma \ref{lem:oscbmzero}, Theorem \ref{thm:EulerRD}, Proposition \ref{prop:Xhconverge} and Lemma \ref{lem:dmZalphaetabalpha}, $C_\Delta\to0$ as $\Delta\downarrow0$, thus completing the proof.
\end{proof}

\section{Continuity of the derivative map}\label{sec:dmcontinuous}

Throughout this section we fix $\alpha\in U$. For brevity, we drop the $\alpha$ notation and write $d(\cdot)$, $\dm_\z$ and $\proj_x$ for $d(\alpha,\cdot)$, $\dm_\z^\alpha$ and $\proj_x^\alpha$, respectively.

\subsection{Proof of Theorem \ref{thm:dmcontinuous}}

In this section we first prove Theorem \ref{thm:dmcontinuous} when $\psi_\ellindex=\psi$ for all $\ellindex\in\N$ and $\psi$ lies in a class of simple functions (see Lemma \ref{lem:psisimple} below). With that in mind, let $\simple(\R^J)$ denote the set of simple functions in $\dr(\R^J)$; that is,
$$\simple(\R^J)\doteq\lcb\sum_{j=1}^my_j1_{[s_j,s_{j+1})}:m\in\N,y_1,\dots,y_m\in\R^J,0\leq s_1<\cdots<s_{m+1}<\infty\rcb.$$
For $\z\in\cts(G)$ let 
$$B_\z\doteq\lcb t\geq0:\z(t)\in\partial G\rcb$$
denote the times in $[0,\infty)$ that $\z$ lies in the boundary $\partial G$, and for $\psi\in\dr(\R^J)$ let
$$J_\psi\doteq\lcb t>0:\psi(t)\neq\psi(t-)\rcb$$
denote the jump or discontinuity points of $\psi$ in $(0,\infty)$. Recall the definition of $\hyper_x$, $x\in G$, given in \eqref{eq:Hx}. For $\z\in\cts(G)$, define
\be\label{eq:simplez}\simple^\z(\R^J)\doteq\lcb\psi\in\simple(\R^J):\psi(0)\in \hyper_{\z(0)}\text{ and }B_\z\cap J_\psi=\emptyset\rcb.\ee

\begin{lem}\label{lem:dense}
Given $\z\in\cts(G)$, suppose $B_\z$ has zero Lebesgue measure. Then for $\psi\in\cts(\R^J)$ with $\psi(0)\in \hyper_{\z(0)}$, there is a sequence $\{\psi_k\}_{k\in\N}$ in $\simple^\z(\R^J)$ such that $\psi_k$ converges to $\psi$ uniformly on compact intervals as $k\to\infty$.
\end{lem}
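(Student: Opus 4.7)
The plan is to approximate $\psi$ by step functions with finer and finer partitions whose jump times avoid $B_\z$. The freedom to do so comes from the hypothesis that $B_\z$ has zero Lebesgue measure, which in particular implies $B_\z^c$ is dense in $[0,\infty)$.

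Fix $k\in\N$. Since $\psi$ is continuous, it is uniformly continuous on $[0,k+1]$; choose $\delta_k>0$ such that $|\psi(u)-\psi(v)|\leq 1/k$ whenever $u,v\in[0,k+1]$ and $|u-v|\leq\delta_k$. Using the density of $B_\z^c$ in $[0,\infty)$, select times
\[
0=s_1<s_2<\cdots<s_{m_k}<s_{m_k+1}
\]
with $s_{m_k+1}>k+1$, with spacing $s_{j+1}-s_j\leq\delta_k$ for all $j=1,\dots,m_k$, and with $s_j\in B_\z^c$ for $j=2,\ldots,m_k+1$. Then define
\[
\psi_k\doteq \psi(0)\,1_{[s_1,s_2)}+\sum_{j=2}^{m_k}\psi(s_j)\,1_{[s_j,s_{j+1})}.
\]

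It remains to verify that $\psi_k\in\simple^\z(\R^J)$ and that $\psi_k\to\psi$ uniformly on compacts. By construction $\psi_k\in\simple(\R^J)$ and $\psi_k(0)=\psi(0)\in\hyper_{\z(0)}$. The jump set of $\psi_k$ in $(0,\infty)$ is contained in $\{s_2,\ldots,s_{m_k+1}\}\subseteq B_\z^c$, so $J_{\psi_k}\cap B_\z=\emptyset$, confirming $\psi_k\in\simple^\z(\R^J)$. For a given compact interval $[0,T]$ and any $k\geq T$, every $t\in[0,T]\subset[s_1,s_{m_k+1})$ lies in some $[s_j,s_{j+1})$ with $1\leq j\leq m_k$, and by the choice of $\delta_k$,
\[
|\psi_k(t)-\psi(t)|=|\psi(s_j)-\psi(t)|\leq 1/k,
\]
since $|t-s_j|\leq\delta_k$ and $s_j,t\in[0,k+1]$. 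Hence $\sup_{t\in[0,T]}|\psi_k(t)-\psi(t)|\leq 1/k\to 0$.

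There is no substantive obstacle here; the argument is essentially the standard step-function approximation of a continuous function, with the one extra ingredient that the partition points beyond $s_1=0$ must be placed in the dense set $B_\z^c$, which is possible because $B_\z$ has zero Lebesgue measure.
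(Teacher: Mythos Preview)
Your proof is correct and follows essentially the same approach as the paper: approximate $\psi$ by step functions whose jump times lie in the dense set $B_\z^c$. The only minor difference is that the paper first takes an arbitrary simple approximant and then perturbs its jump times into $B_\z^c$, whereas you construct the partition directly in $B_\z^c$ from the start; your version is slightly more streamlined but the underlying idea is identical.
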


\begin{proof}
Fix $\psi\in\cts(\R^J)$ with $\psi(0)\in \hyper_{\z(0)}$. Let $t<\infty$ and $\ve>0$ be arbitrary. It suffices to show that there exists $\tilde{\psi}\in\simple^\z(\R^J)$ such that $\sup_{0\leq s\leq t}|\psi(s)-\tilde{\psi}(s)|<\ve$. Since $\simple(\R^J)$ is dense in $\cts(\R^J)$ under the topology of uniform convergence on compact intervals (see, e.g., \cite[Theorem 6.2.2]{Whitt2002}), we can choose $\hat\psi\in\simple(\R^J)$ such that $\hat\psi(0)=\psi(0)$ and $\sup_{0\leq s\leq t}|\psi(s)-\hat\psi(s)|<\ve/2$. By the definition of $\simple(\R^J)$ and the fact that $\hat\psi(0)=\psi(0)$, there exist $m\in\N$, $y_1,\dots,y_m\in\R^J$ and $0=s_1<\cdots<s_{m+1}$ such that $y_1=\psi(0)$ and
	$$\hat\psi=\sum_{j=1}^my_j1_{[s_j,s_{j+1})}.$$
Without loss of generality, we can assume $s_{m+1}>t$. Since $\psi$ is uniformly continuous on $[0,t]$, we can choose $\delta>0$ sufficiently small so that $s_{m+1}>t+\delta$, $s_j+\delta<s_{j+1}-\delta$ for each $j=1,\dots,m$, and
	$$\sup\{|\psi(u)-\psi(s)|:s,u\in[0,t],|u-s|<\delta\}<\ve/2.$$
Set $\tilde{s}_1=s_1=0$. For each $j=2,\dots,m+1$, let $\tilde{s}_j\in(s_j-\delta,s_j+\delta)\setminus B_\z$, which is nonempty because $B_\z$ has zero Lebesgue measure. Define $\tilde{\psi}\in\simple^\z(\R^J)$ by 
	$$\tilde{\psi}=\sum_{j=1}^my_j1_{[\tilde{s}_j,\tilde{s}_{j+1})}.$$
Then $\tilde{s}_{m+1}>t$. Let $s\in[0,t]$ and $j\in\{1,\dots,m\}$ be such that $s\in[\tilde{s}_j,\tilde{s}_{j+1})\subset(s_j-\delta,s_{j+1}+\delta)$. Then there exists $u\in(s_j,s_{j+1})$ such that $|u-s|<\delta$ and so $\tilde{\psi}(s)=y_j=\hat\psi(u)$ and
\begin{align*}
	|\psi(s)-\tilde{\psi}(s)|&\leq|\psi(s)-\psi(u)|+|\psi(u)-\hat\psi(u)|<\ve.
\end{align*}
Since $s\in[0,t]$ was arbitrary, this completes the proof.
\end{proof}

We now state a key convergence lemma, Lemma \ref{lem:psisimple}, and show how it, together with Lemma \ref{lem:dense}, can be used to prove Theorem \ref{thm:dmcontinuous}. The proof of  Lemma \ref{lem:psisimple} is deferred to  Section \ref{sec:psicts}, after first establishing some preliminary results in Section \ref{sec:derivative problemsolutions}.

\begin{lem}\label{lem:psisimple}
For $\alpha\in U$, given $\x\in\cts_G(\R^J)$ suppose that the solution $(\z,\y)$ to the ESP for $\x$ satisfies the boundary jitter property. Let $\{\x_\ellindex\}_{\ellindex\in\N}$ be a sequence in $\dr(\R^J)$ such that $\x_\ellindex$ converges to $\x$ in $\dr(\R^J)$ as $\ellindex\to\infty$ and for each $\ellindex\in\N$, let $(\z_\ellindex,\y_\ellindex)$ denote the solution to the ESP for $\x_\ellindex$. Then for all $\psi\in\simple^\z(\R^J)$, $\dm_{\z_\ellindex}(\psi)$ converges to $\dm_\z(\psi)$ in $\dr(\R^J)$ as $\ellindex\to\infty$.
\end{lem}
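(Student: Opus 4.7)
The plan is to decompose the problem at the jump times of $\psi$, use the time-shift property (Lemma \ref{lem:dptimeshift}) to reduce to a constant-driving DP on each subinterval, and then handle the constant-$\psi$ case by a compactness-plus-uniqueness argument. Write $\psi=\sum_{j=1}^m y_j 1_{[s_j,s_{j+1})}$ with $0=s_1<\cdots<s_{m+1}$, $y_1=\psi(0)\in\hyper_{\z(0)}$, and $s_j\notin B_\z$ for $j\geq 2$ (by the definition of $\simple^\z(\R^J)$). Set $\phi_\ellindex\doteq\dm_{\z_\ellindex}(\psi)$ and $\phi\doteq\dm_\z(\psi)$. By Proposition \ref{prop:sp}(ii) applied to $\x_\ellindex\to\x$, we have $\z_\ellindex\to\z$ in $\dr(\R^J)$, and since $\z$ is continuous this strengthens to uniform convergence on compact intervals.

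I argue by induction on $j$. At the transition time $s_j$ (for $j\geq 2$), since $s_j\notin B_\z$ we have $\z(s_j)\in G^\circ$; uniform convergence forces $\z_\ellindex(s_j)\in G^\circ$ for all sufficiently large $\ellindex$, so $d(\z_\ellindex(s_j))=\{0\}$, and condition 4 of the DP forbids a jump of $\eta_\ellindex$ at $s_j$. Therefore $\phi_\ellindex(s_j)-\phi_\ellindex(s_j-)=y_j-y_{j-1}=\phi(s_j)-\phi(s_j-)$, so the inductive convergence at $s_j-$ lifts to convergence at $s_j$. On $[s_j,s_{j+1})$, $\psi$ is constant equal to $y_j$; Lemma \ref{lem:dptimeshift} reduces the DP on this subinterval to one on $[0,s_{j+1}-s_j)$ with constant driving $\psi^{s_j}\equiv\phi(s_j)\in\hyper_{\z^{s_j}(0)}$ along $\z^{s_j}$, which inherits the boundary jitter property from $\z$. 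The induction thus reduces the claim to the special case in which $\psi\equiv y_0\in\hyper_{\z(0)}$ is constant.

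For the constant-$\psi$ case, Proposition \ref{prop:dmlip} provides a uniform bound $\sup_\ellindex\sup_{0\leq s\leq t}|\phi_\ellindex(s)|<\infty$ on every compact interval $[0,t]$. I extract a subsequence $\phi_{\ellindex_k}$ that converges in the $J_1$-topology to some $\tilde\phi\in\dr(\R^J)$, set $\tilde\eta\doteq\tilde\phi-\psi$, and verify that $(\tilde\phi,\tilde\eta)$ satisfies the four conditions of Definition \ref{def:dp} along $\z$ for $\psi$. Condition 1 is immediate from the definition of $\tilde\eta$; conditions 3 and 4 follow by passing to the limit in the corresponding conditions for $(\phi_\ellindex,\eta_\ellindex)$, using the geometric fact that for $\z_\ellindex(u)$ sufficiently close to $\z(u)$ one has $\allN(\z_\ellindex(u))\subseteq\allN(\z(u))$ and hence $d(\z_\ellindex(u))\subseteq d(\z(u))$; condition 2 is the subtlest and is recovered by combining the continuity property of $\dm_\z$ given in Proposition \ref{prop:dp} with the jitter conditions 3 and 4, which ensure that near every time $t$ with $\z(t)\in\partial G$ there are times $s$ at which $\z(s)$ lies on a single face and $\tilde\phi$ is continuous. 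Uniqueness of the DP solution (Proposition \ref{prop:dp}, applicable because $(\z,\y)$ satisfies boundary jitter) then forces $\tilde\phi=\phi$, so every subsequential $J_1$-limit is $\phi$ and the full sequence converges.

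The main obstacle is the passage to the limit in the constant-$\psi$ case, in particular verifying condition 2. The face-inclusion $\allN(\z_\ellindex(u))\subseteq\allN(\z(u))$ only holds once $\z_\ellindex(u)$ is close enough to $\z(u)$, yet the approximating paths $\z_\ellindex$ may transiently enter faces of $\partial G$ that $\z$ does not reach (for instance when $\z(u)\in G^\circ$ sits very near $\partial G$), producing $\eta_\ellindex$-increments in spurious directions and allowing $\phi_\ellindex(u)$ to leave $\hyper_{\z(u)}$. The strengthened condition 2' of the boundary jitter property—that $\z$ spends zero Lebesgue time in $\partial G$—combined with uniform convergence is essential to show that such spurious visits occur on time sets of vanishing measure and that the associated contributions vanish in the limit, yielding the required properties of $(\tilde\phi,\tilde\eta)$.
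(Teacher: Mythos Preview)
Your reduction to constant $\psi$ via the jump-time decomposition and Lemma \ref{lem:dptimeshift} is sound and parallels part of the paper's strategy. The genuine gap is in the compactness step for the constant-$\psi$ case. The uniform bound from Proposition \ref{prop:dmlip} gives only $\sup_\ellindex\sup_{0\leq s\leq t}|\phi_\ellindex(s)|<\infty$, but relative compactness in $\dr(\R^J)$ under $J_1$ also requires control of a Skorokhod modulus of continuity. Each $\phi_\ellindex$ jumps whenever $\z_\ellindex$ hits $\partial G$, and nothing in your argument prevents $\z_\ellindex$ from hitting $\partial G$ in a pattern very different from that of $\z$ (more hits, clustered hits, hits to different faces), so you have no a priori oscillation control on $\{\phi_\ellindex\}$. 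The paper avoids compactness altogether: it uses the explicit product-of-projections representation of the DP solution (Lemmas \ref{lem:DPpsiconstantST} and \ref{lem:phisimpletheta_2}) together with the jitter property to show that, for large $\ellindex$, $\z_\ellindex$ hits the \emph{same} faces in the \emph{same} order as $\z$ on compact subintervals of $[0,\theta_2)$, then constructs explicit time-changes $\lambda_\ellindex$ matching those hitting times, so that $\phi_\ellindex\circ\lambda_\ellindex=\phi$ exactly there. An induction on the depth of face intersections (Statement \ref{state:inductioncts}), using the contraction property of the projections (Lemma \ref{lem:projxcontract}) to pass through multi-face times, extends this to all of $[0,\infty)$.

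Even if compactness were granted, your limit verification is incomplete. For condition 3, your claim that the ``spurious visits'' of $\z_\ellindex$ to $\partial G$ contribute nothing in the limit because they occur on time sets of vanishing measure is wrong: $\eta_\ellindex$ is a jump process, so zero-measure time sets can carry nonzero $\eta_\ellindex$-increments in directions not spanned by $\{d(\z(u)):u\in(s,t]\}$. For condition 2, knowing that $\tilde\phi$ is continuous at nearby single-face times $s$ does not by itself yield $\tilde\phi(t)\in\hyper_{\z(t)}$, since you have not first shown $\tilde\phi(s)\in\hyper_{\z(s)}$ at those times; the argument is circular as stated.
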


\begin{proof}[Proof of Theorem \ref{thm:dmcontinuous}]
Fix $\alpha \in U$ and, for any $h \in \dr(\R^J)$,  denote $\Lambda_h^\alpha$ just by $\Lambda_h$.  Fix $\x\in\cts_G(\R^J)$ and a sequence $\{\x_\ellindex\}_{\ellindex\in\N}$ in $\dr(\R^J)$ such that $\x_\ellindex$ converges to $\x$ in $\dr(\R^J)$ as $\ellindex\to\infty$. Let $(\z,\y)$ denote the solution to the ESP for $\x$ and for each $\ellindex\in\N$, let $(\z_\ellindex,\y_\ellindex)$ denote the solution to the ESP for $\x_\ellindex$. Fix $\psi\in\cts(\R^J)$ and a sequence $\{\psi_\ellindex\}_{\ellindex\in\N}$ in $\dr(\R^J)$ such that $\psi_\ellindex$ converges to $\psi$ in $\dr(\R^J)$ as $\ellindex\to\infty$. Let $d_{J_1}(\cdot,\cdot)$ denote a metric on $\dr(\R^J)$ that is compatible with the Skorokhod $J_1$-topology. Let $d_{U}(\cdot,\cdot)$ denote a metric on $\dr(\R^J)$ that is compatible with the topology of uniform convergence on compact intervals. Since the topology of uniform convergence on compact intervals is a stronger topology than the Skorokhod $J_1$-topology, we can assume the metrics are chosen such that $d_{J_1}(\cdot,\cdot)\leq d_U(\cdot,\cdot)$. Then by the triangle inequality and Lipschitz continuity of the derivative map (Proposition \ref{prop:dmlip}),
	\begin{align*}
	d_{J_1}(\dm_{\z_\ellindex}(\psi_\ellindex),\dm_\z(\psi))&\leq d_U(\dm_{\z_\ellindex}(\psi_\ellindex),\dm_{\z_\ellindex}(\psi))+d_{J_1}(\dm_{\z_\ellindex}(\psi),\dm_\z(\psi))\\
	&\leq\lip_\dm d_U(\psi_\ellindex,\psi)+d_{J_1}(\dm_{\z_\ellindex}(\psi),\dm_\z(\psi)).
	\end{align*}
	Since $\psi\in\cts(\R^J)$ and the Skorokhod $J_1$-topology relativized to $\cts(\R^J)$ coincides with the topology of uniform convergence on compact intervals there, $\psi_\ellindex$ converges to $\psi$ uniformly on compact intervals as $\ellindex\to\infty$. Therefore, $d_U(\psi_\ellindex,\psi)$ converges to zero as $\ellindex\to\infty$. We are left to show $d_{J_1}(\dm_{\z_\ellindex}(\psi),\dm_\z(\psi))$ converges to zero as $\ellindex\to\infty$. According to Lemma \ref{lem:dense}, there is a sequence $\{\psi_k\}_{k\in\N}$ in $\simple^\z(\R^J)$ such that $\psi_k$ converges to $\psi$ in $\dr(\R^J)$ as $k\to\infty$. For each $k\in\N$, by the Lipschitz continuity of the derivative map,
	\begin{align*}
	d_{J_1}(\dm_{\z_\ellindex}(\psi),\dm_\z(\psi))&\leq d_{J_1}(\dm_{\z_\ellindex}(\psi),\dm_{\z_\ellindex}(\psi_k))+d_{J_1}(\dm_{\z_\ellindex}(\psi_k),\dm_\z(\psi_k))+d_{J_1}(\dm_\z(\psi_k),\dm_\z(\psi))\\
	&\leq2\lip_\dm d_U(\psi,\psi_k)+d_{J_1}(\dm_{\z_\ellindex}(\psi_k),\dm_\z(\psi_k)).
	\end{align*}
	By Lemma \ref{lem:psisimple}, $d_{J_1}(\dm_{\z_\ellindex}(\psi_k),\dm_\z(\psi_k))$ converges to zero as $\ellindex\to\infty$. Then letting $k\to\infty$, $d_U(\psi,\psi_k)$ converges to zero. This completes the proof.
\end{proof}

\subsection{Characterization of solutions to the derivative problem}\label{sec:derivative problemsolutions}

In this section we characterize solutions to the DP along $\z$ when the input is simple up until the first time $\z$ hits the intersection of two or more faces. As a preliminary result, in Lemma \ref{lem:DPpsiconstantST}, we first characterize solutions to the DP when the input $\psi$ is constant.

In order to characterize the solutions to the DP, we first need some definitions. Recall that $F_i=\{x\in G:\ip{x,n_i}=c_i\}$ denotes the $i$th face of $G$, for $i\in\allN$; and $\allN(x)=\{i\in\allN:x\in F_i\}$ for $x\in G$. The following is an upper semicontinuity property of the set-valued function $\allN(\cdot)$.

\begin{lem}[{\cite[Lemma 2.1]{Kang2007}}]
	\label{lem:allNusc}
	For each $x\in G$, there is an open neighborhood $U_x$ of $x$ in $\R^J$ such that $\allN(y)\subseteq\allN(x)$ for all $y\in U_x\cap G$.
\end{lem}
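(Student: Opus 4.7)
The plan is to exploit the fact that $G$ is cut out by finitely many closed half-spaces in \eqref{eq:G}, so that $\allN(x)$ is determined by which of the finitely many linear inequalities $\langle \cdot, n_i\rangle \geq c_i$ are tight at $x$. The complementary index set $\allN \setminus \allN(x) = \{i \in \allN : \langle x, n_i\rangle > c_i\}$ consists of indices at which the defining inequality is strict at $x$, and strict linear inequalities are stable under small perturbations of $x$ by continuity of $y \mapsto \langle y, n_i\rangle$.

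Concretely, I would pick, for each $i \in \allN \setminus \allN(x)$, an open ball $U_x^i$ around $x$ on which $\langle y, n_i\rangle > c_i$ continues to hold, and then set
\[
U_x \doteq \bigcap_{i \in \allN \setminus \allN(x)} U_x^i,
\]
with the convention $U_x = \R^J$ when $\allN(x) = \allN$. Since $|\allN| = N < \infty$, this is a finite intersection of open neighborhoods of $x$, hence an open neighborhood of $x$. For any $y \in U_x \cap G$ and any $j \in \allN(y)$, we have $\langle y, n_j\rangle = c_j$, so $j$ cannot belong to $\allN \setminus \allN(x)$ (on which $\langle y, n_j\rangle > c_j$ strictly), forcing $j \in \allN(x)$. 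This gives $\allN(y) \subseteq \allN(x)$, as required.

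There is essentially no obstacle here: the result is a straightforward consequence of the finiteness of the defining half-space representation of $G$ together with continuity of the defining linear functionals. The only points meriting mention are the edge case $\allN(x) = \allN$ (handled by the empty-intersection convention) and the finiteness of $\allN$, which is built into the definition \eqref{eq:G}.
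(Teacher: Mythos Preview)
Your proof is correct and is the standard argument for this upper semicontinuity property. Note that the paper does not actually give a proof of this lemma; it simply cites \cite[Lemma 2.1]{Kang2007}, so there is no ``paper's own proof'' to compare against beyond the external reference. Your self-contained argument is exactly the expected one and would serve as a fine replacement for the bare citation.
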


Suppose $(\z,\y)$ is the solution to the ESP for $\x\in\cts_G(\R^J)$ and $0\leq S<T<\infty$. Define $\hat{t}_1\doteq S$ and for $k\geq1$ such that $\hat{t}_k<T$, define
	\be\label{eq:rhokST}\hat{\rho}_k\doteq\inf\{t\in(\hat{t}_k,T]:\allN(\z(t))\not\subseteq\allN(\z(\hat{t}_k))\}\wedge T\ee
to be the first time after $\hat{t}_k$ that $\z$ lies on a boundary face that is distinct from the faces on which $\z(\hat{t}_k)$ lies. Also, define
	\be\label{eq:tkST}\hat{t}_{k+1}\doteq\sup\{t\in[\hat{\rho}_k,T]:\allN(\z(s))\subseteq\allN(\z(t))\;\forall\;s\in[\hat{\rho}_k,t]\}.\ee
We claim that $\hat{t}_k=T$ for some $k\in\N$, at which point we set $K\doteq k$ and end the sequence. To see that the claim holds, suppose for a proof by contradiction that $\hat{t}_k<T$ for all $k\in\N$. Since $\{\hat{t}_k\}_{k\in\N}$ is increasing and bounded from above, there exists $\hat{t}_\infty\leq T$ such that $\hat{t}_k\to \hat{t}_\infty$ as $k\to\infty$. By the upper semicontinuity of $\allN(\cdot)$ shown in Lemma \ref{lem:allNusc} and the continuity of $\z$, this implies there exists $k_0\in\N$ such that $\allN(\z(t))\subseteq\allN(\z(\hat{t}_\infty))$ for all $t\geq \hat{t}_{k_0}$. Since $\hat t_{k_0}<\hat\rho_{k_0}<\hat t_{k_0+1}<\hat t_\infty$, this implies
	\be\label{eq:allNzttinfty}\allN(\z(t))\subseteq\allN(\z(\hat{t}_\infty))\qquad\text{for all }t\geq \hat{\rho}_{k_0}.\ee 
However, \eqref{eq:tkST} and \eqref{eq:allNzttinfty} imply $\hat t_{k_0+1}\geq\hat t_\infty$, which yields a contradiction. With this contradiction thus obtained, we see that $\hat{t}_k=T$ for some $k\in\N$.

We now characterize solutions to the DP when the input $\psi$ is constant. In this case $\phi=\dm_\z(\psi)$ is constant while $\z$ is in the interior of $G$ and jumps whenever $\z$ hits the boundary $\partial G$ and the jump can be expressed in terms of a derivative projection matrix. (Recall the derivative projection matrices $\proj_x$, $x\in G$, introduced in Lemma \ref{lem:projx}.)

\begin{lem}\label{lem:DPpsiconstantST}
Given $\x\in\dr(\R^J)$ and $\psi\in\dr(\R^J)$, suppose $(\z,\y)$ is the solution to the ESP for $\x$ and $(\phi,\eta)$ is the solution to the derivative problem along $\z$ for $\psi$. Let $0\leq S<T<\infty$ and define the nested sequence $S=\hat{t}_1<\hat\rho_1\leq \hat{t}_2<\cdots\leq \hat{t}_K=T$ as in \eqref{eq:rhokST}--\eqref{eq:tkST}. Suppose $\psi$ is constant on $[S,T]$. Then
	\be\label{eq:phiTprojphiS}\phi(T)=\proj_{\z(T)}\proj_{\z(\hat{t}_{K-1})}\cdots\proj_{\z(\hat{t}_2)}\phi(S).\ee
\end{lem}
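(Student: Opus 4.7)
My plan is to prove by induction on $k \in \{1,\ldots,K-1\}$ the single-step identity $\phi(\hat{t}_{k+1}) = \proj_{\z(\hat{t}_{k+1})}\phi(\hat{t}_k)$; iterating and using linearity of each $\proj_x$ (Remark \ref{rmk:projx}) will then yield \eqref{eq:phiTprojphiS} at $k = K-1$. Set $A_k \doteq \allN(\z(\hat{t}_k))$ and $A_{k+1} \doteq \allN(\z(\hat{t}_{k+1}))$. The direct sum decomposition $\R^J = \hyper_{\z(\hat{t}_{k+1})} \oplus \spaan[d(\z(\hat{t}_{k+1}))]$ underlying Lemma \ref{lem:projx} reduces the single-step identity to verifying (a) $\phi(\hat{t}_{k+1}) \in \hyper_{\z(\hat{t}_{k+1})}$, which is immediate from condition 2 of the derivative problem, and (b) $\phi(\hat{t}_{k+1}) - \phi(\hat{t}_k) \in \spaan[d(\z(\hat{t}_{k+1}))]$.

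For (b), I split the increment at the break time $\hat{\rho}_k$:
\[
\phi(\hat{t}_{k+1}) - \phi(\hat{t}_k) = [\phi(\hat{t}_{k+1}) - \phi(\hat{\rho}_k)] + [\phi(\hat{\rho}_k) - \phi(\hat{\rho}_k-)] + [\phi(\hat{\rho}_k-) - \phi(\hat{t}_k)].
\]
The definition \eqref{eq:tkST} of $\hat{t}_{k+1}$ forces $\allN(\z(u)) \subseteq A_{k+1}$ for every $u \in [\hat{\rho}_k, \hat{t}_{k+1}]$; together with $\psi$ being constant on $[S,T]$, conditions 3 and 4 of the derivative problem place the first two brackets in $\spaan[d(\z(\hat{t}_{k+1}))]$. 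The crux is therefore the identity $\phi(\hat{\rho}_k-) = \phi(\hat{t}_k)$, which I will establish by proving the stronger claim that $\phi \equiv \phi(\hat{t}_k)$ throughout $[\hat{t}_k, \hat{\rho}_k)$.

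By \eqref{eq:rhokST}, $\allN(\z(u)) \subseteq A_k$ for every $u \in [\hat{t}_k, \hat{\rho}_k)$, and hence $\hyper_{\z(u)} \supseteq \hyper_{\z(\hat{t}_k)} \ni \phi(\hat{t}_k)$. Consequently the constant processes $\hat\phi \equiv \phi(\hat{t}_k)$ and $\hat\eta \equiv \eta(\hat{t}_k)$ satisfy all four axioms of the derivative problem on $[\hat{t}_k, \hat{\rho}_k)$ with input $\psi|_{[\hat{t}_k, \hat{\rho}_k)}$: condition 1 from the derivative problem identity at $\hat{t}_k$ combined with constancy of $\psi$; condition 2 from the subspace inclusion just recorded; and conditions 3 and 4 trivially since $\hat\eta$ is constant. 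Since $\phi$ and $\hat\phi$ agree at $\hat{t}_k$, pathwise uniqueness of the derivative problem solution---obtained from the Lipschitz bound of Proposition \ref{prop:dmlip} after time-shifting via Lemma \ref{lem:dptimeshift}---forces $\phi \equiv \hat\phi$ on $[\hat{t}_k, \hat{\rho}_k)$, and hence $\phi(\hat{\rho}_k-) = \phi(\hat{t}_k)$.

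The main technical obstacle is justifying the uniqueness step on the \emph{half-open} sub-interval $[\hat{t}_k, \hat{\rho}_k)$: Proposition \ref{prop:dmlip} is stated for solutions on all of $[0,\infty)$, while the constant candidate $\hat\phi$ cannot in general be extended past $\hat{\rho}_k$ to a valid global derivative problem solution (condition 2 typically fails at $\hat{\rho}_k$, since $\phi(\hat{t}_k)$ need not lie in $\hyper_{\z(\hat{\rho}_k)}$). I anticipate overcoming this by observing that the Lipschitz argument underpinning Proposition \ref{prop:dmlip} is local in time, so it applies to any two solution pairs defined on a common $[0,\tau]$; I then invoke it on $[0, \hat{\rho}_k - \hat{t}_k - \varepsilon]$ for each $\varepsilon > 0$ and let $\varepsilon \downarrow 0$.
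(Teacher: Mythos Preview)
Your proposal is correct and follows essentially the same route as the paper's proof: both establish the single-step identity $\phi(\hat t_{k+1})=\proj_{\z(\hat t_{k+1})}\phi(\hat t_k)$ by first showing $\phi$ is constant on $[\hat t_k,\hat\rho_k)$ via uniqueness of DP solutions, and then using condition~3 together with the inclusion $\allN(\z(u))\subseteq\allN(\z(\hat t_{k+1}))$ on $[\hat\rho_k,\hat t_{k+1}]$ to capture the remaining increment in $\spaan[d(\z(\hat t_{k+1}))]$. Two small remarks: first, the inclusion $\allN(\z(u))\subseteq A_{k+1}$ on $[\hat\rho_k,\hat t_{k+1}]$ is not literally forced by the definition of $\hat t_{k+1}$ as a supremum---you need upper semicontinuity of $\allN(\cdot)$ (Lemma~\ref{lem:allNusc}) and continuity of $\z$ to see that the supremum is attained, exactly as the paper invokes; second, your explicit handling of the half-open-interval uniqueness via locality of the Lipschitz estimate is a point the paper leaves implicit, so your treatment is in fact slightly more careful there.
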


\begin{proof}
We will prove the relation
	\be\label{eq:phitk}\phi(\hat{t}_k)=\proj_{\z(\hat{t}_k)}\phi(\hat{t}_{k-1}),\qquad k=2,\dots, K.\ee 
We can then iterate \eqref{eq:phitk} and use the fact that $\hat{t}_1=S$ to obtain \eqref{eq:phiTprojphiS}. Let $k\in\{2,\dots,K\}$. We first claim that $\phi$ is constant on $[\hat{t}_{k-1},\hat{\rho}_ {k-1})$. Due to the uniqueness that follows from Lipschitz continuity of the derivative map stated in Proposition \ref{prop:dmlip}, in order to prove the claim it suffices to show that if $(\phi,\eta)$ satisfies conditions 1--4 of the derivative problem for $t\in[0,\hat{t}_{k-1}]$ and is constant on $[\hat{t}_{k-1},\hat{\rho}_{k-1})$, then $(\phi,\eta)$ satisfies conditions 1--4 of the derivative problem for $t\in[\hat{t}_{k-1},\hat{\rho}_{k-1})$. Let $t\in[\hat{t}_{k-1},\hat{\rho}_{k-1})$. Since $(\phi,\eta)$ satisfies condition 1 of the derivative problem at $\hat{t}_{k-1}$ and $\phi,\eta,\psi$ are constant on $[\hat{t}_{k-1},\hat{\rho}_{k-1})$, we have $\phi(t)=\phi(\hat{t}_{k-1})=\psi(\hat{t}_{k-1})+\eta(\hat{t}_{k-1})=\psi(t)+\eta(t)$, so condition 1 of the derivative problem holds at $t$. Next, since $\phi$ satisfies condition 2 of the derivative problem at $\hat{t}_{k-1}$, $\phi$ is constant on $[\hat{t}_{k-1},\hat{\rho}_{k-1})$ and $\allN(\z(t))\subseteq\allN(\z(\hat{t}_{k-1}))$ by the definition of $\hat{\rho}_{k-1}$ in \eqref{eq:rhokST}, we have $\phi(t)=\phi(\hat{t}_{k-1})\in \hyper_{\z(\hat{t}_{k-1})}\subseteq \hyper_{\z(t)}$, where the last inclusion holds by \eqref{eq:Hx}.   Thus, condition 2 of the derivative problem holds at $t$. Lastly, since $\eta$ is constant on $[\hat{t}_{k-1},\hat{\rho}_{k-1})$, conditions 3 and 4 of the derivative problem are straightforward to verify, so we omit the details. This concludes the proof that $(\phi,\eta)$ is constant on $[\hat{t}_{k-1},\hat{\rho}_{k-1})$.
	
Next, by the definition of $\hat{t}_k$ in \eqref{eq:tkST}, the upper semicontinuity of $\allN(\cdot)$ shown in Lemma \ref{lem:allNusc} and the continuity of $\z$, there exists $\delta>0$ such that $\allN(\z(t))\subseteq\allN(\z(\hat{t}_k))$ for all $t\in[\hat{\rho}_{k-1}-\delta,\hat{t}_k]$. Along with the fact that $\phi$ is constant on $[\hat{t}_{k-1},\hat{\rho}_{k-1})$, conditions 1--4 of the derivative problem,  the fact that $\psi$ is constant on $[S,T]$, and the definition of $d(\cdot)$ in \eqref{eq:dalphax}, this implies that $\phi(\hat{t}_k)\in \hyper_{\z(\hat{t}_k)}$ and
	$$\phi(\hat{t}_k)-\phi(\hat{t}_{k-1})=\eta(\hat{t}_k)-\eta(\hat{\rho}_{k-1}-\delta)\in\spaan\lsb\cup_{u\in(\hat{\rho}_{k-1}-\delta,\hat{t}_k]}d(\z(u))\rsb=\spaan\lsb d(\z(\hat{t}_k))\rsb.$$
The unique characterization of $\proj_{\z(\hat{t}_k)}$ stated in Lemma \ref{lem:projx} then implies \eqref{eq:phitk} holds.
\end{proof}

We have the following corollary of Lemma \ref{lem:DPpsiconstantST}.

\begin{cor}\label{cor:DPpsiconstantST}
Given $\x\in\dr(\R^J)$ and $\psi\in\dr(\R^J)$, suppose $(\z,\y)$ is the solution to the ESP for $\x$ and $(\phi,\eta)$ is the solution to the derivative problem along $\z$ for $\psi$. Let $0\leq S<T<\infty$ and suppose $\psi$ is constant on $[S,T]$. Then there exists $\hat m\in\N$ and $S<\hat s_1<\cdots<\hat s_{\hat m}=T$ such that
	\be\label{eq:phiTprojphiS}\phi(T)=\proj_{\z(\hat s_{\hat m})}\cdots\proj_{\z(\hat s_1)}\phi(S).\ee
\end{cor}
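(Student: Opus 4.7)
The plan is to obtain this corollary essentially as a direct relabeling of Lemma~\ref{lem:DPpsiconstantST}. Given that $\psi$ is constant on $[S,T]$, the lemma applies and produces, via the recursive construction \eqref{eq:rhokST}--\eqref{eq:tkST}, a finite sequence $S=\hat t_1<\hat\rho_1\leq\hat t_2<\hat\rho_2\leq\hat t_3<\cdots\leq\hat t_K=T$ together with the identity
\begin{equation*}
\phi(T)=\proj_{\z(T)}\proj_{\z(\hat t_{K-1})}\cdots\proj_{\z(\hat t_2)}\phi(S).
\end{equation*}
Note that the chain of inequalities in the statement of Lemma~\ref{lem:DPpsiconstantST} already enforces strict inequality $\hat t_k<\hat t_{k+1}$ for each $k=1,\dots,K-1$ (since $\hat t_k<\hat\rho_k\leq\hat t_{k+1}$), which in turn is underpinned by the upper semicontinuity of $\allN(\cdot)$ from Lemma~\ref{lem:allNusc}. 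In particular, $K\geq2$, since $S<T$ forces $\hat\rho_1>\hat t_1$ and hence $\hat t_2\geq\hat\rho_1>S$.

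Next I would set $\hat m\doteq K-1\geq1$ and $\hat s_k\doteq\hat t_{k+1}$ for $k=1,\dots,\hat m$. The strict monotonicity just noted gives $S<\hat s_1<\hat s_2<\cdots<\hat s_{\hat m}$, and the endpoint identity $\hat t_K=T$ gives $\hat s_{\hat m}=T$. Substituting $\hat t_{k+1}=\hat s_k$ into the representation produced by the lemma (with the last projection $\proj_{\z(T)}=\proj_{\z(\hat s_{\hat m})}$) yields exactly
\begin{equation*}
\phi(T)=\proj_{\z(\hat s_{\hat m})}\proj_{\z(\hat s_{\hat m-1})}\cdots\proj_{\z(\hat s_1)}\phi(S),
\end{equation*}
which is the claim.

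There is no real obstacle here: the corollary merely discards the explicit definition of the auxiliary times $\hat t_k,\hat\rho_k$ and records only the data needed downstream, namely an unspecified finite set of projection times in $(S,T]$ along with the product-of-projections formula. This repackaged form is presumably what will be iterated over the pieces of a simple $\psi\in\simple^\z(\R^J)$ in the proof of Lemma~\ref{lem:psisimple}.
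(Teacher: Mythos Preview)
Your proposal is correct and matches the paper's approach: the corollary is stated without a separate proof in the paper, precisely because it is an immediate relabeling of Lemma~\ref{lem:DPpsiconstantST} via $\hat m\doteq K-1$ and $\hat s_k\doteq\hat t_{k+1}$, exactly as you write. Your check that $K\geq2$ (so $\hat m\geq1$) and the strict monotonicity $S<\hat s_1<\cdots<\hat s_{\hat m}=T$ are the only details worth recording, and you handle them correctly.
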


In Lemma \ref{lem:phisimpletheta_2} below, we characterize solutions to the DP along $\z$ for simple $\psi$ up until the first time $\z$ hits the intersection of two or more faces. In the next section we use an inductive argument (on the first time $\z$ hits $n$ or more faces) to prove continuity properties of the derivative map. With that in mind, given a solution $(\z,\y)$ to the ESP for $\x\in\cts_G(\R^J)$, define
\be\label{eq:thetan}\theta_n\doteq\inf\{t\geq 0:|\allN(\z(t))|\geq n\},\qquad n=1,\dots,N,\ee
and set $\theta_{N+1}\doteq\infty$. Suppose $\psi\in\simple(\R^J)$ is of the form
\be\label{eq:psisimple}\psi=\sum_{j=1}^my_j1_{[s_j,s_{j+1})}\ee
for some $m\in\N$, $y_1,\dots,y_m\in\R^J$ such that $y_j\neq y_{j+1}$ for $1\leq j<m$, and $0=s_1<\cdots<s_{m+1}<\infty$ with $s_{m+1}\leq\theta_2$. For each $1\leq j\leq m$, define $K_j\in\N_\infty$ and the sequence $\{\rho_j(k)\}_{k=1,\dots,K_j}$ in $[s_j,s_{j+1}]$ as follows: set $\rho_j(1)\doteq s_j$ and for $k\in\N$ such that $\rho_j(k)<s_{j+1}$, recursively define
\be\label{eq:tkj}\rho_j(k+1)\doteq\inf\{t>\rho_j(k):\allN(\z(t))\not\subseteq\allN(\z(\rho_j(k)))\}\wedge s_{j+1}.\ee
If $\rho_j(k)=s_{j+1}$ for some $k\in\N$, then set $K_j\doteq k$ and end the sequence so that
\be\label{eq:sjsj1}s_j=\rho_j(1)<\cdots<\rho_j(K_j)=s_{j+1}.\ee
On the other hand, if $\rho_j(k)<s_{j+1}$ for all $k\in\N$, then set $K_j\doteq\infty$. In the case that $K_j=\infty$, since $\rho_j(k)$ is increasing and bounded above by $s_{j+1}$, there exists $\rho_j(\infty)\leq s_{j+1}$ such that $\rho_j(k)\to \rho_j(\infty)$ as $k\to\infty$. If $\rho_j(\infty)=\infty$, then we must have $s_{j+1}=\infty$, and hence,  $\theta_2=\infty$. Alternatively, if $\rho_j(\infty)<\infty$, then the definition of $\rho_j(k)$ given in \eqref{eq:tkj} and the continuity of $\z$ imply that $\allN(\z(\rho_j(\infty)))|\geq2$, which along with the fact that $\rho_j(k)\leq\theta_2$ for all $k\in\N$ implies $\rho_j(\infty)=\theta_2$. In either case, we see that if $K_j=\infty$, then $\rho_j(k)\to\theta_2$ as $k\to\infty$. Hence, due to the fact that $s_{m+1}=\theta_2$, we have $K_j=\infty$ only if $j=m$. In this case,
\be\label{eq:smtheta_2}s_m=\rho_m(1)<\rho_m(2)<\cdots<s_{m+1}=\theta_2.\ee

\begin{lem}\label{lem:phisimpletheta_2}
Given $\x\in\cts_G(\R^J)$, let $(\z,\y)$ denote the solution to the ESP for $\x$ and define $\theta_2$ as in \eqref{eq:thetan}. Suppose $\psi\in\simple(\R^J)$ satisfies \eqref{eq:psisimple}. For each $1\leq j\leq m$, define $K_j\in\N_\infty$ and the sequence $\{\rho_j(k)\}_{k=1,\dots,K_j}$ as in \eqref{eq:tkj}. Then the solution $(\phi,\eta)$ to the derivative problem along $\z$ for $\psi$ on $[0,\theta_2)$ satisfies, for each $1\leq j\leq m$,
	\be\label{eq:phisimpletheta_2}\phi(t)=
	\proj_{\z(\rho_j(k))}\cdots\proj_{\z(\rho_j(1))}(\phi(s_j-)+y_j)\qquad\text{for }t\in[\rho_j(k),\rho_j(k+1)),\;1\leq k<K_j,
	\ee
where $\phi(0-)\doteq0$.
\end{lem}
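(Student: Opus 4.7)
The plan is a double induction: outer on $j\in\{1,\ldots,m\}$ and inner on $k\in\{1,\ldots,K_j-1\}$, built on top of two local arguments that together determine the evolution of $\phi$ on the interval $[s_j,s_{j+1})$.

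\emph{First building block: constancy of $\phi$ on each subinterval $[\rho_j(k),\rho_j(k+1))$.} On such an interval $\psi\equiv y_j$ and, by the recursive definition \eqref{eq:tkj}, $\allN(\z(t))\subseteq\allN(\z(\rho_j(k)))$ for every $t$ in the interval. Applying Lemma \ref{lem:DPpsiconstantST} with $S=\rho_j(k)$ and an arbitrary $T\in(\rho_j(k),\rho_j(k+1))$, the nested sequence constructed there collapses to $\hat t_1=S$, $\hat t_2=T$ (so $K=2$), and the conclusion reduces to $\phi(T)=\proj_{\z(T)}\phi(\rho_j(k))$. The inclusion $\allN(\z(T))\subseteq\allN(\z(\rho_j(k)))$ forces $\hyper_{\z(\rho_j(k))}\subseteq\hyper_{\z(T)}$ via \eqref{eq:Hx}, so $\proj_{\z(T)}$ is the identity on $\hyper_{\z(\rho_j(k))}$ and $\phi(T)=\phi(\rho_j(k))$; letting $T$ vary yields the claimed constancy.

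\emph{Second building block: projection identities at transition and partition points.} At a transition time $\rho_j(k+1)$ with $k+1<K_j$, $\psi$ is continuous across $\rho_j(k+1)$, so by the first building block $\phi(\rho_j(k+1)-)=\phi(\rho_j(k))$; conditions (1), (2), (4) of the derivative problem then yield $\phi(\rho_j(k+1))-\phi(\rho_j(k))\in\spaan[d(\z(\rho_j(k+1)))]$ with $\phi(\rho_j(k+1))\in\hyper_{\z(\rho_j(k+1))}$, and the uniqueness in Lemma \ref{lem:projx} identifies $\phi(\rho_j(k+1))=\proj_{\z(\rho_j(k+1))}\phi(\rho_j(k))$. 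The same uniqueness argument applied at the partition point $s_j=\rho_j(1)$, where $\psi$ genuinely jumps, uses the identity
$\phi(s_j)-\phi(s_j-)-(\psi(s_j)-\psi(s_j-))=\eta(s_j)-\eta(s_j-)\in\spaan[d(\z(s_j))]$
together with $\phi(s_j)\in\hyper_{\z(s_j)}$ to supply, via Lemma \ref{lem:projx}, the $k=1$ base case of the inner induction in the form stated by the lemma (with the conventions $\phi(0-)=0$ and $\psi(0-)=0$ at $s_1=0$).

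Combining these two ingredients, the inner induction on $k$ produces the product-of-projections formula on each $[\rho_j(k),\rho_j(k+1))$ for fixed $j$, and the outer induction on $j$ is carried out by taking the limit $t\uparrow s_{j+1}$ in the formula for $j$ to obtain $\phi(s_{j+1}-)$ as an explicit iterated projection, which is then fed into the jump argument at $s_{j+1}=\rho_{j+1}(1)$. The only subtlety is the accumulation case $K_j=\infty$, which by the discussion preceding the lemma can occur only for $j=m$ with $\rho_m(k)\uparrow\theta_2$; but since the formula is asserted only on $[\rho_j(k),\rho_j(k+1))$ for each individual $k<K_j$ and neither building block uses finiteness of $K_j$, this case is handled at no extra cost. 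The main conceptual work is the first building block: verifying that the nested sequence of Lemma \ref{lem:DPpsiconstantST} collapses correctly over the partition $\{\rho_j(k)\}$ and that the resulting outer projection $\proj_{\z(T)}$ acts trivially on $\hyper_{\z(\rho_j(k))}$; once this is in place the remaining inductions are routine applications of Lemma \ref{lem:projx}.
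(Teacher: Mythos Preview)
Your approach is correct but differs from the paper's. The paper proceeds by direct verification: it \emph{defines} $\phi$ via formula \eqref{eq:phisimpletheta_2}, sets $\eta=\phi-\psi$, and checks that the pair satisfies conditions 1--3 of the derivative problem on $[0,\theta_2)$ (condition 4 being automatic for continuous $\z$). You instead \emph{derive} the formula from the assumed solution: Lemma~\ref{lem:DPpsiconstantST} gives constancy on each $[\rho_j(k),\rho_j(k+1))$, and the jump conditions at $\rho_j(k+1)$ and $s_j$ supply the projection identities via Lemma~\ref{lem:projx}. Both routes are valid. Yours is slightly heavier machinery---you invoke Lemma~\ref{lem:DPpsiconstantST} where the paper essentially reproves the relevant constancy step inline---but it is more conceptual, since it explains where the formula comes from rather than verifying a candidate pulled from thin air. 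The paper's route, on the other hand, avoids the double induction entirely: once the candidate is written down, checking the three conditions is a single pass.

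One discrepancy worth flagging: your jump computation at $s_j$ correctly yields
\[
\phi(s_j)=\proj_{\z(s_j)}\bigl(\phi(s_j-)+\psi(s_j)-\psi(s_j-)\bigr)=\proj_{\z(s_j)}\bigl(\phi(s_j-)+y_j-y_{j-1}\bigr)
\]
for $j>1$, not $\proj_{\z(s_j)}(\phi(s_j-)+y_j)$ as written in \eqref{eq:phisimpletheta_2}. This appears to be a slip in the paper's statement (its own verification makes the same error where it asserts ``$\psi(s_j)=\psi(s_j-)+y_j$''); the input to the projection should be $\phi(s_j-)$ plus the \emph{jump} of $\psi$ at $s_j$, and your derivation produces exactly that. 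So you should not claim your argument recovers ``the form stated by the lemma'' without noting this correction.
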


\begin{proof}
	In order to prove the lemma, it suffices to show that $(\phi,\phi-\psi)$, where $\phi$ is defined as in \eqref{eq:phisimpletheta_2}, satisfies conditions 1--3 of the derivative problem along $\z$ for $\psi$ on the interval $[0,\theta_2)$ (since condition 4 follows from condition 3 when $\z$ is continuous). Condition 1 holds automatically. We next show that condition 2 holds. Given $t\in[0,\theta_2)$ let $1\leq j\leq m$ and $1\leq k<K_j$ be such that $t\in[\rho_j(k),\rho_j(k+1))$. By \eqref{eq:tkj}, $\allN(\z(t))\subseteq\allN(\z(\rho_j(k)))$. Therefore, due to the fact that $\proj_{\z(\rho_j(k))}$ maps $\R^J$ into $\hyper_{\z(\rho_j(k))}$, it follows that 
	$$\phi(t)=\proj_{\z(\rho_j(k))}\cdots\proj_{\z(\rho_j(1))}(\phi(s_j-)+y_j)\in \hyper_{\z(\rho_j(k))}\subseteq \hyper_{\z(t)}.$$
	Therefore, condition 2 of the derivative problem holds. We are left to show that condition 3 of the derivative problem holds. First observe that for $1\leq j\leq m$ and $1\leq k<K_j$, \eqref{eq:psisimple} and \eqref{eq:phisimpletheta_2} imply that $\psi$ and $\phi$ are constant on $[\rho_j(k),\rho_j(k+1))$, and hence $\eta\doteq\phi-\psi$ is constant there as well. Therefore, it suffices to show that 
	\be\label{eq:etatjketatjkminus}\eta(\rho_j(k))-\eta(\rho_j(k)-)\in\spaan[d(\z(\rho_j(k)))]\ee 
	for each $1\leq j\leq m$ and $1\leq k<K_j$. Suppose $1\leq j\leq m$ and $k=1$. Then $\rho_j(1)=s_j$ by definition, so the facts that $\eta\doteq\phi-\psi$, $\phi(s_j)=\proj_{\z(s_j)}(\phi(s_j-)+y_j)$ by \eqref{eq:phisimpletheta_2}, $\psi(s_j)=\psi(s_j-)+y_j$ by \eqref{eq:psisimple} and $\proj_{\z(s_j)}(\phi(s_j-)+y_j)-(\phi(s_j-)+y_j)\in\spaan[d(\z(s_j))]$ by Lemma \ref{lem:projx}, together imply
	\begin{align*}
	\eta(\rho_j(1))-\eta(\rho_j(1)-)&=\phi(s_j)-\phi(s_j-)-(\psi(s_j)-\psi(s_j-))\\ 
	&=\proj_{\z(s_j)}(\phi(s_j-)+y_j)-(\phi(s_j-)+y_j)\\
	&\in\spaan[d(\z(\rho_j(1)))].
	\end{align*}
	Next, suppose $1\leq j\leq m$ and $1<k<K_j$. Then the facts that $\eta\doteq\phi-\psi$, $\phi(\rho_j(k))=\proj_{\z(\rho_j(k))}(\phi(\rho_j(k)-))$ by \eqref{eq:phisimpletheta_2}, $\psi$ is continuous at $\rho_j(k)$ by \eqref{eq:psisimple} and $\proj_{\z(\rho_j(k))}(\phi(\rho_j(k)-))-\phi(\rho_j(k)-)\in\spaan[d(\z(\rho_j(k))]$ together imply
	\begin{align*}
	\eta(\rho_j(k))-\eta(\rho_j(k)-)&=\phi(\rho_j(k))-\phi(\rho_j(k)-)\\ 
	&=\proj_{\z(\rho_j(k))}\phi(\rho_j(k)-)-\phi(\rho_j(k)-)\\ 
	&\in\spaan[d(\z(\rho_j(k)))].
	\end{align*}
	This verifies \eqref{eq:etatjketatjkminus}, and thus proves the lemma.
\end{proof}

\subsection{Proof of Lemma \ref{lem:psisimple}}\label{sec:psicts}

We use the principle of mathematical induction to prove the following statement holds for $n=2,\dots,N+1$. Since $\theta_{N+1}\doteq\infty$ by definition, this will complete the proof of Lemma \ref{lem:psisimple}. Given $T<\infty$, we say that a function $\lambda:[0,T)\to[0,T)$ is a \emph{time-change} if $\lambda$ is nondecreasing and onto.

\begin{statement}\label{state:inductioncts}
Given $\x\in\cts_G(\R^J)$, suppose that the solution $(\z,\y)$ to the ESP for $\x$ satisfies the boundary jitter property (Definition \ref{def:jitter}). Let $\{\x_\ellindex\}_{\ellindex\in\N}$ be a sequence in $\dr(\R^J)$ such that $\x_\ellindex$ converges to $\x$ in $\dr(\R^J)$ as $\ellindex\to\infty$ and for each $\ellindex\in\N$, let $(\z_\ellindex,\y_\ellindex)$ denote the solution to the ESP for $\x_\ellindex$. Then given $\psi\in\simple^\z(\R^J)$, there is a sequence $\{\lambda_\ellindex\}_{\ellindex\in\N}$ of time-changes mapping $[0,\theta_n)$ to $[0,\theta_n)$ such that for all $T<\infty$,
	\be\label{eq:timechange1thetan}\lim_{\ellindex\to\infty}\sup_{t\in[0,\theta_n\wedge T)}|\lambda_\ellindex(t)-t|=0,\ee
and for all $T<\theta_n$, 
	\be\label{eq:timechange2thetan}\lim_{\ellindex\to\infty}\sup_{t\in[0,T]}\left|\dm_{\z_\ellindex}(\psi)(\lambda_\ellindex(t))-\dm_\z(\psi)(t)\right|=0.\ee
\end{statement}

We first prove the base case $n=2$.

\begin{lem}\label{lem:theta_2psiconst}
Statement \ref{state:inductioncts} holds with $n=2$.
\end{lem}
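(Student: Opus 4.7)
The plan is to leverage the explicit piecewise-constant description of $\phi=\dm_\z(\psi)$ supplied by Lemma~\ref{lem:phisimpletheta_2} and to produce an analogous description of $\phi_\ellindex=\dm_{\z_\ellindex}(\psi)$ whose successive values can be matched to those of $\phi$ after a small time change. Write $\psi=\sum_{j=1}^m y_j 1_{[s_j,s_{j+1})}$ as in~\eqref{eq:psisimple} and fix $T<\theta_2$. Since $T<\theta_2$ rules out accumulation of the sequences $\{\rho_j(k)\}_k$, the critical list on $[0,T]$ consisting of the jump times $s_j$ of $\psi$ and the face-transition times $\rho_j(k)$ from~\eqref{eq:tkj} is finite; enumerate it as $0\leq\tau_1<\cdots<\tau_p\leq T$. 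Crucially, on $[0,\theta_2)$ the value $\z(t)$ lies on at most one face, so every projection $\proj_{\z(\rho_j(k))}$ appearing in~\eqref{eq:phisimpletheta_2} reduces either to the identity (when $\z(\rho_j(k))\in G^\circ$) or to a face-indexed linear projection $P_i$ that depends only on the face index, and $P_i$ is idempotent, so repeated contacts with the same face do not alter the iterated product.

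Next I would exploit the Lipschitz continuity of the ESM (Proposition~\ref{prop:sp}(ii)) together with the fact that $J_1$-convergence of $\x_\ellindex$ to the continuous limit $\x$ implies uniform convergence on compact subsets, concluding that $\z_\ellindex\to\z$ uniformly on $[0,T+1]$. Combined with the boundary jitter property of $(\z,\y)$ from Definition~\ref{def:jitter}, which forces each face visit by $\z$ to be a genuine transition rather than a tangential graze, this uniform convergence yields, for each sufficiently large $\ellindex$, a list $0\leq\tau_1^\ellindex<\cdots<\tau_p^\ellindex$ of critical times for $\z_\ellindex$ with $\tau_q^\ellindex\to\tau_q$ and with $\z_\ellindex(\tau_q^\ellindex)$ lying on the same face as $\z(\tau_q)$ (or with both in $G^\circ$). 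I would then define $\lambda_\ellindex:[0,\theta_2)\to[0,\theta_2)$ as the nondecreasing, surjective, piecewise-linear interpolation sending $\tau_q\mapsto\tau_q^\ellindex$ and extended as the identity outside a bounded horizon, so that $\lambda_\ellindex$ is a time-change satisfying~\eqref{eq:timechange1thetan}.

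To verify~\eqref{eq:timechange2thetan}, I would argue inductively on $q=1,\dots,p$. An adaptation of the argument behind Lemma~\ref{lem:phisimpletheta_2} to the RCLL setting, valid thanks to the face-indexed projection reduction isolated above, represents $\phi_\ellindex$ on the intervals between consecutive $\tau_q^\ellindex$'s as the same iterated product of face-indexed projections applied to a seed vector $\phi_\ellindex(s_j^\ellindex-)+y_j$ that governs $\phi$ on the corresponding intervals between the $\tau_q$'s. Since the face-indexed projections match those governing $\phi$ and the seed vectors converge by the inductive hypothesis, the uniform convergence of $\phi_\ellindex\circ\lambda_\ellindex$ to $\phi$ on $[0,T]$ follows from the operator-norm boundedness of the iterated projections and the finiteness of the critical list.

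The hard part will be ruling out spurious boundary contacts by $\z_\ellindex$ at points away from the $\tau_q^\ellindex$, which would insert extraneous projection factors into the formula for $\phi_\ellindex$. The resolution rests on condition~2' of the jitter property together with uniform convergence: $\z$ lies in $G^\circ$ on a dense subset of $[0,T]\setminus\{\tau_q\}$, so that outside small neighborhoods of the $\tau_q$ the value $\z(t)$ is bounded away from every face, a gap preserved by $\z_\ellindex$ for large $\ellindex$; condition~1 then excludes the tangential-boundary pathologies that would otherwise complicate the identification of the $\tau_q^\ellindex$.
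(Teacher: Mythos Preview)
Your approach is essentially the paper's: both arguments exploit the explicit representation of $\dm_\z(\psi)$ from Lemma~\ref{lem:phisimpletheta_2}, match the face-transition times of $\z_\ellindex$ to those of $\z$ via the uniform convergence $\z_\ellindex\to\z$ inherited from Lipschitz continuity of the ESM, construct piecewise-linear time-changes, and rely on the idempotency of the single-face projections. Two points need correction.

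First, your argument for ruling out spurious boundary contacts contains a false step. You claim that ``outside small neighborhoods of the $\tau_q$ the value $\z(t)$ is bounded away from every face''. This is not true: the definition of $\rho_j(k+1)$ in~\eqref{eq:tkj} only guarantees $\allN(\z(t))\subseteq\{i_j(k)\}$ on $[\rho_j(k),\rho_j(k+1))$, so $\z$ may touch the face $F_{i_j(k)}$ at arbitrarily many times in that interval, far from both endpoints; condition~2' of the jitter property gives zero Lebesgue measure on $\partial G$ but does not prevent such contacts. The correct argument---which the paper carries out via intermediate points $\xi_j(k)\in(\rho_j(k),\rho_j(k+1))$ and the upper semicontinuity of $\allN(\cdot)$ from Lemma~\ref{lem:allNusc}---is that on each such interval $\z$ is bounded away from every face \emph{other than} $F_{i_j(k)}$, and hence so is $\z_\ellindex$ for large $\ellindex$. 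Spurious contacts of $\z_\ellindex$ with $F_{i_j(k)}$ itself are then harmless by the idempotency you already noted, while contacts with other faces are excluded. You have the right mechanism in mind but stated the wrong geometric fact.

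Second, you fix $T<\theta_2$ at the outset and build the time-change from the finite critical list on $[0,T]$, then ``extend as the identity outside a bounded horizon''. Statement~\ref{state:inductioncts} requires a \emph{single} sequence $\{\lambda_\ellindex\}$, independent of $T$, satisfying~\eqref{eq:timechange1thetan} for all $T<\infty$ and~\eqref{eq:timechange2thetan} for all $T<\theta_2$. When $K_m=\infty$ the face-transition times $\rho_m(k)$ accumulate at $\theta_2$, so no finite critical list suffices for all $T$ simultaneously. The paper handles this by letting the number of matched transitions grow with $\ellindex$ (the truncation index $\hat k$ is tied to $\ellindex$ via the sequence $\{\ellindex_{\hat k}\}$, not to $T$) and treating the tail near $\theta_2$ separately; a diagonalization would also work. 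As written, your construction yields a $T$-dependent sequence.
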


Since the proof of the base case is lengthy, we first provide a brief outline of the argument. First, we show that because $(\z,\y)$ satisfies the boundary jitter property and $\x_\ellindex$ converges to $\x$ as $\ellindex\to\infty$, it follows that, roughly speaking, on a compact interval $[0,T]$ contained in $[0,\theta_2)$ and for $\ellindex$ sufficiently large, $\z_\ellindex$ hits the same boundary faces that $\z$ hits, and $\z_\ellindex$ hits those boundary faces in the same order that $\z$ does. The time-change $\lambda_{\ellindex}$ is then constructed so that, again roughly speaking, on $[0,T]$, $\psi\circ\lambda_\ellindex$ jumps at the same time that $\psi$ jumps, and $\z_\ellindex\circ\lambda_\ellindex$ hits the same boundary faces that $\z$ hits, and both paths, $\z_\ellindex\circ\lambda_\ellindex$ and $\z$, hit those boundary faces in the same order and at the same times. The paths $\dm_{\z_\ellindex}(\psi)\circ\lambda_\ellindex$ and $\dm_{\z}(\psi)$ will then be equal on $[0,T]$ since the jumps of both paths depend only on the jump times and jump sizes of $\psi\circ\lambda_\ellindex$ and $\psi$, respectively, and on the times that $\z_\ellindex\circ\lambda_\ellindex$ and $\z$, respectively, hit the boundary faces.

\begin{proof}[Proof of Lemma \ref{lem:theta_2psiconst}]
Let $\x$, $(\z,\y)$, $\{\x_\ellindex\}_{\ellindex\in\N}$, $\{(\z_\ellindex,\y_\ellindex)\}_{\ellindex\in\N}$ and $\psi$ be as in Statement \ref{state:inductioncts}. Define $\theta_2$ as in \eqref{eq:thetan}. If $\theta_2=0$, then \eqref{eq:timechange1thetan} and \eqref{eq:timechange2thetan} hold trivially. For the remainder of the proof we assume that $\theta_2>0$. Since $\x_\ellindex$ converges to $\x$ in $\dr(\R^J)$ as $j\to\infty$, $\x\in\cts_G(\R^J)$ and the Skorokhod $J_1$-topology relativized to $\cts(\R^J)$ coincides with the topology of uniform convergence on compact intervals, the Lipschitz continuity of the ESM stated in Proposition \ref{prop:sp}(ii) implies the following: 
	\begin{itemize}
		\item[(a)] $(\z_\ellindex,\y_\ellindex)$ converges to $(\z,\y)$ uniformly on compact intervals as $\ellindex\to\infty$.
	\end{itemize}
Since we are only concerned with the interval $[0,\theta_2)$ in this proof and $\psi\in\simple^\z(\R^J)$, we can assume without loss of generality that $\psi$ is of the form \eqref{eq:psisimple} for some $m\in\N$, $y_1\in \hyper_{\z(0)}$, $y_2,\dots,y_m\in\R^J$ such that $y_j\neq y_{j+1}$ for $1\leq j<m$, and $0=s_1<\cdots<s_{m+1}<\infty$ with $s_{m+1}\leq\theta_2$. For each $1\leq j\leq m$, define $K_j\in\N_\infty$ and the sequence $\{\rho_j(k)\}_{k=1,\dots,K_j}$ in $[s_j,s_{j+1}]$ as in \eqref{eq:tkj} and the subsequent two sentences. As noted following \eqref{eq:tkj}, $K_j<\infty$ for each $1\leq j<m$ and
	\begin{align}\label{eq:tmKm}
	\rho_m(K_m)&=\theta_2=\infty&&\text{if }K_m<\infty,\\ \label{eq:limktmk}
	\lim_{k\to\infty}\rho_m(k)&=\theta_2&&\text{if }K_m=\infty.
	\end{align}
By the definition of $\rho_j(k)$ in \eqref{eq:tkj}, the definition of $\theta_2$ in \eqref{eq:thetan} and condition 1 of the boundary jitter property, for each $1\leq j\leq m$ and $1<k<K_j$, there is a unique index $i_j(k)\in\allN$ such that 
	\begin{itemize}
		\item[(b)] $\allN(\z(\rho_j(k)))=\{i_j(k)\}$ and $\y$ is nonconstant on every neighborhood of $\rho_j(k)$.
	\end{itemize}
By the upper semicontinuity of $\allN(\cdot)$ shown in Lemma \ref{lem:allNusc}, the continuity of $\z$, the definition of $\rho_j(k)$ in \eqref{eq:tkj} and (b), for each $1\leq j\leq m$ and $1\leq k<K_j$, there exists $\xi_j(k)\in(\rho_j(k),\rho_j(k+1))$, which is not necessarily unique, such that
	\begin{align}\label{eq:zGcircsjxj1}
	\z(t)&\in G^\circ,&&t\in[s_j,\xi_j(1)],&&1\leq j\leq m,\\ \label{eq:zGcircxjk1tjk}
	\z(t)&\in G^\circ,&&t\in[\xi_j(k-1),\rho_j(k)),&&1\leq j\leq m,\;1<k<K_j,\\ \label{eq:allNztjkxijk}
	\allN(\z(t))&\subseteq\{i_j(k)\},&&t\in[\rho_j(k),\xi_j(k)],&&1\leq j\leq m,\;1<k<K_j,\\ \label{eq:zGcircxijKj1sj1}
	\z(t)&\in G^\circ,&&t\in[\xi_j(K_j-1),s_{j+1}],&&1\leq j<m.
	\end{align}
Set $\ellindex_1\doteq1$. By \eqref{eq:zGcircsjxj1}--\eqref{eq:zGcircxijKj1sj1}, the upper semicontinuity of $\allN(\cdot)$, (b) and (a), for each $1<\hat k<K_m$, we can choose $\ellindex_{\hat k}\geq\ellindex_{\hat k-1}$ such that for all $\ellindex\geq\ellindex_{\hat k}$,
	\begin{align}\label{eq:allNzell0xi11}
	\z_\ellindex(t)&\in G^\circ,&&t\in[s_j,\xi_j(1)],&&1\leq j\leq m,\\ \label{eq:zellGcirctj1xij1}
	\allN(\z_\ellindex(t))&\subseteq\{i_j(k)\},&&t\in[\xi_j(k-1),\xi_j(k)],&&1\leq j<m,\;1< k<K_j,\\ 
	\z_\ellindex(t)&\in G^\circ,&&t\in[\xi_j(K_j-1),s_{j+1}],&&1\leq j<m,\\ \label{eq:allNzelsubsetimk}
	\allN(\z_\ellindex(t))&\subseteq\{i_m(k)\},&&t\in[\xi_m(k-1),\xi_m(k)],&&1< k\leq \hat k,
	\end{align}
and 
	\begin{itemize}
		\item[(c)] $\y_\ellindex$ is nonconstant on $(\xi_j(k-1),\xi_j(k))$ for $1\leq j<m$ and $1<k<K_j$,
		\item[(d)] $\y_\ellindex$ is nonconstant on $(\xi_m(k-1),\xi_m(k))$ for $1<k\leq\hat{k}$.
	\end{itemize}
	By \eqref{eq:zellGcirctj1xij1}, \eqref{eq:allNzelsubsetimk}, (c), (d) and the fact that $\y_\ellindex$ can only increase when $\z_\ellindex$ lies in $\partial G$, we have
	\begin{itemize}
		\item[(e)] for each $1\leq j<m$ and $1<k<K_j$, $\allN(\z_\ellindex(t))=\{i_j(k)\}$ for some $t\in(\xi_j(k-1),\xi_j(k))$,
		\item[(f)] for each $1<k\leq\hat k$, $\allN(\z_\ellindex(t))=\{i_m(k)\}$ for some $t\in(\xi_m(k-1),\xi_m(k))$.
	\end{itemize}
Given $\ellindex\in\N,$ define $\theta_2^\ellindex\doteq\inf\{t\geq0:|\allN(\z_\ellindex(t))|\geq 2\}$ and for each $1\leq j\leq m$, define $K_j^\ellindex\in\N_\infty$ and the sequence $\{\rho_j^\ellindex(k)\}_{k=1,\dots,K_j^\ellindex}$ in $[s_j,s_{j+1}]$ as follows: set $\rho_j^\ellindex\doteq s_j$ and, for $k\in\N$ such that $\rho_j^\ellindex(k)<s_{j+1}$, define 
	\be\label{eq:tkjell}\rho_j^\ellindex(k+1)\doteq\inf\{t>\rho_j^\ellindex(k):\allN(\z_\ellindex(t))\not\subseteq\allN(\z_\ellindex(\rho_j(k)))\}\wedge s_{j+1}.\ee
If $\rho_j^\ellindex(k)=s_{j+1}$ for some $k\in\N$, then set $K_j^\ellindex\doteq s_{j+1}$. Alternatively, if $\rho_j^\ellindex(k)<s_{j+1}$ for all $k\in\N$, then set $K_j^\ellindex\doteq\infty$. Let $1\leq\hat k<K_m$ and suppose $\ellindex\geq\ellindex_{\hat{k}}$. Then \eqref{eq:tkjell}, the definition of $K_j^\ellindex$ for $1\leq j\leq m$, \eqref{eq:allNzell0xi11}--\eqref{eq:allNzelsubsetimk}, (e) and (f) imply that 
\begin{itemize}
		\item[(g)] $K_j^\ellindex=K_j$ for $1\leq j<m$,
		\item[(h)] $\rho_j^\ellindex(k)\in(\xi_j(k-1),\xi_j(k))$ and $\allN(\z_\ellindex(\rho_j^\ellindex(k)))=\{i_j(k)\}=\allN(\z(\rho_j(k)))$ for $1\leq j<m$ and $1< k< K_j$,
		\item[(i)] $\rho_m^\ellindex(k)\in(\xi_m(k-1),\xi_m(k))$ and $\allN(\z_\ellindex(\rho_m^\ellindex(k)))=\{i_m(k)\}=\allN(\z(\rho_m(k)))$ for $1<k\leq\hat k$.
\end{itemize}
Now fix $1\leq j\leq m$ and $1<k<K_j$. We show that 
	\be\label{eq:tkjtotk}\lim_{\ellindex\to\infty}\rho_j^\ellindex(k)=\rho_j(k).\ee 
Since $\xi_j(k-1)<\rho_j(k)<\xi_j(k)$, we can choose $\delta>0$ sufficiently small so that $(\rho_j(k)-\delta,\rho_j(k)+\delta)\subseteq[\xi_j(k-1),\xi_j(k)]$. Let $1\leq\hat k<K_m$ be such that $\rho_j(k)\leq \rho_m(\hat k)$ (e.g., if $j<m$, choose $\hat k=1$; and if $j=m$, choose $\hat k=k$). By (a), \eqref{eq:zGcircxjk1tjk}, (b), (c) and (d), there exists a positive integer $\ellindex^\delta\geq \ellindex_{\hat k}$ sufficiently large such that if $\ellindex\geq\ellindex^\delta$, then $\z_\ellindex(t)\in G^\circ$ for $t\in[\xi_j(k-1),\rho_j(k)-\delta]$ and $\y_\ellindex$ is nonconstant on $(\rho_j(k)-\delta,\rho_j(k)+\delta)$. This, along with (e) and (f), implies that $\rho_j^\ellindex(k)\in(\rho_j(k)-\delta,\rho_j(k)+\delta)$ for all $\ellindex\geq\ellindex^\delta$. Since $\delta>0$ was arbitrary, \eqref{eq:tkjtotk} holds.
	
We now define the time-changes $\lambda_\ellindex:[0,\theta_2)\mapsto[0,\theta_2)$. Let $\{\ellindex_{\hat k}\}_{1\leq\hat{k}<K_m}$ be such that for each $1\leq\hat{k}<K_m$ and all $\ellindex\geq\ellindex_{\hat{k}}$, \eqref{eq:allNzell0xi11}--\eqref{eq:allNzelsubsetimk} hold. Fix $\ellindex\in\N$. If $\ellindex<\ellindex_1$, then set $\lambda_\ellindex(t)\doteq t$ for all $t\in[0,\theta_2)$. Now suppose $\ellindex\geq\ellindex_1$. Let $1\leq\hat{k}<K_m$ be such that $\ellindex_{\hat{k}}\leq\ellindex<\ellindex_{\hat{k}+1}$. For $1\leq j<m$ and $1\leq k< K_j$, and for $j=m$ and $1\leq k< \hat{k}$, define
	\be\label{eq:lambdaellttjktjk1}\lambda_\ellindex(t)\doteq \rho_j^\ellindex(k)+\frac{\rho_j^\ellindex(k+1)-\rho_j^\ellindex(k)}{\rho_j(k+1)-\rho_j(k)}(t-\rho_j(k)),\qquad t\in[\rho_j(k),\rho_j(k+1)),\ee
and, if $\theta_2<\infty$, define	
	\be\label{eq:lambdaelltheta_2}\lambda_\ellindex(t)\doteq \rho_m^\ellindex(\hat{k}+1)+\frac{\theta_2-\rho_m^\ellindex(\hat{k}+1)}{\theta_2-\rho_m(\hat{k}+1)}(t-\rho_m(\hat{k}+1)),\qquad t\in[\rho_m(\hat{k}+1),\theta_2),\ee
whereas if $\theta_2=\infty$, define
	\be\label{eq:lambdaellinfty}\lambda_\ellindex(t)\doteq t+(\rho_m^\ellindex(\hat k+1)-\rho_m(\hat{k}+1))e^{-(t-\rho_m(\hat{k}+1))},\qquad t\in[\rho_m(\hat{k}+1),\infty).\ee
It is readily verified from \eqref{eq:lambdaellttjktjk1}, \eqref{eq:lambdaelltheta_2} and \eqref{eq:lambdaellinfty} that for each $\ellindex\geq\ellindex_1$, $\lambda_\ellindex$ is a nondecreasing, continuous, piecewise linear function mapping $[0,\theta_2)$ onto $[0,\theta_2)$ such that for each $1\leq j\leq m$,
\begin{align}\label{eq:lambdatjk}
	\lambda_\ellindex(\rho_j(k))&=\rho_j^\ellindex(k),&&1<k<K_j,\\ \label{eq:lambdatmk}
	\lambda_\ellindex(\rho_m(k))&=\rho_m^\ellindex(k),&& 1<k\leq \hat{k}.
\end{align}
We now turn to the proofs of \eqref{eq:timechange1thetan} and \eqref{eq:timechange2thetan}.
	
We begin with the proof of \eqref{eq:timechange1thetan}. Let $T<\infty$. We first treat the case that $T<\theta_2$. By \eqref{eq:tmKm} and \eqref{eq:limktmk}, we can choose $1\leq\hat k<K_m$ such that $T<\rho_m(\hat k)$. Then by \eqref{eq:lambdaellttjktjk1}, \eqref{eq:lambdaelltheta_2} and \eqref{eq:lambdaellinfty}, for all $\ellindex\geq\ellindex_{\hat{k}}$,
	\be\label{eq:supsisi1}\sup_{t\in[0,T)}|\lambda_\ellindex(t)-t|\leq\max_{1<k\leq\hat k}|\rho_j^\ellindex(k)-\rho_j(k)|.\ee
Letting $\ellindex\to\infty$, it follows from \eqref{eq:tkjtotk} that \eqref{eq:timechange1thetan} holds. Next, consider the case that $\theta_2\leq T<\infty$. Let $\ve\in(0,\theta_2)$ be arbitrary. Since $\theta_2-\ve<\theta_2$ and \eqref{eq:timechange1thetan} holds whenever $T<\theta_2$, we have
	\be\label{eq:limjsuptheta_2ve}\lim_{\ellindex\to\infty}\sup_{t\in[0,\theta_2-\ve)}|\lambda_\ellindex(t)-t|=0.\ee
Therefore, it suffices to show that
	\be\label{eq:limvelimjuptheta_2ve}\lim_{\ve\downarrow0}\lim_{\ellindex\to\infty}\sup_{t\in[\theta_2-\ve,\theta_2)}|\lambda_\ellindex(t)-t|=0.\ee
By the triangle inequality and the fact that $\lambda_{\ellindex}:[0,\theta_2)\mapsto[0,\theta_2)$ is nondecreasing, we have, for all $\ve\in(0,\theta_2)$ and $t\in[\theta_2-\ve,\theta_2)$, 
\begin{align}\label{eq:lambdajitt}
	|\lambda_\ellindex(t)-t|&\leq|\lambda_\ellindex(t)-\theta_2|+|\theta_2-t|\\ \notag
	&\leq|\lambda_\ellindex(\theta_2-\ve)-\theta_2|+\ve\\ \notag
	&\leq|\lambda_\ellindex(\theta_2-\ve)-(\theta_2-\ve)|+2\ve.
\end{align}
By \eqref{eq:limjsuptheta_2ve} and the continuity of $\lambda_\ellindex$, $\lambda_\ellindex(\theta_2-\ve)\to\theta_2-\ve$ as $\ellindex\to\infty$, which along with \eqref{eq:lambdajitt} yields \eqref{eq:limvelimjuptheta_2ve}. This establishes \eqref{eq:timechange1thetan}.
	
We now turn to the proof of \eqref{eq:timechange2thetan}, which will complete the proof that Statement \ref{state:inductioncts} holds with $n=2$. Let $T<\theta_2$. By \eqref{eq:tmKm} and \eqref{eq:limktmk}, we can choose $1<\hat k<K_m$ such that $T<\rho_m(\hat k)$. We show that for each $\ellindex\geq\ellindex_{\hat{k}}$,
	\be\label{eq:dmzjpsilambdajdmzpsi}\dm_{\z_\ellindex}(\psi)(\lambda_\ellindex(t))=\dm_\z(\psi)(t)\qquad\text{for all }t\in[0,\rho_m(\hat k)),\ee
which will complete the proof of \eqref{eq:timechange2thetan}. Fix $\ellindex\geq\ellindex_{\hat{k}}$. By Lemma \ref{lem:phisimpletheta_2}, for each $1\leq j\leq m$ and $1\leq k<K_j$, $\dm_\z(\psi)$ satisfies
	\be\label{eq:dmzpsit}\dm_\z(\psi)(t)=\proj_{\z(\rho_j(k))}\cdots\proj_{\z(\rho_j(1))}(\dm_\z(\psi)(s_j-)+y_j),\ee
for all $t\in[\rho_j(k),\rho_j(k+1))$, where $\dm_\z(\psi)(0-)\doteq 0$. Similarly, by Lemma \ref{lem:phisimpletheta_2}, for each $1\leq j\leq m$ and $1\leq k<K_j^\ellindex=K_j$ (by (g)), $\dm_{\z_\ellindex}(\psi)$ satisfies
\begin{align}\label{eq:dmzjpsit}
	\dm_{\z_\ellindex}(\psi)(t)&=\proj_{\z_\ellindex(\rho_j^\ellindex(k))}\cdots\proj_{\z_\ellindex(\rho_j^\ellindex(1))}(\dm_{\z_\ellindex}(\psi)(s_j-)+y_j)\\ \notag
	&=\proj_{\z(\rho_j(k))}\cdots\proj_{\z(\rho_j(1))}(\dm_{\z_\ellindex}(\psi)(s_j-)+y_j),
\end{align}
for $t\in[\rho_j^\ellindex(k),\rho_j^\ellindex(k+1))$, where $\dm_{\z_\ellindex}(\psi)(0-)\doteq 0$ and we have used (h) and (i) in the second equality and properties of the derivative projection matrix in Lemma \ref{lem:projx}. By \eqref{eq:lambdatjk}, \eqref{eq:lambdatmk}, for $1\leq j<m$ and $1\leq k<K_j$, or for $j=m$ and $1\leq k<\hat k$,
	\begin{align}\label{eq:dmzellpsilambdaellt}
	\dm_{\z_\ellindex}(\psi)(\lambda_\ellindex(t))&=\proj_{\z(\rho_j(k))}\cdots\proj_{\z(\rho_j(1))}(\dm_{\z_\ellindex}(\psi)(s_j-)+y_j),\quad t\in[\rho_j(k),\rho_j(k+1)).
	\end{align}
Then since $\dm_{\z_\ellindex}(\psi)(0)=\psi(0)=\dm_\z(\psi)(0)$,  by \eqref{eq:dmzpsit}, \eqref{eq:dmzellpsilambdaellt} and a simple recursion argument, we see that \eqref{eq:dmzjpsilambdajdmzpsi} holds.
\end{proof}

Before proving the induction step, we need the following helpful lemma.

\begin{lem}[{\cite[Lemma 8.3]{Lipshutz2016}}]
\label{lem:projxcontract}
There is a norm on $\R^J$, denoted $\norm{\cdot}_B$, such that under $\norm{\cdot}_B$ the derivative projection matrix $\proj_x$ is a contraction for all $x\in G$; that is, $\norm{\proj_xy}_B\leq\norm{y}_B$ for all $x\in G$ and $y\in\R^J$. Furthermore, since norms on $\R^J$ are equivalent, there exists $C_B<\infty$ such that $\norm{y}_B\leq C_B|y|$ for all $y\in\R^J$.
\end{lem}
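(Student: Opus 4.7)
The plan is to take $\norm{\cdot}_B$ to be the Minkowski gauge of the set $B^\alpha$ supplied by Assumption \ref{ass:setB}, namely $\norm{y}_B := \inf\{r > 0 : y \in r B^\alpha\}$ for $y \in \R^J$. Since $B^\alpha$ is compact, convex, symmetric, and has $0$ in its interior, this is a bona fide norm on $\R^J$ whose closed unit ball is $B^\alpha$. The bound $\norm{y}_B \leq C_B |y|$ is then automatic from finite-dimensional norm equivalence: fixing any $r > 0$ with $\{y : |y| \leq r\} \subseteq B^\alpha$ (possible because $0 \in (B^\alpha)^\circ$), one may take $C_B := 1/r$. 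The whole content of the lemma thereby reduces to showing $\proj_x(B^\alpha) \subseteq B^\alpha$ for every $x \in G$, which is equivalent by homogeneity to the claim that $\proj_x$ is a contraction for $\norm{\cdot}_B$.

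To establish this inclusion, I would fix $x \in G$ with $\allN(x) \neq \emptyset$ (the case $\allN(x) = \emptyset$ being trivial since $\proj_x$ is then the identity by Lemma \ref{lem:projx}) and $y \in B^\alpha$, and examine the line segment $\gamma(t) := (1-t) y + t \proj_x y$, $t \in [0, 1]$. Two features of $\gamma$ are crucial: first, by Lemma \ref{lem:projx}, $\proj_x y - y$ lies in $\spaan\{d_i(\alpha) : i \in \allN(x)\}$, so this containment also holds for $\gamma(t) - y$ at every $t$; and second, for each $i \in \allN(x)$ one has $\ip{\gamma(t), n_i} = (1-t)\ip{y, n_i}$, so $|\ip{\gamma(t), n_i}|$ decreases monotonically from $|\ip{y, n_i}|$ to $0$. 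Defining $t^* := \sup\{t \in [0, 1] : \gamma([0, t]) \subseteq B^\alpha\}$, I would argue by contradiction that $t^* = 1$. If instead $t^* < 1$, the convexity of $B^\alpha$ and the affinity of $\gamma$ force $\gamma(t) \notin B^\alpha$ for every $t \in (t^*, 1]$, and a separation argument at $z^* := \gamma(t^*) \in \partial B^\alpha$ supplies an inward normal $\nu \in \nu_{B^\alpha}(z^*)$ with $\ip{\nu, \proj_x y - y} \leq 0$. Provided $|\ip{z^*, n_i}| < \delta(\alpha)$ for each $i \in \allN(x)$, Assumption \ref{ass:setB} yields $\ip{\nu, d_i(\alpha)} = 0$ for every such $i$, and expanding $\proj_x y - y$ as a linear combination of these $d_i(\alpha)$ gives $\ip{\nu, \proj_x y - y} = 0$, ruling out the exit.

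The main obstacle is precisely ensuring the strict threshold $|\ip{z^*, n_i}| < \delta(\alpha)$. Because $|\ip{z^*, n_i}| = (1-t^*)|\ip{y, n_i}|$, this is straightforward when $\sup_{w \in B^\alpha, i \in \allN} |\ip{w, n_i}| < \delta(\alpha)$, but Assumption \ref{ass:setB} does not by itself entail such a bound, and one checks that naive dilations of $B^\alpha$ scale $\delta(\alpha)$ identically and so cannot force the inequality. I would handle this by an approximation: replace $B^\alpha$ with an interior perturbation $B^\alpha_\ve \subsetneq B^\alpha$ (for instance, by truncating $B^\alpha$ in each $n_i$ direction so that $\sup_{w \in B^\alpha_\ve} |\ip{w, n_i}| < \delta(\alpha)$ and then re-symmetrizing and convexifying), verify that $B^\alpha_\ve$ still satisfies Assumption \ref{ass:setB} with the same $\delta(\alpha)$, conclude $\proj_x(B^\alpha_\ve) \subseteq B^\alpha_\ve$ by the argument above, and pass to the limit $\ve \downarrow 0$. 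Alternatively, under the algebraic framework of \cite[Example 2.14]{Lipshutz2016} (where $B^\alpha$ may be chosen polyhedral), one could bypass the approximation by a direct induction on the finitely many facets of $\partial B^\alpha$ that the segment $\gamma$ visits, applying Assumption \ref{ass:setB} face by face.
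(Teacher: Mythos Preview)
The paper does not give a proof of this lemma; it is quoted verbatim from \cite[Lemma~8.3]{Lipshutz2016}. Your choice of $\norm{\cdot}_B$ as the Minkowski gauge of $B^\alpha$ is the intended one, and the reduction to showing $\proj_x(B^\alpha)\subseteq B^\alpha$ is correct.

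However, your segment argument has a gap that is more basic than the threshold issue you flag. Deducing $\ip{\nu,\proj_x y - y}=0$ for every inward normal $\nu$ at $z^*=\gamma(t^*)$ does \emph{not} by itself ``rule out the exit'': a direction orthogonal to the whole normal cone lies in the lineality space of the tangent cone, but such directions can still leave a convex body immediately (a tangent line to a disk exits at the point of tangency). What makes the situation different here is that Assumption~\ref{ass:setB} holds not only at $z^*$ but on an open piece of $\partial B^\alpha$, and a correct version of your argument must exploit that neighborhood information; the single separating hyperplane you invoke at $z^*$ does not. Your proposed fixes ($B^\alpha_\ve$, polyhedral induction) target the threshold difficulty but do not address this point.

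A cleaner route avoids both problems at once. Set $w:=\proj_x y$. Then $\ip{w,n_i}=0$ for every $i\in\allN(x)$, so for the boundary point $z:=w/\norm{w}_B$ one has $|\ip{z,n_i}|=0<\delta(\alpha)$ automatically, and Assumption~\ref{ass:setB} gives $\ip{\nu,d_i(\alpha)}=0$ for every $\nu\in\nu_{B^\alpha}(z)$ and every $i\in\allN(x)$; hence $\ip{\nu,y}=\ip{\nu,w}$. Testing the inward-normal inequality $\ip{\nu,\,y/\norm{y}_B - z}\geq 0$ and using $\ip{\nu,w}=\norm{w}_B\ip{\nu,z}<0$ (strict because $0\in(B^\alpha)^\circ$ cannot lie on a supporting hyperplane) yields $\norm{w}_B\leq\norm{y}_B$ directly. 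No segment, no approximation of $B^\alpha$, and no threshold bookkeeping are needed.
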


\begin{lem}\label{lem:thetanpsiconst}
Let $2\leq n\leq N$ and suppose that Statement \ref{state:inductioncts} holds. Then Statement \ref{state:inductioncts} holds with $n+1$ in place of $n$.
\end{lem}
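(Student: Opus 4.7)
The plan is to extend the time-changes provided by the inductive hypothesis on $[0,\theta_n)$ to the interval $[0,\theta_{n+1})$, following the template of the base case proof (Lemma \ref{lem:theta_2psiconst}). If $\theta_n=\infty$ there is nothing to prove. Otherwise, first apply the inductive hypothesis to obtain time-changes $\wt\lambda_\ellindex:[0,\theta_n)\to[0,\theta_n)$ satisfying \eqref{eq:timechange1thetan} and \eqref{eq:timechange2thetan}. By Proposition \ref{prop:sp}(ii) and the continuity of $\z$, $(\z_\ellindex,\y_\ellindex)$ converges to $(\z,\y)$ uniformly on compact intervals as $\ellindex\to\infty$. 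If $\theta_n=\theta_{n+1}$ the time-changes $\wt\lambda_\ellindex$ already fulfill the conclusion; otherwise $\theta_n<\theta_{n+1}$, and I extend the time-changes to the larger interval.

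To extend, I would analyze the boundary-visit structure of $\z$ on $[\theta_n,\theta_{n+1})$. By the upper semicontinuity of $\allN(\cdot)$ (Lemma \ref{lem:allNusc}) and the definition of $\theta_{n+1}$ in \eqref{eq:thetan}, every visit of $\z$ to $\partial G$ on this interval is to a point at which at most $n$ faces meet. Condition 2' of the boundary jitter property (Definition \ref{def:jitter}) ensures that $B_\z$ has zero Lebesgue measure, so in every neighborhood of any boundary-visit time there are times at which $\z\in G^\circ$. Since $\psi\in\simple^\z(\R^J)$ has $J_\psi\cap B_\z=\emptyset$, $\psi$ is constant on any subinterval contained in $B_\z$, so Corollary \ref{cor:DPpsiconstantST} yields a product-of-derivative-projection-matrices identity for $\dm_\z(\psi)$ at the endpoints of each such subinterval. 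The uniform convergence $\z_\ellindex\to\z$, combined with conditions 1 and 3 of the jitter property (arguing in the spirit of items (a)--(i) in the proof of Lemma \ref{lem:theta_2psiconst}), then shows that for $\ellindex$ large $\z_\ellindex$ exhibits matching boundary visits at times close to those of $\z$. I would then extend $\wt\lambda_\ellindex$ piecewise-linearly to a time-change $\lambda_\ellindex:[0,\theta_{n+1})\to[0,\theta_{n+1})$ that maps the boundary-visit times of $\z$ on $[\theta_n,\theta_{n+1})$ to the corresponding times for $\z_\ellindex$; \eqref{eq:timechange1thetan} with $n+1$ then follows as in the base case, and \eqref{eq:timechange2thetan} with $n+1$ follows from the matching product-of-projections identities on each subinterval.

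The main obstacle will be that, in contrast to the base case interval $[0,\theta_2)$ where $\z$ visits only the smooth part of $\partial G$, on $[\theta_n,\theta_{n+1})$ the process $\z$ can visit intersections of up to $n$ faces, so the analysis must track both the face indices visited and the order of visits. The contraction property of the derivative projection matrices in the norm $\norm{\cdot}_B$ (Lemma \ref{lem:projxcontract}) will be crucial for controlling the cumulative effect of composing many projections across closely spaced boundary visits. The argument ultimately reduces the extension to a finite sequence of base-case-like constructions on compact subintervals of $[\theta_n,\theta_{n+1})$; the required finiteness on any $[0,T]$ with $T<\theta_{n+1}$ follows because an accumulation of distinct-face boundary visits would force a limit point at which at least $n+1$ face constraints are active, contradicting $T<\theta_{n+1}$.
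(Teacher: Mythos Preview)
Your sketch contains the right ingredients but misses the structural mechanism that makes the induction work. You propose to use the hypothesis once on $[0,\theta_n)$ and then \emph{extend piecewise-linearly on $[\theta_n,\theta_{n+1})$ by matching boundary-visit times of $\z$ to corresponding visits of $\z_\ellindex$}. This is precisely the base-case strategy, and it breaks down here: on $[\theta_n,\theta_{n+1})$ the path $\z$ visits points where up to $n$ faces meet (at the times $t_k$ defined by \eqref{eq:tkn}), and there is no reason $\z_\ellindex$ has a matching $n$-face visit nearby to which you could anchor a piecewise-linear time-change. Conditions 1 and 3 of the jitter property only guarantee that $\z_\ellindex$ hits each of the $n$ faces \emph{individually} near $t_k$, not that it hits their intersection. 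So ``matching boundary visits'' and the product-of-projections identity \eqref{eq:phiTprojphiS} cannot be made to line up across $t_k$ in the way they do in Lemma~\ref{lem:theta_2psiconst}.

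The paper resolves this not by a base-case-like construction but by \emph{repeated} application of the induction hypothesis via time-shift. It partitions $[0,\theta_{n+1})$ at the successive $n$-face times $t_1=\theta_n<t_2<\cdots$, picks for each $k$ a point $\xi_k\in(t_k,\rho_k)$ with $\z(\xi_k)\in G^\circ$, and applies Lemma~\ref{lem:sptimeshift} and Lemma~\ref{lem:dptimeshift} so that the shifted process $\z^k=\z(\xi_k+\cdot)$ has \emph{its own} $\theta_n^k=t_{k+1}-\xi_k$; the induction hypothesis then furnishes time-changes on each $[\xi_k,t_{k+1})$. The delicate part is bridging the gap $[t_k,\xi_k]$: here the continuity of $\dm_\z(\psi)$ at $t_k$ (Proposition~\ref{prop:dp}, since $|\allN(\z(t_k))|\geq 2$) together with the contraction bound of Lemma~\ref{lem:projxcontract} shows that $\dm_{\z_\ellindex}(\psi)(\lambda_\ellindex(t))$ stays close to $\dm_\z(\psi)(t_k)$ even though the projection matrices along $\z_\ellindex$ need not match those along $\z$. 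Your proposal invokes the contraction lemma but attaches it to the wrong scaffolding; the induction hypothesis must be used on every subinterval $[\xi_k,t_{k+1})$, not just once on $[0,\theta_n)$.
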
 

\begin{proof}
Let $\x$, $(\z,\y)$, $\{\x_\ellindex\}_{\ellindex\in\N}$, $\{(\z_\ellindex,\y_\ellindex)\}_{\ellindex\in\N}$ and $\psi$ be as in Statement \ref{state:inductioncts}. If $\theta_{n+1}=\theta_n$ then \eqref{eq:timechange1thetan} and \eqref{eq:timechange2thetan} hold by assumption. For the remainder of the proof we assume that $\theta_{n+1}>\theta_n$. We split the proof into four parts (whose titles are indicated using italic font). The first two parts are devoted to defining the time-changes and the latter two parts are devoted to proving convergence results for the time-changes and the time-changed paths.

\emph{Partition of the interval $[0,\theta_{n+1})$ into subintervals $\{[t_k,t_{k+1})\}_{1\leq k<K+1}$}: Set $t_0\doteq0$, $t_1\doteq\theta_n$ and for $k\geq1$ such that $t_k<\theta_{n+1}$, define $\rho_k$ to be the first time after $t_k$ that $\z$ hits a face that does not lie in the subset of faces that $\z(t_k)$ lies in; that is,
	\be\label{eq:rhokn}\rho_k\doteq\inf\{t>t_k:\allN(\z(t))\not\subseteq\allN(\z(t_k))\}\ee
and let $t_k$ be the first time after $\rho_k$ that $\z$ lies at the intersection of $n$ or more faces; that is,
	\be\label{eq:tkn}t_{k+1}\doteq\inf\{t\geq\rho_k:|\allN(\z(t))|\geq n\}.\ee
If $t_k=\theta_{n+1}$ for some $k\in\N$, then set $K\doteq k$ and end the sequence. Alternatively, if $t_k<\theta_{n+1}$ for all $k\in\N$, define $K\doteq\infty$. In this case, we claim that $t_k\to\theta_{n+1}$ as $k\to\infty$. The claim clearly holds if $t_k\to\infty$ as $k\to\infty$; and on the other hand, if $t_k$ is uniformly bounded for all $k\in\N$, then there must be an accumulation point and it follows from \eqref{eq:rhokn}--\eqref{eq:tkn} that the accumulation point must be $\theta_{n+1}$. In either case, the following holds:
	\be\label{eq:a}\text{Given $T<\theta_{n+1}$, there exists $1\leq k<K+1$ such that $T<t_k$.}\ee
	
\emph{Definition of the time-changes $\lambda_\ellindex:[0,\theta_{n+1})\mapsto[0,\theta_{n+1})$}: We use the induction hypothesis and time-shift properties of the ESP (Lemma \ref{lem:sptimeshift}) and derivative problem to define the time-changes $\{\lambda_\ellindex\}_{\ellindex\in\N}$ on intervals of the form $[t_k,t_{k+1})$ for $0\leq k<K$. We begin with the interval $[0,t_1)$. Since $t_1=\theta_n$ by definition and Statement \ref{state:inductioncts} holds by the induction hypothesis, there is a sequence $\{\lambda_\ellindex^1\}_{\ellindex\in\N}$ of time-changes mapping $[0,t_1)$ to $[0,t_1)$ such that for all $T<\infty$,
	\be\label{eq:b}\lim_{\ellindex\to\infty}\sup_{t\in[0,t_1\wedge T)}|\lambda_\ellindex^1(t)-t|=0,\ee
and for all $T<t_1$, 
	\be\label{eq:c}\lim_{\ellindex\to\infty}\sup_{t\in[0,T]}\left|\dm_{\z_\ellindex}(\psi)(\lambda_\ellindex^1(t))-\dm_\z(\psi)(t)\right|=0.\ee
For each $\ellindex\in\N$, define $\lambda_\ellindex$ on the interval $[0,t_1)$ by 
	\be\lambda_\ellindex(t)\doteq\lambda_\ellindex^1(t)\qquad\text{for }t\in[0,t_1).\ee
Now suppose $1\leq k<K$. Since $h$ spends zero Lebesgue measure on the boundary $\partial G$ due to condition 2 of the boundary jitter property (Definition \ref{def:jitter}), it follows from the definition of $\rho_k$ in \eqref{eq:rhokn} and the upper semicontinuity of $\allN(\cdot)$ shown in Lemma \ref{lem:allNusc} that there exists
	\be\label{eq:tkxikrhok}t_k<\xi_k<\rho_k\ee 
such that
\begin{align}\label{eq:zGcircxikrhok}
	\z(\xi_k)&\in G^\circ\qquad\text{and}\qquad|\allN(\z(t))|<n\text{ for all }t\in[\xi_k,\rho_k).
\end{align} 
Since $\z(\rho_k)\in\partial G$ by \eqref{eq:rhokn}, the definition of $\simple^\z(\R^J)$ in \eqref{eq:simplez} implies that $\psi$ is constant in a neighborhood of $\rho_k$. In particular, by choosing $\xi_k\in(t_k,\rho_k)$ possibly larger, we can assume that 
	\be\label{eq:psiconstantxikrhok}\psi\text{ is constant on }[\xi_k,\rho_k].\ee 
Define 
\begin{align}\label{eq:zkykxk}
	\z^k(\cdot)&\doteq\z(\xi_k+\cdot),\qquad \y^k(\cdot)\doteq\y(\xi_k+\cdot)-\y(\xi_k),\qquad\x^k(\cdot)\doteq\z(\xi_k)+\x(\xi_k+\cdot)-\x(\xi_k).
\end{align}
By the time-shift property of the ESP, $(\z^k,\y^k)$ is a solution to the ESP for $\x^k$. In addition, since $(\z,\y)$ satisfies the boundary jitter property and $\z(\xi_k)\in G^\circ$ by \eqref{eq:zGcircxikrhok}, it follows from \eqref{eq:zs} that 
	\be\label{eq:d}(\z^k,\y^k)\text{ satisfies the boundary jitter property}.\ee
Finally, by \eqref{eq:zs}, \eqref{eq:zGcircxikrhok} and \eqref{eq:tkn},
\begin{align} \label{eq:thetanxik}
	\theta_n^k&\doteq\inf\{t\geq0:|\allN(\z^{\xi_k}(t))|\geq n\}\\ \notag
	&=\inf\{t\geq\xi_k:|\allN(\z(t))|\geq n\}-\xi_k\\ \notag
	&=t_{k+1}-\xi_k. 
\end{align}
For each $\ellindex\in\N$, define $\z_\ellindex^k$, $\y_\ellindex^k$ and $\x_\ellindex^k$
\begin{align}\label{eq:zlkylkxlk}
	\z_\ellindex^k(\cdot)&\doteq\z_\ellindex(\xi_k+\cdot),\qquad \y_\ellindex^k(\cdot)\doteq\y_\ellindex(\xi_k+\cdot)-\y_\ellindex(\xi_k),\qquad\x_\ellindex^k(\cdot)\doteq\z_\ellindex(\xi_k)+\x_\ellindex(\xi_k+\cdot)-\x_\ellindex(\xi_k).
\end{align}
Again, by the time-shift property of the ESP, $(\z_\ellindex^k,\y_\ellindex^k)$ is a solution to the ESP for $\x_\ellindex^k$. Since $\x_\ellindex$ converges to $\x$ in uniformly on compact intervals as $\ellindex\to\infty$, it follows that
	\be\label{eq:e}\x_\ellindex^k\text{ converges to $\x^k$ uniformly on compact intervals as }\ellindex\to\infty.\ee
Therefore, by the Lipschitz continuity of the ESM (Proposition \ref{prop:sp}(ii)),
	\be\label{eq:f}(\z_\ellindex^k,\y_\ellindex^k)\text{ converges to $(\z^k,\y^k)$ uniformly on compact intervals as }\ellindex\to\infty.\ee
Define 
	\be\label{eq:psixik}\psi^k(\cdot)\doteq\dm_\z(\psi)(\xi_k)+\psi(\xi_k+\cdot)-\psi(\xi_k).\ee
By the time-shift property of the derivative problem (Lemma \ref{lem:dptimeshift}),
	\be\label{eq:psikdmshift}\dm_{\z^k}(\psi^k)(\cdot)=\dm_\z(\psi)(\xi_k+\cdot).\ee
The definitions of $\psi^k$ and $\z^k$, the fact that $\z(\xi_k)\in G^\circ$ so $\hyper_{\z(\xi_k)}=\R^J$, the fact that $\psi\in\simple^\z(\R^J)$ and \eqref{eq:simplez} imply that
	\be\label{eq:g}\psi^k\in\simple^{\z^k}(\R^J).\ee
By \eqref{eq:d}, \eqref{eq:e}, \eqref{eq:g} and the fact that Statement \ref{state:inductioncts} holds by assumption (with $\z^k$, $\y^k$, $\x^k$, $\z_\ellindex^k$, $\y_\ellindex^k$, $\x_\ellindex^k$, $\psi^k$ and $\theta_n^k$ in place of $\z$, $\y$, $\x$, $\z_\ellindex$, $\y_\ellindex$, $\x_\ellindex$, $\psi$ and $\theta_n$, respectively), there is a sequence $\{\lambda_\ellindex^k\}_{\ellindex\in\N}$ of time-changes mapping $[0,\theta_n^k)$ to $[0,\theta_n^k)$ such that for all $T<\infty$,
	\be\label{eq:limelllambdaell1rhok}\lim_{\ellindex\to\infty}\sup_{t\in[0,\theta_n^k\wedge T)}|\lambda_\ellindex^k(t)-t|=0,\ee
and for all $T<\theta_n^k$,
	\be\label{eq:limelllambdaell2rhok}\lim_{\ellindex\to\infty}\sup_{t\in[0,T]}\left|\dm_{\z_\ellindex^k}(\psi^k(\lambda_\ellindex^k(t))-\dm_{\z^k}(\psi^k)(t)\right|=0,\ee
For each $\ellindex\in\N$, define $\lambda_\ellindex$ on the interval $[t_k,t_{k+1})=[t_k,\xi_k+\theta_n^k)$ (due to \eqref{eq:thetanxik}) by
	\be\label{eq:lambdajtktk1}\lambda_\ellindex(t)\doteq
	\begin{cases}
		t&\text{for }t\in[t_k,\xi_k)\\
		\lambda_\ellindex^k(t-\xi_k)&\text{for }t\in[\xi_k,t_{k+1}).
	\end{cases}\ee

\emph{Convergence of the time-changed paths on intervals of the form $[0,t_k)$}: We use the principle of mathematical induction to show that for each $1\leq k<K+1$ and for all $T<\infty$,  
	\be\label{eq:timechange1tk}\lim_{\ellindex\to\infty}\sup_{t\in[0,t_k\wedge T)}|\lambda_\ellindex(t)-t|=0,\ee
and for all $T<t_k$,
	\be\label{eq:timechange2tk}\lim_{\ellindex\to\infty}\sup_{t\in[0,T]}|\dm_{\z_\ellindex}(\psi)(\lambda_\ellindex(t))-\dm_\z(\psi)(t)|=0.\ee
The base case $k=1$ follows immediately from \eqref{eq:b} and \eqref{eq:c}. Now suppose $1\leq k<K+1$ and \eqref{eq:timechange1tk} holds for all $T<\infty$ and \eqref{eq:timechange2tk} holds for all $T<t_k$. We first prove \eqref{eq:timechange1tk} holds for all $T<\infty$ with $k+1$ in place of $k$. Let $T<\infty$. We have
\begin{align}\label{eq:limjsuprhokTlambdaj}
	\sup_{t\in[0,t_{k+1}\wedge T)}|\lambda_\ellindex(t)-t|&=\sup_{t\in[0,t_k\wedge T)}|\lambda_\ellindex(t)-t|\vee\sup_{t\in[\xi_k\wedge T,t_{k+1}\wedge T)}|\lambda_\ellindex(t)-t|.
\end{align}
By \eqref{eq:lambdajtktk1} and \eqref{eq:thetanxik}, we have
\begin{align}\label{eq:limjsuprhokTlambdaj2}
	\sup_{t\in[\xi_k\wedge T,t_{k+1}\wedge T)}|\lambda_\ellindex(t)-t|&\leq\sup_{t\in[0,(\theta_n^k\wedge(T-\xi_k))\vee0)}|\lambda_\ellindex^k(t)-t|.
\end{align}
By \eqref{eq:limjsuprhokTlambdaj}, our assumption that \eqref{eq:timechange1tk} holds, \eqref{eq:limjsuprhokTlambdaj2} and \eqref{eq:limelllambdaell1rhok}, we see that \eqref{eq:timechange1tk} holds with $k+1$ in place of $k$. 

Next, we prove \eqref{eq:timechange2tk} holds for all $T<t_{k+1}$. If $T<t_k$, then \eqref{eq:timechange2tk} follows by assumption. Let $t_k\leq T<t_{k+1}$. We have
\begin{align}\label{eq:supdmhlpsidmhpsi}
	\sup_{t\in[0,T]}\left|\dm_{\z_\ellindex}(\psi)(\lambda_\ellindex(t))-\dm_\z(\psi)(t)\right|&\leq\sup_{t\in[0,\xi_k\wedge T]}\left|\dm_{\z_\ellindex}(\psi)(\lambda_\ellindex(t))-\dm_\z(\psi)(t)\right|\\ \notag
	&\qquad+\sup_{t\in[\xi_k\wedge T,T]}|\dm_{\z_\ellindex}(\psi)(\lambda_\ellindex(t))-\dm_\z(\psi)(t)|.
\end{align}
We first show that
	\be\label{eq:limjnormxik}\lim_{\ellindex\to\infty}\sup_{t\in[0,\xi_k\wedge T]}\left|\dm_{\z_\ellindex}(\psi)(\lambda_\ellindex(t))-\dm_\z(\psi)(t)\right|=0.\ee
Since $|\allN(\z(t_k))|\geq2$ by \eqref{eq:tkn}, Proposition \ref{prop:dp} implies that $\dm_\z(\psi)(\cdot)$ is continuous at $t_k$. Let $\ve>0$. By \eqref{eq:tkxikrhok} and the continuity of $\dm_\z(\psi)(\cdot)$ at $t_k$, we can choose $\delta>0$ sufficiently small such that 
	\be\label{eq:tkdeltaxikrhokdelta}0<t_k-\delta<t_k+\delta<\xi_k<\rho_k-\delta\ee
and
	\be\label{eq:dmzpsidmzpsitkve}|\dm_\z(\psi)(t)-\dm_\z(\psi)(t_k)|<\ve\qquad\text{for all }t\in[t_k-\delta,t_k+\delta].\ee
Due to the upper semicontinuity of $\allN(\cdot)$ shown in Lemma \ref{lem:allNusc} and the definition of $\rho_k$ in \eqref{eq:rhokn}, we can choose $\delta>0$ possibly smaller to ensure that
	\be\label{eq:allNztallNztk}\allN(\z(t))\subseteq\allN(\z(t_k))\qquad\text{for all }t\in[t_k-\delta,\rho_k).\ee
It follows from \eqref{eq:timechange2tk} (which holds for $T<t_k$ by assumption) that
	\be\label{eq:limlsup0tkdelta}\lim_{\ellindex\to\infty}\sup_{t\in[0,t_k-\delta]}\left|\dm_{\z_\ellindex}(\psi)(\lambda_\ellindex(t))-\dm_\z(\psi)(t)\right|=0.\ee
Then by \eqref{eq:limlsup0tkdelta} and \eqref{eq:dmzpsidmzpsitkve}, we can choose $\ellindex_0\in\N$ sufficiently large such that for all $\ellindex\geq\ellindex_0$, 
\begin{align}\label{eq:dmjpsilambdajtkdelta}
	|\dm_{\z_\ellindex}(\psi)(\lambda_\ellindex(t_k-\delta))-\dm_\z(\psi)(t_k)|&\leq|\dm_{\z_\ellindex}(\psi)(\lambda_\ellindex(t_k-\delta))-\dm_\z(\psi)(t_k-\delta)|\\ \notag
	&\qquad+|\dm_\z(\psi)(t_k-\delta)-\dm_\z(\psi)(t_k)|\\ \notag
	&<2\ve.
\end{align}
By \eqref{eq:f}, the fact that \eqref{eq:timechange1tk} holds with $k+1$ in place of $k$ and the relation $\rho_k<t_{k+1}$, we see that 
	$$\lim_{\ellindex\to\infty}\sup_{t\in[0,\rho_k-\delta]}|\z_\ellindex(\lambda_\ellindex(t))-\z(t)|=0.$$
Then by the last display, \eqref{eq:allNztallNztk} and the upper semicontinuity of $\allN(\cdot)$, we can choose $\ellindex_0\in\N$ possibly larger such that for all $\ellindex\geq\ellindex_0$,
	\be\label{eq:allNzelltallNztk}\allN(\z_\ellindex(\lambda_\ellindex(t)))\subseteq\allN(\z(t_k))\qquad\text{for all }t\in[t_k-\delta,\rho_k-\delta].\ee
Let $t\in[t_k-\delta,\xi_k]$ be arbitrary. Since $\psi$ is constant on $[t_k-\delta,\xi_k]$, according to Corollary \ref{cor:DPpsiconstantST}, there exists $m\in\N$ and a sequence $\lambda_{\ellindex}(t_k-\delta)<s_1<\cdots<s_m=\lambda_{\ellindex}(t)$ such that for all $\ellindex\geq\ellindex_0$,
	\be\label{eq:dmhelllambdalt}\dm_{\z_\ellindex}(\psi)(\lambda_\ellindex(t))=\proj_{\z_\ellindex(s_m)}\cdots\proj_{\z_\ellindex(s_1)}[\dm_{\z_\ellindex}(\psi)(\lambda_\ellindex(t_k-\delta))].\ee
In addition, since $\dm_\z(\psi)(t_k)\in\hyper_{\z(t_k)}$ (by condition 2 of Definition \ref{def:dp}) and \eqref{eq:allNzelltallNztk} holds, it follows from the uniqueness of the derivative projection matrices $\proj_x$ stated in Lemma \ref{lem:projx} that 
	$$\proj_{\z_\ellindex(s_j)}\dm_\z(\psi)(t_k)=\dm_{\z}(\psi)(t_k),\qquad 1\leq j\leq m.$$ 
By \eqref{eq:dmhelllambdalt}, the last display and the fact that $\proj_{{\z_\ellindex}(s_1)},\dots,\proj_{{\z_\ellindex}(s_m)}$ are linear operators, we have, for all $\ellindex\geq\ellindex_0$,
\begin{align*}
	\dm_{\z_\ellindex}(\psi)(\lambda_\ellindex(t))-\dm_\z(\psi)(t_k)&=\proj_{\z_\ellindex(s_m)}\cdots\proj_{\z_\ellindex(s_1)}[\dm_{\z_\ellindex}(\psi)(\lambda_\ellindex(t_k-\delta))-\dm_\z(\psi)(t_k)].
\end{align*}
Let $\norm{\cdot}_B$ be the norm on $\R^J$ introduced in Lemma \ref{lem:projxcontract}. Then by the last display, Lemma \ref{lem:projxcontract}, \eqref{eq:dmzpsidmzpsitkve} and \eqref{eq:dmjpsilambdajtkdelta}, for all $\ellindex\geq\ellindex_0$,
\begin{align*}
	\norm{\dm_{\z_\ellindex}(\psi)(\lambda_\ellindex(t))-\dm_\z(\psi)(t)}_B&\leq\norm{\dm_{\z_\ellindex}(\psi)(\lambda_\ellindex(t))-\dm_\z(\psi)(t_k)}_B+\norm{\dm_\z(\psi)(t)-\dm_\z(\psi)(t_k)}_B\\
	&\leq\norm{\proj_{\z_\ellindex(s_m)}\cdots\proj_{\z_\ellindex(s_1)}[\dm_{\z_\ellindex}(\psi)(\lambda_\ellindex(t_k-\delta))-\dm_\z(\psi)(t_k)]}_B\\
	&\qquad+C_B|\dm_\z(\psi)(t)-\dm_\z(\psi)(t_k)|\\
	&\leq\norm{\dm_{\z_\ellindex}(\psi)(\lambda_\ellindex(t_k-\delta))-\dm_\z(\psi)(t_k)}_B+C_B\ve\\\
	&<3C_B\ve.
\end{align*}
Since $t\in[t_k-\delta,\xi_k]$ was arbitrary, for all $\ellindex\geq\ellindex_0$,
	$$\sup_{t\in[t_k-\delta,\xi_k]}\norm{\dm_{\z_\ellindex}(\psi)(\lambda_\ellindex(t))-\dm_\z(\psi)(t)}_B\leq 3C_B\ve.$$
Since norms on $\R^J$ are equivalent and $\ve>0$ was arbitrary, we have
	\be\label{eq:limlsuptkdeltaxik}\lim_{\ellindex\to\infty}\sup_{t\in[t_k-\delta,\xi_k]}|\dm_{\z_\ellindex}(\psi)(\lambda_\ellindex(t))-\dm_\z(\psi)(t)|=0.\ee
Along with \eqref{eq:limlsup0tkdelta}, \eqref{eq:limlsuptkdeltaxik} implies that \eqref{eq:limjnormxik} holds.
	
Next, we show that
	\be\label{eq:limlxiTTdmhldmh}\lim_{\ellindex\to\infty}\sup_{t\in[\xi_k\wedge T,T]}\left|\dm_{\z_\ellindex}(\psi)(\lambda_\ellindex(t))-\dm_\z(\psi)(t)\right|=0.\ee
Define $\z^k$, $\y^k$, $\x^k$ and in \eqref{eq:zkykxk}, $\z_\ellindex^k$, $\y_\ellindex^k$ and $\x_\ellindex^k$ as in \eqref{eq:zlkylkxlk} and $\psi^k$ as in \eqref{eq:psixik}. For each $\ellindex\in\N$, define $\psi_\ellindex^k\in\simple(\R^J)$ by
	\be\label{eq:psiellxikt}\psi_\ellindex^k(\cdot)\doteq\dm_{\z_\ellindex}(\psi)(\xi_k)+\psi(\xi_k+\cdot)-\psi(\xi_k).\ee
Then by the time-shift property of the derivative problem (Lemma \ref{lem:dptimeshift}), 
	\be\label{eq:psiellkdmshift}\dm_{\z_\ellindex^k}(\psi_\ellindex^k)(\cdot)=\dm_{\z_\ellindex}(\psi)(\xi_k+\cdot).\ee
Thus, by \eqref{eq:psiellkdmshift}, \eqref{eq:psikdmshift} and the triangle inequality,
\begin{align}\label{eq:dmzlpsilambdaldmzpsi}
	\sup_{t\in[\xi_k\wedge T,T]}\left|\dm_{\z_\ellindex}(\psi)(\lambda_\ellindex(t))-\dm_\z(\psi)(t)\right|&=\sup_{t\in[0,(T-\xi_k)\vee0]}\left|\dm_{\z_\ellindex^k}(\psi_\ellindex^k)(\lambda_\ellindex^k(t))-\dm_{\z^k}(\psi^k)(t)\right|\\ \notag
	&\leq\sup_{t\in[0,(T-\xi_k)\vee0]}\left|\dm_{\z_\ellindex^k}(\psi_\ellindex^k)(\lambda_\ellindex^k(t))-\dm_{\z_\ellindex^k}(\psi^k)(\lambda_\ellindex^k(t))\right|\\ \notag
	&\qquad+\sup_{t\in[0,(T-\xi_k)\vee0]}\left|\dm_{\z_\ellindex^k}(\psi^k)(\lambda_\ellindex^k(t))-\dm_{\z^k}(\psi^k)(t)\right|.
\end{align}
By the Lipschitz continuity of the derivative map (Proposition \ref{prop:dmlip}), the definition of $\psi_\ellindex^k$ in \eqref{eq:psiellxikt}, the definition of $\psi^k$ in \eqref{eq:psixik}, we have
\begin{align*}
	\sup_{t\in[0,(T-\xi_k)\vee0]}\left|\dm_{\z_\ellindex^k}(\psi_\ellindex^k)(\lambda_\ellindex^k(t))-\dm_{\z_\ellindex^k}(\psi^k)(\lambda_\ellindex^k(t))\right|&\leq\lip_{\dm}(\alpha)\sup_{t\in[0,\lambda_\ellindex^k((T-\xi_k)\vee0)]}\left|\psi_\ellindex^k(t)-\psi^k(t)\right|\\
	&\leq\lip_{\dm}(\alpha)|\dm_{\z_\ellindex}(\psi)(\xi_k)-\dm_\z(\psi)(\xi_k)|.
\end{align*}
Then by \eqref{eq:limjnormxik} and the fact that $\lambda_{\ellindex}(\xi_k)=\xi_k$ due to \eqref{eq:lambdajtktk1}, letting $\ellindex\to\infty$ yields
\begin{align}\label{eq:psiellxikpsixik}
	\lim_{\ellindex\to\infty}\sup_{t\in[0,(T-\xi_k)\vee0]}\left|\dm_{\z_\ellindex^k}(\psi_\ellindex^k)(\lambda_\ellindex^k(t))-\dm_{\z_\ellindex^k}(\psi^k)(\lambda_\ellindex^k(t))\right|=0.
\end{align}
Since $(T-\xi_k)\vee0<\theta_n^k$, it follows from \eqref{eq:limelllambdaell2rhok} that
\begin{align}\label{eq:dmzlkpsiklambdalkdmzkpsik}
	\lim_{\ellindex\to\infty}\sup_{t\in[0,(T-\xi_k)\vee0]}\left|\dm_{\z_\ellindex^k}(\psi^k)(\lambda_\ellindex^k(t))-\dm_{\z^k}(\psi^k)(t)\right|&=0.
\end{align}
Together, \eqref{eq:dmzlpsilambdaldmzpsi}, \eqref{eq:psiellxikpsixik} and \eqref{eq:dmzlkpsiklambdalkdmzkpsik} imply \eqref{eq:limlxiTTdmhldmh} holds. Since \eqref{eq:limjnormxik} and \eqref{eq:limlxiTTdmhldmh} both hold, we see that \eqref{eq:timechange2tk} holds with $k+1$ in place of $k$. With the induction step established, the principle of mathematical induction implies that for each $1\leq k<K+1$, \eqref{eq:timechange1tk} holds for all $T<\infty$ and \eqref{eq:timechange2tk} holds for all $T<t_k$.
	
\emph{Convergence of the time-changed paths on $[0,\theta_{n+1})$}: We first prove \eqref{eq:timechange1thetan} holds for all $T<\infty$ with $\theta_{n+1}$ in place of $n$. Let $T<\infty$. Suppose $T<\theta_{n+1}$. Then there exists $1\leq k<K+1$ such that $T<t_k$, so by \eqref{eq:timechange1tk},
	\be\label{eq:limjsup0Tsup0tk1}\lim_{\ellindex\to\infty}\sup_{t\in[0,T)}|\lambda_\ellindex(t)-t|\leq\lim_{\ellindex\to\infty}\sup_{t\in[0,t_k)}|\lambda_\ellindex(t)-t|=0.\ee
Now suppose $T\geq\theta_{n+1}$. Let $\ve\in(0,\theta_n)$ be arbitrary. Since \eqref{eq:limjsup0Tsup0tk1} holds for all $T<\theta_{n+1}$, we have
	\be\label{eq:limjsupthetandelta}\lim_{\ellindex\to\infty}\sup_{t\in[0,\theta_{n+1}-\ve)}|\lambda_\ellindex(t)-t|=0.\ee
Thus, we are left to show that
	\be\label{eq:limellsupthetanvethetan}\lim_{\ve\downarrow0}\lim_{\ellindex\to\infty}\sup_{t\in[\theta_{n+1}-\ve,\theta_{n+1})}|\lambda_\ellindex(t)-t|=0.\ee
By the triangle inequality and the fact that $\lambda_\ellindex$ is nondecreasing, for all $\ve\in(0,\theta_{n+1})$ and $t\in[\theta_{n+1}-\ve,\theta_{n+1})$,
	\be\label{eq:lambdaellminust}|\lambda_\ellindex(t)-t|\leq|\lambda_\ellindex(t)-\theta_{n+1}|+|\theta_{n+1}-t|\leq|\lambda_\ellindex(\theta_{n+1}-\ve)-\theta_{n+1}|+\ve.\ee
By \eqref{eq:limjsupthetandelta} and the continuity of $\lambda_\ellindex$, $\lambda_\ellindex(\theta_{n+1}-\ve)\to\theta_{n+1}-\ve$ as $\ellindex\to\infty$. Thus, \eqref{eq:limellsupthetanvethetan} holds, which implies \eqref{eq:timechange1thetan} holds with $\theta_{n+1}$ in place of $\theta_n$. We are left with the final step of proving \eqref{eq:timechange2thetan} holds for all $T<\theta_{n+1}$. Let $T<\theta_{n+1}$. By \eqref{eq:a} there exists $1\leq k<K+1$ such that $T<t_k$. It then follows from \eqref{eq:timechange2tk} that \eqref{eq:timechange2thetan} holds.
\end{proof}

\begin{proof}[Proof of Lemma \ref{lem:psisimple}]
By Lemma \ref{lem:theta_2psiconst}, Lemma \ref{lem:thetanpsiconst} and the principle of mathematical induction, Statement \ref{state:inductioncts} holds for $n=2,\dots,N+1$. Since $\theta_{N+1}=\infty$ by definition, the lemma follows. 
\end{proof}

\begin{flushleft}
{\bf Acknowledgments}. We thank Mike Giles for useful feedback on a preliminary draft of this paper.
\end{flushleft}

\bibliographystyle{plain}
\bibliography{montecarlo}

\end{document}